 \newtheorem{thm}{Theorem}[section]
 \newtheorem{lem}[thm]{Lemma}
 \newtheorem{prop}[thm]{Proposition}
 \theoremstyle{definition}
 \theoremstyle{remark}
 \newtheorem{rem}[thm]{Remark}
  \newtheorem{claim}{Remark}
 \numberwithin{equation}{section}
\begin{document}

\title[An existence result for a fractional $p$-Laplacian problem]
{A fractional 
\protect\boldmath${p}$-Laplacian problem 
with multiple critical Hardy-Sobolev nonlinearities}

\author[Assun\c{c}\~{a}o]{Ronaldo B\@. Assun\c{c}\~{a}o}
\address{Departamento de Matem\'{a}tica\,---\,
Universidade Federal de Minas Gerais, 
UFMG \\
Av.~Ant\^{o}nio Carlos, 6627\,---
CEP 30161-970\,---\,Belo Horizonte, MG, Brasil}
\email{ronaldo@mat.ufmg.br}

\author[Miyagaki]{Ol\'{\i}mpio H. Miyagaki}
\thanks{Ol\'{\i}mpio H. Miyagaki was partially supported by 
CNPq/Brasil and INCTMAT/Brasil.}
\address{Departamento de Matem\'{a}tica\,---\,
Universidade Federal de Juiz de Fora, 
UFJF \\
Cidade Universit\'{a}ria\,---\,
CEP 36036-330\,---\,Juiz de Fora, MG, Brasil} 
\email{ohmiyagaki@gmail.com}

\author[Silva]{Jeferson C. Silva}
\address{Departamento de Matem\'{a}tica\,---\,
Universidade Federal de Minas Gerais, 
UFMG \\
Av.~Ant\^{o}nio Carlos, 6627\,---
CEP 30161-970\,---\,Belo Horizonte, MG, Brasil}
\email{jefersoncs@ufmg.br}
\thanks{J. C. Silva have received resarch grants from FAPEMIG/MG and CAPES/Brazil.}
\thanks{Corresponding author: Jeferson C. Silva.}

\subjclass{%
Primary:   %
35J20,     
35J92.     
Secondary: %
35J10,     
35B09,     
35B38,     
35B45.     
}

\keywords{%
Fractional elliptic equations, 
$p$-Laplacian operator,
variational methods, 
multiple nonlinearities.}

\date{\today}

\begin{abstract}
In this work, we study the existence of weak solution to the following quasi linear elliptic problem involving the fractional $p$-Laplacian operator, a Hardy potential and multiple critical Sobolev nonlinearities with singularities,
\begin{align*}
(-\Delta_p)^su - \mu \dfrac{\vert u \vert^{p-2} u}{\vert x \vert^{ps}} = \dfrac{\vert u\vert^{p^*_s(\beta)-2}u}{\vert x \vert^{\beta}} + \dfrac{\vert u\vert^{p^*_s(\alpha )-2} u}{\vert x \vert^\alpha }, 
\end{align*}
where
$ x \in \mathbb{R}^N$,
$u\in D^{s,p}(\mathbb{R}^N)$,
$0<s<1$, 
$1<p<+\infty$, 
$N>sp$, 
$0<\alpha<sp$,
$0<\beta<sp$, $\beta\neq\alpha$,
$\mu < \mu_H
 \coloneqq \inf_{u \in D^{s,p}(\mathbb{R}^N) \backslash \{ 0 \}}
[u]_{s,p}^p / \vert\vert u \vert\vert_{s,p}^p > 0$.
To prove the existence of solution to the problem we have to formulate a refined version of the concentration-compactness principle and, as an independent result, we have to show that the extremals for the Sobolev inequality are attained.
\end{abstract}

\maketitle

\section{Introduction and main result}

The fractional $p$-Laplacian operator is a non-linear and non-local operator defined for differentiable functions 
$u \colon \mathbb{R}^N \to \mathbb{R} $ by
\begin{align}
\label{def:plapfrac}
(-\Delta_p)^su(x):=2\lim_{\varepsilon\rightarrow 0^+}\int_{\mathbb{R}^N\setminus B_\varepsilon (x)}\dfrac{\vert u(x)-u(y)\vert^{p-2}(u(x)-u(y))}{\vert x-y\vert^{N+sp}}\,dy,
\end{align}
where
$ x\in \mathbb{R}^N$,
$p \in (1,+\infty)$, $s \in (0,1)$
and
$N > sp$.
The definition~\eqref{def:plapfrac} is consistent, up to a normalization constant dependent only on $N$ and on $s$, 
with the usual definiton of the linear fractional Laplacian operator when $p=2$. In this special case it is simply denoted by $(- \Delta)^s$ and is defined by
\begin{align}
\label{def:2lapfrac}
(- \Delta)^s u(x) & := C(N,s) \,\operatorname{p.v.} \int_{\mathbb{R}^N}
\frac{u(x) - u(y)}{|x-y|^{N+2s}} \, dy,
\end{align}
where 
$\operatorname{p.v.}$ stands for the Cauchy's principal value and the normalization constant is given by
\begin{align*}
C(N,s) & := 2^{2s-1} \pi^{\frac{n}{2}}\,
\frac{\Gamma(\frac{N+2s}{2})}{\Gamma(-s)}.
\end{align*}
In this way it is valid the identity
\begin{align*}
(-\Delta)^s u & = \mathcal{F}^{-1} \left( |\xi|^{2s} \left( \mathcal{F}u \right) \right),
\end{align*}
where $\xi \in \mathbb{R}^N$,
$u \in \mathcal{S}(\mathbb{R}^N)$,
the class of Schwartz differentiable functions with repid decay,
and 
\begin{align*}
\mathcal{F}u(\xi) & = \int_{\mathbb{R}^N} \exp(-2\pi i x \cdot \xi) u(x) \, dx
\end{align*}
denotes the Fourier transform of $u$.

For more basic informations about the fractional $p$-Laplacian operator we cite the article by Di Nezza, Palatucci and Valdinoci~\cite{MR2944369} and the book by 
Molica Bisci, R\u{a}dulescu, Servadei~\cite{MR3445279},
as well as the references therein; 
for some motivations in physics, chemistry, and economy
that lead to the study of this kind of operator we mention the article by Caffarelli~\cite{MR2354493}.

Non-local problems involving the fractional $p$-Laplacian operator ~$(- \Delta_p)^s$ have received the attention of several authors in the last decade, mainly in the case $p=2$
and in the cases where the nonlinearities have pure polynomial growth involving subcritical exponents (in the sense of the Sobolev embeddings). For example, this operator leads naturally to the class of quasi linear problems
\begin{align}
\label{eq:probquaselin}
\begin{cases}
(- \Delta)_p^s u = f(x,u) 
& x \in \Omega \subset \mathbb{R}^N\\
u=0 & x \in \mathbb{R}^N\setminus \Omega,
\end{cases}
\end{align}
where $\Omega \subset \mathbb{R}^N$ is a domain. 
Nowadays there exists an extensive and ever growing literature about the class of quasi linear 
problems~\eqref{eq:probquaselin} in the case where $\Omega$ is bounded and with Lipschitz boundary. In particular,
we cite 
Franzina and Palatucci~\cite{MR3307955} 
and 
Lindgren and Lindqvist~\cite{MR3148135}
for problems involving $p$-eigenvalues; 
Di Castro, Kuusi and Palatucci~\cite{MR3542614} 
and
Iannizzotto, Mosconi and Squassina~\cite{MR3593528,MR3470672}
for regularity theory;
Iannizzotto, Liu, Perera, and 
Squassina~\cite{MR3483598},
Molica Bisci, R\u{a}dulescu, and Servadei~\cite{MR3445279}
and Servadei and Valdinoci~\cite{MR2879266} 
for the theory of existence of solutions in the case of 
nonlinearities with pure polynomial growth involving
subcritical exponents;
Alves and Miyagaki~\cite{MR3494494},
Fiscella, Molica Bisci and Servadei~\cite{MR3446947},
Servadei and Valdinoci~\cite{MR3379042}
for the theory of existence of solutions in the case of nonlinearities with pure polynomial growth involving critical exponents.
Moreover, great attention has been given to the study of existence of solutions to
nonlocal problems with the Hardy potential
and also with other types of nonlinearities; for these cases, we cite
Abdellaoui, Peral and Primo~\cite{MR3186917},
Barrios, Medina and Peral~\cite{MR3231059},
Cotsiolis and Tavoularis~\cite{MR2064421}
and Yang and Wu~\cite{MR3709036}, as well as the references therein.

In what follows, we mention an interesting class of quasi linear elliptic problems in the general class of problems~\eqref{eq:probquaselin}; 
more precisely, we consider problems with multiple critical nonlinearities in the sense of the Sobolev embeddings and also a nonlinearity of the Hardy type, which consistently appears on the side of the nonlocal operator. 
Fillippucci, Pucci and Robert~\cite{MR2498753} 
considered the quasi linear elliptic problem
\begin{align}
\label{eq:fpr}
-\Delta_pu-\mu\dfrac{u^{p-1}}{\vert x\vert^p}=u^{p^*-1}+\dfrac{u^{p^*(\alpha)-1}}{\vert x\vert^\alpha} \qquad (x\in\mathbb{R}^N),
\end{align}
where $\Delta_pu=\operatorname{div} (\vert\nabla u\vert^{p-2}\nabla u)$ is the $p$-Laplacian operator, 
$N\geqslant 2$ is an integer, 
$p\in (1,N)$ 
$\alpha \in (0,p)$,
$p^*(\alpha)=[p(N-\alpha)](N-p)$, 
$p^* = p^*(0) = Np/(N-p)$
and
$\mu$ is a real parameter.
The combination of two nonlinearities leads to some serious difficulties and subtlities to problem~\eqref{eq:fpr}. 
When only one nonlinearity appears with critical exponent, several results about the existence of weak solutions are already known. In general, these weak solutions are radially symmetric with respect to some point. The common strategy to obtain a solution to problem~\eqref{eq:fpr} consists in the construction of solutions as critical points of the energy functional naturally associated to this class of problems, since they have variational structure. To do this, the authors used a version of the mountain pass theorem due to Ambrosetti e Rabinowitz. However, since the problem is invariant under the action of the group of conformal transformations 
$u \mapsto u_r(x):= r^{(N-p)/p} u(rx)$,
the mountain pass theorem yields only Palais-Smale sequences
and not necessarily critical points for the energy functional. So, an important step in the proof of their existence result consists in showing a refined version of the concentration-compactness principle in order to better understand the behavior of the Palais-Smale sequences. The main difficulty is that there is an asymptotic competition between the energy carried by the two critical nonlinearities. If one of them dominates the other, then there is the anihilation of the weaker one; in this case, the limit of the Palais-Smale sequence is a weak solution of a problem involving only one critical nonlinearity. Of course, in this case we do not obtain a weak solution to problem~\eqref{eq:fpr}. Therefore, the crucial point consists in avoiding the domination of one nonlinearity over the other.

Afterwards, 
Ghoussoub and Shakerian~\cite{MR3366777}
considered the quasi linear nonlocal elliptic problem
\begin{align}
(-\Delta)^su-\mu\dfrac{u}{\vert x\vert^{2s}}=\vert u\vert^{2^*-2}u + \dfrac{\vert u\vert^{2^*_s(\beta)-2}u}{\vert x\vert^{\beta}} \qquad (x\in\mathbb{R}^N)\label{2.2}.
\end{align}
This problems generalizes the one studied by 
Filippucci, Pucci and Robert to the case of nonlocal operators; 
more specificaly, to the fractional Laplacian operator with
$p=2$. Besides the above mentioned difficulties caused by the presence of multiple critical nonlinearities,
in the case of problem~\eqref{2.2} there exist additional difficulties. To show the existence of weak solution, the authors considered an idea proposed by Caffarelli and Silvestre~\cite{MR2354493} that uses the harmonic extension of the fractional Laplacian operator to the upper half-space
$\mathbb{R}_+^{n+1}$, changing the given nonlocal problem  to a local problem with Neumann boundary condition.

Recently, Chen~\cite{MR3762809} considered the quasi linear nonlocal elliptic problem
\begin{align}
(-\Delta)^su-\mu\dfrac{u}{\vert x\vert^{2s}}=\dfrac{\vert u\vert^{2^*_s(\alpha)-2}u}{\vert x\vert^{\alpha}}+\dfrac{\vert u\vert^{2^*_s(\beta)-2}u}{\vert x\vert^{\beta}} \qquad (x\in\mathbb{R}^N)\label{2.1}.
\end{align}
This problem generalizes the problem studied by Ghoussoub and Shakerian, still in the case $p=2$ but for the case where both nonlinearities have singularities at the origin.
Again, the basic strategy used by the author to show the existence of weak, positive solution to problem~\eqref{2.1}
was the use of the harmonic extension of the fractional Laplacian proposed by Caffarelli and Silvestre~\cite{MR2354493} as well as the mountain pass theorem and the concentration-compactness principle.

Motivated by the several results above mentioned, in this work we consider the quasi linear elliptic problem involving the fractional $p$-Laplacian problem with multiple critical nonlinearities with singularities at the origin and a Hardy term,
\begin{align}
(-\Delta_p)^su-\mu\dfrac{\vert u\vert^{p-2}u}{\vert x\vert^{ps}}=\dfrac{\vert u\vert^{p_s^*(\beta)-2}u}{\vert x\vert^{\beta}}+\dfrac{\vert u\vert^{p_s^*(\alpha)-2}u}{\vert x\vert^{\alpha}} \qquad (x\in\mathbb{R}^N) \label{1.1}
\end{align}
where
$0<s<1$, 
$1<p<+\infty$, 
$N>sp$, 
$0<\alpha<sp$,
$0<\beta<sp$, $\beta\neq\alpha$,
$\mu < \mu_H$ 
(the constant $\mu_H$ is defined below)
and
$ p^*_s(\alpha)=(p(N-\alpha)/(N-ps)$;
in particular, if $\alpha =0$ then
$p_s^*(0)=p_s^*=Np/(N-p)$.

The choice of the space function where we look for the solutions to problems with variational structure such as problem~\eqref{1.1} is an important step in its study.
Let $\Omega \subset \mathbb{R}^N$ be an open, bounded subset with differentiable boundary. We consider tacitly that all the functions are Lebesgue integrable and we introduce the fractional Sobolev space
\begin{align*}
W_0^{s,p}(\Omega) 
& \coloneqq \left\{ 
u \in L_{\operatorname{loc}}^1 (\mathbb{R}^N) 
\colon [u]_{s,p} < +\infty; \; u \equiv 0 \text{ a.e. } \mathbb{R}^N \backslash \Omega\right\}
\end{align*}
and the fractional homogeneous Sobolev space
\begin{align*}
D^{s,p}(\mathbb{R}^N)
& \coloneqq \left\{ u\in L^{p^*_s}(\mathbb{R}^N) 
\colon [u]_{s,p}<\infty \right\}
\supset W_0^{s,p}(\Omega).
\end{align*}
In these definitions, the symbol
$[u]_{s,p}$ stands for the Gagliardo seminorm, defined by
\begin{align*}
u & \longmapsto [u]_{s,p}=\left(\int_{\mathbb{R}^N}\int_{\mathbb{R}^N}\dfrac{\vert u(x)-u(y)\vert^{p}}{\vert x-y\vert^{N+sp}}\,dx\,dy\right)^{1/p} \quad 
(u\in C^{\infty}_0(\mathbb{R}^N)).
\end{align*}
 
For $ 1 < p < +\infty$, 
the function spaces 
$W_0^{s,p}(\Omega)$ 
and 
$D^{s,p}(\mathbb{R}^N)$ 
are separable, reflexive Banach spaces
with respect to the Gagliardo  seminorm 
$[ \, \cdot \,]_{s,p}$.
These spaces can also be understood as the respective completions of the spaces of differentiable functions with compact support
$C_0^{\infty}(\Omega)$
and  
$C_0^{\infty}(\mathbb{R}^N)$
with respect to 
$[ \, \cdot \,]_{s,p}$; 
see, for example, 
Brasco, Mosconi and Squassina~\cite{MR3461371}.
The topological dual of the space 
$W_0^{s,p}(\Omega)$ 
is denoted by 
$W^{-s,p'}(\Omega)$ 
where $1/p + 1/p' = 1$ 
or by $(W_0^{s,p}(\Omega))'$,
with the corresponding duality product 
$\langle \,\cdot\, , \,\cdot \, \rangle \colon W^{-s,p'}(\Omega) \times W_0^{s,p}(\Omega) \to \mathbb{R}$.
Due to the reflexivity of the space, the weak convergence and the weak${}^*$ convergence in $W^{-s,p'}(\Omega)$ coincide.
Moreover, in the Sobolev space $D^{s,p}(\mathbb{R}^N)$,
the space where we look for solutions to problem~\eqref{1.1}, 
the Gagliardo seminorm 
$[\,\cdot\,]_{s,p}$ is in fact a norm and 
$(D^{s,p}(\mathbb{R}^N); [\,\cdot\,]_{s,p})$
is an uniformly convex Banach space.

The variational structure of problem~\eqref{1.1} can be established with the help of the following version of the Hardy-Sobolev inequality, which can be found in the paper by
Chen, Mosconi and Squassina~\cite{MR3861730}. 

Let $0 < s < 1$, $1 < p < +\infty$ and 
$0 \leqslant \alpha < sp < N$.  Then there exists a positive constant $C \in \mathbb{R}_{+}$ such that
\begin{align}
\label{desig:hs}
\left( 
\int_{\Omega} \frac{\vert u \vert^{p_{\alpha}^*}}{\vert x \vert^{\alpha}} \,dx \right)^{1/p_{\alpha}^*}
& \leqslant C 
\left( 
\int_{\mathbb{R}^N} \int_{\mathbb{R}^N} 
\frac{\vert u(x) - u(y) \vert^p}{\vert x - y \vert^{N + ps}} \,dx \,dy \right)^{1/p}
\end{align}
for every $u \in W_0^{s,p}(\Omega)$.
The parameter $p_s^*(\alpha)$
is the critical fractional exponent of the Hardy-Sobolev embeddings 
$D^{s,p}(\mathbb{R}^N)\hookrightarrow L^p(\mathbb{R}^N;\vert x \vert^{-sp})$
where the Lebesgue space
$L^p(\mathbb{R}^N;\vert x \vert^{-sp})$
is equipped with the norm
\begin{align*}
\vert\vert u \vert\vert_{L^p(\mathbb{R}^N;\vert x \vert^{-sp})} 
& \coloneqq
\left(\int_{\mathbb{R}^N} \frac{\vert u \vert^p}{\vert x \vert^{sp}}\,dx \right)^{1/p}.
\end{align*}
Indeed, the embeddings 
$W_0^{s,p}(\Omega) \hookrightarrow L^q(\Omega; \vert x \vert^{\alpha})$
are continuous for 
$ 0 \leqslant \alpha \leqslant ps$ and for 
$1 \leqslant q \leqslant p_s^*(\alpha)$;
and these embeddings are compact for
$1 \leqslant q < p_s^*(\alpha)$.
Moreover, the best constants of these embeddings are positive numbers, that is,
\begin{align}
\mu_H
& \coloneqq \inf_{\substack{u \in D^{s,p}(\mathbb{R}^N) \\ u \neq 0}}
\frac{[u]_{s,p}^p}{\| u \|_{L^p(\mathbb{R}^N;\vert x \vert^{-sp})}^{p}}.\label{muH}
\end{align}

The functional $u \longmapsto (1/p) [ u ]_{s,p}^{p}$ is convex and is belongs to the class 
$C^1(D^{s,p}(\Omega);\mathbb{R})$,
so that for every function
$ u \in W_0^{s,p}(\Omega)$, its subdifferential is exactly
$(-\Delta_p)^s u$, that is, 
the unique element of the topological dual space
$W^{-s,p'}(\Omega)$ such that
\begin{align*}
\left\langle (-\Delta_p)^s u, \phi \right\rangle
& = \int_{\mathbb{R}^N}
\int_{\mathbb{R}^N} \frac{J_p(u(x) - u(y)) (\phi(x) - \phi(y))}{\vert x - y \vert^{N+ps}} \,dx \,dy;\, \forall\, \phi \in W_0^{s,p}(\Omega).
\end{align*}
In the previous formula, by way of simplicity we introduced the notation:
given $1 < m < + \infty$, we define the function
$J_m \colon \mathbb{R} \to \mathbb{R}$ by
$J_m(t)=\vert t \vert^{m-2}t$.

Now we can define precisely the notion of weak solution to problem~\eqref{1.1}.  
We say that the function 
$u\in D^{s,p}(\mathbb{R}^N)$ 
is a weak solution to problem~\eqref{1.1} if
\begin{align*}
&\int_{\mathbb{R}^{N}}\int_{\mathbb{R}^{N}}
\dfrac{J_p(u(x)-u(y))(\varphi(x)-\varphi(y))}{\vert x-y\vert^{N+sp}}\,dx\,dy
-\mu\int_{\mathbb{R}^N}\dfrac{J_p(u)\varphi(x)}{\vert x\vert^{ps}}\,dx \\
& \qquad = \int_{\mathbb{R}^N}\dfrac{J_{p_s^*(\beta)}(u)\varphi(x)}{\vert x\vert^{\beta}}\,dx 
+ \int_{\mathbb{R}^N}
\dfrac{J_{p_s^*(\alpha)}(u)\varphi(x)}{\vert x\vert^{\alpha}}\,dx 
\end{align*}
for every function 
$\varphi\in D^{s,p}(\mathbb{R}^N)$.

By the notation introduced and by the results above mentioned, we see that a weak solution to problem~\eqref{1.1} corresponds to a critical point to the functional
$\Phi \colon \mathbb{R}^N \to \mathbb{R}$
defined by
\begin{align}
\label{eq:funcenergia}
\Phi(u) & \coloneqq
\frac{1}{p} [u]_{s,p}^p - \frac{\mu}{p} \int_{\mathbb{R}^N}
\frac{\vert u \vert^p}{\vert x \vert^{ps}} \, dx
- \frac{1}{p_s^*(\beta)} \int_{\mathbb{R}^N}
\frac{\vert u \vert^{p_x^*(\beta)}}{\vert x \vert^{\beta}} \, dx \nonumber \\
& \quad - \frac{1}{p_s^*(\alpha)} \int_{\mathbb{R}^N}
\frac{\vert u \vert^{p_x^*(\alpha)}}{\vert x \vert^{\alpha}} \, dx,
\end{align}
named energy functional.
In fact, for the parameters in the intervals already specified, we have 
$\Phi \in C^1(D^{s,p}(\mathbb{R}^N);\mathbb{R})$   
\begin{align*}
\left\langle \Phi'(u), \phi \right\rangle
& = \int_{\mathbb{R}^{N}}\int_{\mathbb{R}^{N}}
\dfrac{J_p(u(x)-u(y))(\varphi(x)-\varphi(y))}{\vert x-y\vert^{N+sp}}\,dx\,dy\\
&\qquad -\mu\int_{\mathbb{R}^N}\dfrac{J_p(u)\varphi(x)}{\vert x\vert^{ps}}\,dx - \int_{\mathbb{R}^N}\dfrac{J_{p_s^*(\beta)}(u)\varphi(x)}{\vert x\vert^{\beta}}\,dx \\
& \qquad - \int_{\mathbb{R}^N}\dfrac{J_{p_s^*(\alpha)}(u)\varphi(x)}{\vert x\vert^{\alpha}}\,dx,\quad \forall \varphi\in D^{s,p}(\mathbb{R}^N).
\end{align*}
In other terms, 
$u \in D^{s,p}(\mathbb{R}^N)$ 
is a weak solution to problem~\eqref{1.1} if, and only if,
$\Phi'(u) = 0$.

Now we can state our result.

\begin{thm}
Let 
$0<s<1$, 
$1<p<+\infty$, 
$N>sp$, 
$0<\alpha<sp$,
$0<\beta<sp$, $\beta\neq\alpha$,
$\mu < \mu_H$.
Then there exists a weak solution
$ u \in  D^{s,p}(\mathbb{R}^{N})$ 
to problem~\eqref{1.1}.
\label{teo1}
\end{thm}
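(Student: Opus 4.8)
The plan is to realise the weak solution as a nontrivial critical point of the energy functional $\Phi$ from~\eqref{eq:funcenergia}, obtained via the mountain pass theorem, and to restore compactness of the associated Palais--Smale sequences below a sharp energy level through a concentration--compactness analysis adapted to the non-local operator and to the two singular weights. First I would check the mountain pass geometry: since $\mu<\mu_H$, the map $u\mapsto[u]_{s,p}^p-\mu\|u\|_{L^p(\mathbb{R}^N;|x|^{-sp})}^p$ is a positive quantity equivalent to $[u]_{s,p}^p$, so combining this with the Hardy--Sobolev inequality~\eqref{desig:hs} for $\gamma\in\{\alpha,\beta\}$ and with $p<p_s^*(\alpha),p_s^*(\beta)$ one gets $\Phi(0)=0$, $\Phi\geqslant\delta>0$ on a small sphere $[u]_{s,p}=\rho$, and $\Phi(tu_0)\to-\infty$ along any ray. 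The mountain pass theorem (in its Cerami version) then yields a sequence $(u_n)\subset D^{s,p}(\mathbb{R}^N)$ with $\Phi(u_n)\to c\coloneqq\inf_{\gamma\in\Gamma}\max_{[0,1]}\Phi\circ\gamma>0$ and $\Phi'(u_n)\to0$; estimating $\Phi(u_n)-\tfrac1p\langle\Phi'(u_n),u_n\rangle$ and again using $p<p_s^*(\alpha),p_s^*(\beta)$ bounds $[u_n]_{s,p}$, hence $u_n\rightharpoonup u$ in $D^{s,p}(\mathbb{R}^N)$ up to a subsequence.

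Next I would prove a refined concentration--compactness principle for bounded sequences in $D^{s,p}(\mathbb{R}^N)$ that simultaneously tracks the Gagliardo measure, the Hardy measure $|u_n|^p|x|^{-sp}$, and the two critical measures $|u_n|^{p_s^*(\gamma)}|x|^{-\gamma}$ for $\gamma\in\{\alpha,\beta\}$. Because both weights are singular only at the origin and because $\Phi$ is invariant under the dilations $v\mapsto r^{(N-ps)/p}v(r\,\cdot)$, concentration can occur only at the origin and ``at infinity'', and the concentrated masses are linked through $\mu_H$ and the best constants
$$S_\mu(\gamma)\coloneqq\inf_{v\neq0}\frac{[v]_{s,p}^p-\mu\|v\|_{L^p(\mathbb{R}^N;|x|^{-sp})}^p}{\big(\int_{\mathbb{R}^N}|v|^{p_s^*(\gamma)}|x|^{-\gamma}\,dx\big)^{p/p_s^*(\gamma)}},\qquad\gamma\in\{\alpha,\beta\};$$
a fractional Brezis--Lieb lemma for the Gagliardo seminorm and for the weighted critical integrals is used here, and, contrary to the cited works for $p=2$, no Caffarelli--Silvestre extension is available, so this is carried out directly for the non-local functional. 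As an independent consequence --- needed below --- I would show, applying this principle to a minimising sequence and normalising the denominator on an annulus by a dilation, that each $S_\mu(\gamma)$ is attained by a nonnegative, radially non-increasing extremal $U_\gamma\in D^{s,p}(\mathbb{R}^N)$, together with the polynomial decay estimates for $U_\gamma$ that the later asymptotics require.

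The heart of the proof is then to show that the mountain pass level lies strictly below
$$c^*\coloneqq\min_{\gamma\in\{\alpha,\beta\}}\Big(\frac1p-\frac1{p_s^*(\gamma)}\Big)\,S_\mu(\gamma)^{\frac{p_s^*(\gamma)}{p_s^*(\gamma)-p}},$$
the threshold at which the concentration alternatives of the previous step are still possible. To this end I would estimate $\max_{t\geqslant0}\Phi(t\,U_\gamma^\varepsilon)$ along the rescaled extremals $U_\gamma^\varepsilon\coloneqq\varepsilon^{-(N-ps)/p}U_\gamma(\,\cdot\,/\varepsilon)$, choosing $\gamma$ to be the index realising the minimum in $c^*$: the leading term reproduces $(\tfrac1p-\tfrac1{p_s^*(\gamma)})\,S_\mu(\gamma)^{p_s^*(\gamma)/(p_s^*(\gamma)-p)}$, while the \emph{other} critical nonlinearity contributes a strictly positive term of a genuinely different order in $\varepsilon$ --- this is exactly where $\alpha\neq\beta$ is essential, since it forces the two weighted integrals $\int_{\mathbb{R}^N}|U_\gamma^\varepsilon|^{p_s^*(\delta)}|x|^{-\delta}\,dx$ to scale at different rates as $\varepsilon\to0^+$ (or $\varepsilon\to+\infty$) --- so that $\max_{t\geqslant0}\Phi(tU_\gamma^\varepsilon)<c^*$ for a suitable $\varepsilon$, and therefore $c<c^*$. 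This is precisely the step that prevents the asymptotic domination of one nonlinearity over the other.

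Finally, applying the concentration--compactness principle to the Palais--Smale sequence $(u_n)$ and using $c<c^*$ rules out concentration at the origin and at infinity, while a preliminary rescaling of $u_n$ --- allowed by the dilation invariance of $\Phi$ --- that fixes a definite amount of critical mass on an annulus rules out vanishing; hence $u_n\to u$ strongly in $D^{s,p}(\mathbb{R}^N)$ along a subsequence, $u\neq0$, and by the $C^1$ continuity of $\Phi'$ one concludes $\Phi'(u)=0$, i.e.\ $u$ is a nontrivial weak solution of~\eqref{1.1}. I expect the main obstacle to be the combination of the last two steps: establishing the concentration--compactness principle in the required double-weighted, non-local form, and then carrying out the delicate asymptotic expansions of $\int_{\mathbb{R}^N}|U_\gamma^\varepsilon|^{p_s^*(\delta)}|x|^{-\delta}\,dx$ for $\delta\neq\gamma$ that underpin the strict inequality $c<c^*$ and hence the survival of both nonlinearities in the limit.
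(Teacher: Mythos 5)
Your overall architecture --- mountain pass geometry, a Palais--Smale sequence at a sub-threshold level $c<c^*$, a concentration--compactness dichotomy for the weighted critical densities, existence of extremals for the Hardy--Sobolev constants $K(\mu,\gamma)$ proved independently, and a dilation renormalisation to exclude vanishing --- does match the skeleton of the paper. But the step you call ``the heart of the proof'', establishing $c<c^*$, is wrong as proposed. You plan a Brezis--Nirenberg-type expansion of $\max_{t\geqslant 0}\Phi(tU_\gamma^\varepsilon)$ along the rescalings $U_\gamma^\varepsilon(x)=\varepsilon^{-(N-ps)/p}U_\gamma(x/\varepsilon)$ and to extract a gain from ``a genuinely different order in $\varepsilon$''. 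This cannot work, because every term of $\Phi$ is \emph{exactly} invariant under $u\mapsto r^{(N-ps)/p}u(r\,\cdot)$: a direct change of variables shows $[u_r]_{s,p}=[u]_{s,p}$, $\int|u_r|^p|x|^{-ps}dx=\int|u|^p|x|^{-ps}dx$, and $\int|u_r|^{p_s^*(\delta)}|x|^{-\delta}dx=\int|u|^{p_s^*(\delta)}|x|^{-\delta}dx$ for \emph{every} $\delta\in[0,sp)$. Hence $\Phi(tU_\gamma^\varepsilon)\equiv\Phi(tU_\gamma)$, the two weighted integrals you claim scale at different rates both scale at rate zero, and there is nothing to optimise over $\varepsilon$. (This scale invariance is exactly what makes the problem delicate and is the reason the mountain pass theorem yields only Palais--Smale sequences, not critical points.)

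The paper obtains $c<c^*$ by a much more direct argument that does not require any asymptotics or decay estimates on the extremals (Lemma~\ref{2.2a}). Take $u$ to be an extremal of $K(\mu,\alpha)$, which exists by Theorem~\ref{teo2}. Then $\Phi(tu)$ is dominated for every $t>0$ by $f(t)=\tfrac{t^p}{p}\|u\|^p-\tfrac{t^{p_s^*(\alpha)}}{p_s^*(\alpha)}\int|u|^{p_s^*(\alpha)}|x|^{-\alpha}dx$, whose supremum over $t$ equals $(\tfrac1p-\tfrac1{p_s^*(\alpha)})K(\mu,\alpha)^{-p_s^*(\alpha)/(p_s^*(\alpha)-p)}$ precisely because $u$ is extremal; and since the dropped $\beta$-term $\tfrac{t^{p_s^*(\beta)}}{p_s^*(\beta)}\int|u|^{p_s^*(\beta)}|x|^{-\beta}dx$ is strictly positive for $t>0$, one has $\sup_{t>0}\Phi(tu)<\sup_{t>0}f(t)$, giving the strict inequality with no $\varepsilon$-expansion at all. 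The strict positivity of the dropped term --- not a scaling separation --- is what prevents domination of one nonlinearity by the other. You should also be careful in the final step: the paper does not prove that the renormalised Palais--Smale sequence converges strongly in $D^{s,p}(\mathbb{R}^N)$; it only shows the weak limit $\tilde u$ is nontrivial (via Proposition~\ref{2} and the normalisation~\eqref{norma.seq.u.tilde=epsilon}) and then passes to the limit in $\langle\Phi'(\tilde u_n),\varphi\rangle$ using weak convergence in the weighted Lebesgue spaces plus a.e.\ convergence. The ``$u_n\to u$ strongly, then use $C^1$ continuity of $\Phi'$'' route claims more than is needed and more than is actually established.
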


The proof of Theorem~\ref{teo1} follows several ideas that have appeared in the papers by  
Filippucci, Pucci, and Robert~\cite{MR2498753},
by Ghoussoub and Shakerian~\cite{MR3366777},
and also by Chen~\cite{MR3762809}. 
However, since we consider the case
$1 < p < + \infty$,
we cannot apply the harmonic extension of the fractional Laplacian as described by Caffarelli and Silvestre~\cite{MR2354493} because this idea is valid only in the case $p=2$. 
Moreover, since we consider the whole space $\mathbb{R}^N$ and since problem~\eqref{1.1} contains critical nonlinearities in the sense of the Hardy-Sobolev embeddings, 
it follows that the Hardy-Sobolev embedding 
$D^{s,p}(\mathbb{R}^N)\hookrightarrow L^p(\mathbb{R}^N;\vert x \vert^{-sp})$
is non compact. 
This poses several difficulties to prove that bounded Palais-Smale in the reflexive Banach space $D^{s,p}(\mathbb{R}^N)$ have at least a subsequence that converges strongly to a nontrivial function in this space. Clear enough, the presence of multiple Sobolev critical nonlinearities also contributes to the difficulties in the proof of the theorem. Moreover,
due to the presence of a Hardy potential, with the parameters in the already specified intervals, the functional 
$u \longmapsto ([u]_{s,p}^p - \mu \int_{\mathbb{R}^N} |u|^{p}/|x|^{sp}\,dx) ^{1/p}$ does not define a norm in the Sobolev space $D^{s,p}(\mathbb{R}^N)$, although it can be compared to a suitable norm (see Goyal~\cite{MR3772129} and Filippucci, Pucci and Robert~\cite{MR2498753}); 
as a consequence, the energy functional $\Phi$ is not lower semicontinuous. Based on some estimates proved by 
Brasco, Mosconi, and Squassina~\cite{MR3461371},
by Xiang, B. Zhang, and X. Zhang~\cite{MR3667062}, 
and by
Brasco, Squassina, and Yang~\cite{MR3732174}, 
we managed to overcome these difficulties and prove a refined version of the concentration-compactness principle.

Finally, we should mention that Theorem~\ref{teo2} below is crucial in the proof of Theorem~\ref{teo1}. 

\begin{thm}
The best Hardy constant, defined by  
\begin{align}
\frac{1}{K(\mu,\alpha)}
=\inf_{\substack{u \in D^{s,p}(\mathbb{R}^N) \\ u \neq 0}}
\dfrac{[u]_{s,p}^{p} -\mu\displaystyle\int_{\mathbb{R}_N}\dfrac{\vert u\vert^p}{\vert x\vert^{ps}}\;dx}{\left(\displaystyle\int_{\mathbb{R}^N}\dfrac{\vert u\vert^{p^*_s(\alpha)}}{\vert x\vert^{\alpha}}\;dx\right)^{\frac{p}{p_s^*(\alpha)}}}, \label{6}
\end{align} 
is attained by a nontrivial function $u\in D^{s,p}(\mathbb{R}^N)$.\label{teo2}
\end{thm}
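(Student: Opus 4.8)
The plan is to obtain the extremal for the quotient in~\eqref{6} as the strong limit of a minimizing sequence, using the invariance of the quotient under the dilation group $u\mapsto u_r(x)=r^{(N-ps)/p}u(rx)$ together with a concentration-compactness argument to rule out vanishing and dichotomy. First I would fix a minimizing sequence $(u_n)\subset D^{s,p}(\mathbb{R}^N)$ for~\eqref{6}, normalized so that $\int_{\mathbb{R}^N}|u_n|^{p_s^*(\alpha)}/|x|^{\alpha}\,dx=1$; then $[u_n]_{s,p}^p-\mu\int_{\mathbb{R}^N}|u_n|^p/|x|^{ps}\,dx\to 1/K(\mu,\alpha)$. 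Because $\mu<\mu_H$, the quantity $[u]_{s,p}^p-\mu\int|u|^p/|x|^{ps}\,dx$ is equivalent to $[u]_{s,p}^p$ (for $\mu\geqslant 0$ directly from~\eqref{muH}; for $\mu<0$ trivially), so $(u_n)$ is bounded in $D^{s,p}(\mathbb{R}^N)$ and, passing to a subsequence, $u_n\rightharpoonup u$ weakly in $D^{s,p}(\mathbb{R}^N)$, $u_n\to u$ a.e., and the measures $\mu_n\coloneqq |u(x_n)-u_n(y)|^p|x-y|^{-N-ps}$-type Gagliardo densities, $\nu_n\coloneqq |u_n|^{p_s^*(\alpha)}|x|^{-\alpha}\,dx$ and $\gamma_n\coloneqq |u_n|^p|x|^{-ps}\,dx$ converge weakly-$*$ to limit measures.

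Next I would invoke the refined concentration-compactness principle proved earlier in the paper (the nonlocal, fractional $p$-Laplacian analogue of the Lions principle adapted to the Hardy-Sobolev setting): the limit of $\nu_n$ decomposes as $\nu=|u|^{p_s^*(\alpha)}|x|^{-\alpha}\,dx+\nu_0\delta_0+\sum_j\nu_j\delta_{x_j}$, and at each atom the Hardy-Sobolev inequality forces $\nu_j^{p/p_s^*(\alpha)}\leqslant K(\mu,\alpha)\,\mu_j$ where $\mu_j$ is the mass of the Gagliardo-Hardy limit measure at $x_j$ (at the origin one uses the sharp constant $1/K(\mu,\alpha)$ itself; away from the origin the Hardy term is lower-order and one gets the even better Sobolev constant). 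The crucial dichotomy step is then the following: either the whole mass of $\nu$ concentrates at a single point — which after a dilation (for concentration at $0$) or a dilation plus the translation-invariant Sobolev inequality (for concentration at some $x_j\neq 0$, but this is excluded because the Hardy term would then contribute nothing and strict inequality between the pure-Sobolev and Hardy-Sobolev constants rules it out, or more carefully one shows $x_j=0$ is forced) — gives back a normalized sequence that is tight and precompact modulo dilations; or the mass splits, and then by the strict subadditivity of $t\mapsto t^{p/p_s^*(\alpha)}$ (since $p_s^*(\alpha)>p$) one gets $1/K(\mu,\alpha)=(\sum\nu_j\text{-masses})^{p/p_s^*(\alpha)}\cdot(\dots)<\sum(\text{individual contributions})\geqslant 1/K(\mu,\alpha)$, a contradiction unless exactly one term survives.

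Concretely, I would argue: write $\theta\coloneqq\int_{\mathbb{R}^N}|u|^{p_s^*(\alpha)}|x|^{-\alpha}\,dx$ and let $\nu_0+\sum_j\nu_j=1-\theta$ be the atomic part. By weak lower semicontinuity of the Gagliardo-Hardy form, $1/K(\mu,\alpha)\geqslant [u]_{s,p}^p-\mu\int|u|^p|x|^{-ps}\,dx+\sum(\text{atomic masses})$, and by the pointwise Hardy-Sobolev inequalities each block contributes at least $(1/K(\mu,\alpha))\cdot(\text{its }\nu\text{-mass})^{p/p_s^*(\alpha)}$; summing and using strict concavity of $t\mapsto t^{p/p_s^*(\alpha)}$ on $[0,1]$ forces all the mass to sit in a single block. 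If that block is the absolutely continuous part, then $\theta=1$, the convergence $u_n\to u$ is strong in the Hardy-Sobolev norm, and $u$ is the desired extremal. If that block is the atom at the origin, I would apply a rescaling $u_n\mapsto (u_n)_{r_n}$ with $r_n\to\infty$ chosen so that, say, $\int_{B_1}|(u_n)_{r_n}|^{p_s^*(\alpha)}|x|^{-\alpha}\,dx=\int_{\mathbb{R}^N\setminus B_1}(\dots)=\tfrac12$; this new minimizing sequence cannot concentrate at $0$ (mass $\tfrac12$ outside $B_1$) nor escape to infinity nor vanish (mass $\tfrac12$ inside $B_1$), so its weak limit is nontrivial and, by the first case applied to it, is an extremal. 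Finally, an atom at a point $x_j\neq 0$ is impossible: there the Hardy weight $|x|^{-ps}$ is bounded near $x_j$, so concentration there would be governed by the Sobolev constant $S$ (the $\alpha=0$, $\mu=0$ best constant), and since $K(\mu,\alpha)$ with the singular weight and $\mu\geqslant 0$ strictly exceeds the corresponding Hardy-Sobolev constant at such a point — i.e.\ the inequality $\nu_j^{p/p_s^*(\alpha)}\leqslant S^{-1}\mu_j$ with $S^{-1}<1/K(\mu,\alpha)$ — the subadditivity accounting again rules it out.

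The main obstacle I anticipate is the concentration-compactness step itself — specifically, proving the sharp pointwise inequality at the atom located at the origin with the \emph{correct} constant $1/K(\mu,\alpha)$ (not a larger one), which requires knowing that the Hardy term does not degrade the constant at $0$; and dealing carefully with the non-local nature of the Gagliardo seminorm, where the "Brezis-Lieb"-type splitting $[u_n]_{s,p}^p=[u_n-u]_{s,p}^p+[u]_{s,p}^p+o(1)$ is more delicate than in the local case and must be combined with the profile-decomposition estimates from Brasco-Mosconi-Squassina and Xiang-Zhang-Zhang cited earlier. A secondary technical point is justifying the dilation normalization rigorously (that one can always choose $r_n$ realizing the half-mass split, by continuity and monotonicity of $r\mapsto\int_{B_1}|(u_n)_r|^{p_s^*(\alpha)}|x|^{-\alpha}\,dx$), and then checking that the rescaled weak limit genuinely realizes the infimum rather than merely being nontrivial — this closes via lower semicontinuity and the already-established fact that no further splitting can occur.
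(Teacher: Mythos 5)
Your proposal shares the heart of the paper's argument (the dilation normalization fixing half of the $\int|u|^{p_s^*(\alpha)}|x|^{-\alpha}\,dx$ mass inside $B_1$, the resulting boundedness and weak convergence, and the strict-concavity accounting that forces all mass into a single block), but it routes the rest through a full Lions-type concentration--compactness decomposition $\nu=|u|^{p_s^*(\alpha)}|x|^{-\alpha}\,dx+\nu_0\delta_0+\sum_j\nu_j\delta_{x_j}$, which the paper deliberately avoids. The paper's actual mechanism is to apply Ekeland's variational principle to the normalized minimizing sequence to produce an \emph{almost critical} sequence: $J'(u_n)-\tfrac{1}{K(\mu,\alpha)}G'(u_n)\to 0$ in $(D^{s,p}(\mathbb{R}^N))'$. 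This Euler--Lagrange-type information is then tested against cut-offs $\eta^p\tilde u_n$; together with the half-mass normalization it rules out $\tilde u\equiv 0$ directly (one obtains $\Vert\eta\tilde u_n\Vert^p\leqslant K(\mu,\alpha)^{-1}(1/2)^{(ps-\alpha)/(N-\alpha)}(\cdots)^{p/p_s^*(\alpha)}+o_n(1)$, which forces the $B_1$-mass to vanish, a contradiction), and then the Brezis--Lieb splitting plus the Brasco--Squassina--Yang norm-additivity lemma, combined with the same concavity observation you use, gives $\int|\tilde u|^{p_s^*(\alpha)}|x|^{-\alpha}\,dx=1$ and $\Vert\tilde u\Vert^p=1/K(\mu,\alpha)$. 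The Ekeland step is exactly what lets the paper sidestep the obstacle you correctly flag — establishing the sharp constant at the atom at $0$ — so your anticipated difficulty is real and the paper's route is built around not having to face it.

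Two more specific corrections. First, the way you exclude atoms at $x_j\neq 0$ is not the mechanism: since $\alpha>0$ gives $p_s^*(\alpha)<p_s^*$, the embedding $D^{s,p}(\mathbb{R}^N)\hookrightarrow L^{p_s^*(\alpha)}_{\operatorname{loc}}(\mathbb{R}^N\setminus\{0\};|x|^{-\alpha})$ is \emph{compact} away from the origin, so $\nu$ simply cannot charge points $x_j\neq 0$; there is no need to compare constants there, and the inequality direction you invoke ($S^{-1}<1/K(\mu,\alpha)$) would in any case go the wrong way for a subadditivity exclusion. Second, writing a genuine Lions decomposition for the Gagliardo energy is itself nontrivial for the non-local seminorm — the "local energy density" is not pointwise, which is precisely why the paper's Section~\ref{sec:structureps} proves only a cut-off-based dichotomy (Lemmas~\ref{lema.ref.af.3.1}--\ref{lem:desig:gammalambdaxi} and Proposition~\ref{2}) rather than a full measure decomposition; invoking the stronger form here would require a separate, substantial proof. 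So your plan would work in spirit, but as written it relies on a CCP that is strictly stronger than what the paper establishes, whereas the Ekeland + test-function argument is the ingredient that makes the paper's proof close without it.
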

The paper is divided in several sections. 
In section~\ref{sec:prelim}, we use the mountain pass theorem to show the existence of suitable Palais-Smale sequences;
in section~\ref{sec:structureps}, we study the behavior of these Palais-Smale sequences;
in section~\ref{sec:proofteo1}, we prove Theorem~\ref{teo1};
and in section~\ref{sec:extremals}, we prove Theorem~\ref{teo2}.

\section{Preliminary results}
\label{sec:prelim}

In this section we present some preliminary results that will be usefull in the proof of Theorem~\ref{teo1}. By the definition of $\mu_H$, the following inequality is valid,
\begin{align}
\mu_H 
& \leqslant\dfrac{[u]_{s,p}^p}{\displaystyle\int_\mathbb{R^N} 
\dfrac{\vert u \vert^p}{\vert x \vert^{ps}} \, dx}
\quad \text{for all } u \in D^{s,p}(\mathbb{R}^N)
\setminus \{0\}.\label{desg.hardy}
\end{align}
It is well known that the sharp constant $\mu_H$ is attained; the proof of this claim can be found in paper by Frank and Seiringer~\cite{MR2469027}. 
For $0 < \mu < \mu_H $, it follows from 
inequality~\eqref{desg.hardy} that 
\begin{equation}
\Vert u \Vert \coloneqq \left([u]_{s,p}^p-\mu\int_{\mathbb{R}^N}\frac{\vert u\vert^p}{\vert x \vert^{ps}}dx\right)^{\frac{1}{p}}\label{def.norma.dsmu}
\end{equation}
is well defined in the space $D^{s,p}(\mathbb{R}^N)$. 
Note that $\Vert \cdot \Vert$ is comparable to the Gagliardo norm 
$[\,\cdot\,]_{s,p}$; to see this, it is sufficient to use the pair of inequalities
\begin{equation}
\left(1-\frac{\mu_+}{\mu_1}\right)[u]_{s,p}^p\leqslant\Vert u\Vert^p\leqslant\left( 1 +\frac{\mu_-}{\mu_1}\right)[u]_{s,p}^p, \label{eq4fp}
\end{equation}
valid for all
$u \in D^{s,p}(\mathbb{R}^N)$
where $\mu_+ \coloneqq \max\{\mu ,0\}$ 
and 
$\mu_- \coloneqq \max\{-\mu ,0\}$. 
By combining the Hardy inequality proved in~\cite{MR2469027}
together with the Sobolev inequality proved in 
Brasco, Mosconi and Squassina~\cite{MR3461371},
we obtain an inequality of the Hardy-Sobolev type. 
Indeed, for $ 0 < \alpha < ps $, by the Hölder inequality and 
by the fractional versions of the Hardy and the Sobolev 
inequalities, the embedding 
$D^{s,p}(\mathbb{R^N})
\hookrightarrow 
L^{p_s^*(\alpha)}(\mathbb{R^N},\vert x \vert^{-\alpha})$ 
is continuous. 
Using the sharp constant of this embddin, we defined the constant $K(\mu,\alpha)$ in~\eqref{6} 
and in section~\ref{sec:extremals} we show the Theorem~\ref{teo2}.

Now we recall the definition of the energy functional naturally associated to the variational problem~\eqref{1.1} and rewrite it in the form
\begin{align*}
\Phi (u)=\frac{\Vert u \Vert^p}{p} - \frac{1}{p^*_s(\beta)}\displaystyle\int_{\mathbb{R}^N}\dfrac{|u|^{p^*_s(\beta)}}{\vert x \vert^{\beta}}dx -\frac{1}{p^*_s(\alpha)}\int_{\mathbb{R}^N}\frac{|u|^{p^*_s(\alpha)}}{\vert x \vert^{\alpha}}dx.
\end{align*}
Using the fractional versions of the Hardy and the Hardy-Sobolev inequalities above mentioned, it is easy to show that the functional $\Phi$ is well defined in the space $D^{s,p}(\mathbb{R}^N)$.

Recall that the sequence $\{ u_n \} \subset D^{s,p}(\mathbb{R}^N)$ is a Palais-Smale sequence for the energy functional $\Phi$ at the level $c \in \mathbb{R}$, 
in short $(PS)_c$, if $\Phi(u_n) \to c$ and $\langle \Phi'(u_n), \phi \rangle \to 0$ for every $\phi \in D^{s,p}(\mathbb{R}^N)$. Our first preliminary result concerns the existence of the Palais-Smale sequence for the energy functional $\Phi$.

\begin{prop}
Let $ \mu \in [0,\mu_H)$ and$ \alpha\in [0,sp)$.
Then there exists a sequence 
$\{u_n\}_n\subset D^{s,p}(\mathbb{R}^N)$ 
such that 
\begin{align*}
\displaystyle\lim_{n\rightarrow +\infty}\Phi(u_n)=c
\text{ and } 
\displaystyle\lim_{n\rightarrow +\infty}\Phi'(u_n)=0
\text{ strongly in }(D^{s,p}(\mathbb{R}^N))',
\end{align*}
where $ 0 < c < c_*$ and is defined as being
\begin{align}
\min\left\lbrace
\left(\frac{p_s^*(\beta)-p}{pp_s^*(\beta)}\right)
K(\mu,\beta)^{-\frac{p_s^*(\beta)}{p_s^*(\beta)-p}},
\left(\frac{p_s^*(\alpha)-p}{pp_s^*(\alpha)}\right)
K(\mu,\alpha)^{-\frac{p_s^*(\alpha)}{p_s^*(\alpha)-p}}\right\rbrace.\label{c*}
\end{align} 
\label{prop1}
\end{prop}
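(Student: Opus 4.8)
The plan is to apply the mountain pass theorem of Ambrosetti--Rabinowitz to the energy functional $\Phi$ and then to refine the energy estimate of the minimax level using suitable test functions built from the extremals appearing in Theorem~\ref{teo2}. First I would verify the mountain pass geometry. Since $p_s^*(\alpha), p_s^*(\beta) > p$, the Hardy--Sobolev inequalities recorded above (encoded in the constants $K(\mu,\alpha)$, $K(\mu,\beta)$) give
\begin{align*}
\Phi(u) \geqslant \frac{1}{p}\Vert u\Vert^p - C_\beta \Vert u\Vert^{p_s^*(\beta)} - C_\alpha \Vert u\Vert^{p_s^*(\alpha)},
\end{align*}
so there are constants $\rho>0$ and $\delta>0$ with $\Phi(u)\geqslant\delta$ whenever $\Vert u\Vert=\rho$; moreover $\Phi(0)=0$, and for any fixed $u_0\neq 0$ one has $\Phi(tu_0)\to-\infty$ as $t\to+\infty$ because the two critical terms scale like $t^{p_s^*(\beta)}$ and $t^{p_s^*(\alpha)}$, which dominate $t^p$. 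Fixing such a $u_0$ with $\Vert t_0 u_0\Vert > \rho$ and $\Phi(t_0u_0)<0$, the mountain pass theorem yields a $(PS)_c$ sequence at the level
\begin{align*}
c = \inf_{\gamma\in\Gamma}\max_{t\in[0,1]}\Phi(\gamma(t)) \geqslant \delta > 0,
\end{align*}
where $\Gamma$ is the usual set of paths joining $0$ to $t_0u_0$. This already gives $c>0$.

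The substantive step is to show that the minimax level can be taken below $c_*$. Here I would use, say, the extremal function $U$ realizing $1/K(\mu,\alpha)$ from Theorem~\ref{teo2} (or a suitably truncated/rescaled version $U_\varepsilon$), choose the path $t\mapsto t\,U$, and estimate $\max_{t\geqslant 0}\Phi(tU)$. Ignoring momentarily the second critical term, the one-variable function $t\mapsto \frac{t^p}{p}\Vert U\Vert^p - \frac{t^{p_s^*(\alpha)}}{p_s^*(\alpha)}\int |U|^{p_s^*(\alpha)}/|x|^\alpha\,dx$ attains its maximum at an explicit $t_{\max}$, and since $U$ is an extremal, the maximum value equals exactly
\begin{align*}
\left(\frac{p_s^*(\alpha)-p}{p\,p_s^*(\alpha)}\right) K(\mu,\alpha)^{-\frac{p_s^*(\alpha)}{p_s^*(\alpha)-p}},
\end{align*}
which is one of the two terms in the definition~\eqref{c*} of $c_*$. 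The point is then that the \emph{presence of the additional nonlinearity} with exponent $p_s^*(\beta)$ only \emph{lowers} $\Phi$ along this ray — $\Phi(tU)$ has an extra strictly negative term $-\frac{t^{p_s^*(\beta)}}{p_s^*(\beta)}\int |U|^{p_s^*(\beta)}/|x|^\beta\,dx$ — so $\max_{t\geqslant 0}\Phi(tU)$ is strictly less than the displayed value, hence strictly less than $c_*$. Taking the symmetric choice with the extremal for $1/K(\mu,\beta)$ handles the other term in the minimum, and in either case $c<c_*$.

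Combining the two parts: the mountain pass theorem produces a $(PS)_c$ sequence $\{u_n\}\subset D^{s,p}(\mathbb{R}^N)$ with $\Phi(u_n)\to c$ and $\Phi'(u_n)\to 0$ in $(D^{s,p}(\mathbb{R}^N))'$, and by construction $0<c<c_*$, as claimed. The main obstacle I anticipate is the sharp energy estimate of the second paragraph: one must be careful that the extremal $U$ from Theorem~\ref{teo2} indeed lies in $D^{s,p}(\mathbb{R}^N)$ and has finite $\int |U|^{p_s^*(\beta)}/|x|^\beta\,dx$ (so that the extra negative term is genuinely present and finite), and that the decay/integrability of $U$ near the origin and at infinity is good enough to make the one-dimensional maximization rigorous; if $U$ is known only up to a limiting/rescaling procedure, one may need a concentration parameter $U_\varepsilon$ and an asymptotic expansion, which is where the estimates from \cite{MR3461371}, \cite{MR3667062}, \cite{MR3732174} would enter. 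A secondary subtlety is that $\Phi$ is not lower semicontinuous (as noted in the introduction), but this does not affect the present proposition, which only asserts the existence of a Palais--Smale sequence at a controlled level, not its convergence.
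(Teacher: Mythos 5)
Your proposal is correct and follows essentially the same route as the paper: mountain-pass geometry from the Hardy--Sobolev bounds, then an energy estimate along the ray $t\mapsto tU$ through the extremal $U$ of Theorem~\ref{teo2}, with the second critical term forcing the strict inequality $c<c_*$ (the paper makes the strictness argument by comparing the two maximizers $t_1,t_2$; your observation that the ray value is pointwise strictly below $f(t)$ is an equivalent way to say the same thing). The subtlety you flag about finiteness of $\int |U|^{p_s^*(\beta)}/|x|^{\beta}\,dx$ is automatic from $U\in D^{s,p}(\mathbb{R}^N)$ together with the Hardy--Sobolev inequality~\eqref{desig:hs}, and since Theorem~\ref{teo2} already supplies an honest extremal, no concentration parameter $U_\varepsilon$ or asymptotic expansion is needed here.
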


To prove Proposition~\ref{prop1} we need the following version of the mountain-pass theorem by Ambrosetti and Rabinowitz~\cite{MR0370183}. 
\begin{prop}
\label{teo3}
Let $(V,\Vert\, \cdot \, \Vert)$ be a Banach space and
consider a function $F\in C^1(V;\mathbb{R})$. 
Suppose that 
\begin{enumerate}
\item $F(0)=0$.
\item There exist $\lambda > 0$ and $R>0$ such that 
$F(u) \geqslant \lambda$ for all $u\in V$, 
with $\Vert u \Vert = R$.
\item There exist $v_0 \in V$ such that 
$\displaystyle\liminf_{t\rightarrow +\infty}F(tv_0)<0$.
\end{enumerate}
Let $t_0>0$ be a positive real number such that 
$\Vert t_0v_0\Vert > R$ and $F(t_0v_0)<0$. 
Define
\begin{align*}
c & \coloneqq \displaystyle\inf_{\gamma\in\Gamma}\sup_{t\in [0,1]}F(\gamma (t)) 
\end{align*}
where $ \Gamma \coloneqq \{ \gamma\in C([0,1],V) \colon 
\gamma (0)=0 \;\mbox{and}\;\gamma (1)=t_0v_0\} $. 
Then there exists a Palais-Smale $(PS)_c$ sequence at the level $c \in \mathbb{R}$ for the functional $F$.
\end{prop}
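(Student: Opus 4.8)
The plan is to produce a $(PS)_c$ sequence by applying Ekeland's variational principle on the complete metric space of admissible paths $\Gamma$, which is the classical route to the mountain pass theorem. Before that I would check that $c$ is a finite positive number, $0<\lambda\leq c<+\infty$. Finiteness follows by testing with the segment $\gamma_*(t)\coloneqq t\,t_0v_0$, which lies in $\Gamma$ and along which $F$ is continuous on the compact interval $[0,1]$, hence bounded, so $c\leq\sup_{t\in[0,1]}F(\gamma_*(t))<+\infty$. For the lower bound, given any $\gamma\in\Gamma$ the continuous scalar function $t\mapsto\|\gamma(t)\|$ equals $0$ at $t=0$ and $\|t_0v_0\|>R$ at $t=1$, so it takes the value $R$ at some $t_\gamma\in(0,1)$; the hypothesis $F\geq\lambda$ on the sphere $\|u\|=R$ then gives $\sup_tF(\gamma(t))\geq F(\gamma(t_\gamma))\geq\lambda$, and taking the infimum over $\gamma$ yields $c\geq\lambda>0$.

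Next I would equip $\Gamma$ with the uniform metric $d(\gamma,\eta)\coloneqq\max_{t\in[0,1]}\|\gamma(t)-\eta(t)\|$; this space is complete, being a closed subset of the Banach space $C([0,1];V)$ (the endpoint constraints $\gamma(0)=0$ and $\gamma(1)=t_0v_0$ survive uniform limits). The functional $\Psi(\gamma)\coloneqq\max_{t\in[0,1]}F(\gamma(t))$ is continuous on $\Gamma$ and $\inf_\Gamma\Psi=c$. For each $n\in\mathbb{N}$ I would pick $\gamma_n\in\Gamma$ with $\Psi(\gamma_n)\leq c+1/n$ and invoke Ekeland's principle with $\varepsilon=1/n$ and $\lambda_E=1/\sqrt n$, obtaining $h_n\in\Gamma$ with $\Psi(h_n)\leq c+1/n$, $d(h_n,\gamma_n)\leq 1/\sqrt n$, and
\[
\Psi(\eta)>\Psi(h_n)-\tfrac{1}{\sqrt n}\,d(\eta,h_n)\qquad\text{for all }\eta\in\Gamma\setminus\{h_n\}.
\]

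The final step is to extract the Palais–Smale point from $h_n$. Let $B_n\coloneqq\{t\in[0,1]:F(h_n(t))=\Psi(h_n)\}$, a nonempty compact set; since $F(h_n(0))=F(0)=0$ and $F(h_n(1))=F(t_0v_0)<0$ while $\Psi(h_n)\geq c\geq\lambda>0$, we have $B_n\subset(0,1)$ with $\operatorname{dist}(B_n,\{0,1\})>0$. Set $m_n\coloneqq\min_{t\in B_n}\|F'(h_n(t))\|$ and $u_n\coloneqq h_n(t_n)$ for a point $t_n\in B_n$ realizing the minimum. I claim $m_n\to 0$; granting this, $F(u_n)=\Psi(h_n)\in[c,c+1/n]\to c$ and $\|F'(u_n)\|=m_n\to 0$, so $\{u_n\}$ is the required $(PS)_c$ sequence. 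To prove the claim I argue by contradiction: if $m_{n_k}\geq\delta>0$ along a subsequence, then, using a locally Lipschitz pseudo-gradient vector field $W$ for $F$ defined in a neighborhood of the compact set $h_{n_k}([0,1])$ (legitimate because $\|F'\|\geq\delta$ on $\{h_{n_k}(t):t\in B_{n_k}\}$, hence $\geq\delta/2$ nearby), a cut-off $\phi\colon[0,1]\to[0,1]$ equal to $1$ on a neighborhood $\mathcal{N}$ of $B_{n_k}$ and vanishing near $\{0,1\}$, and the fact that $F\circ h_{n_k}\leq\Psi(h_{n_k})-\rho$ outside $\mathcal{N}$ (by compactness), I deform $h_{n_k}$ into the admissible path $\eta_\tau(t)\coloneqq h_{n_k}(t)-\tau\,\phi(t)\,W(h_{n_k}(t))$; a first-order expansion of $F$ along $\eta_\tau$, uniform in $t$, gives $d(\eta_\tau,h_{n_k})\leq\tau$ and $\Psi(\eta_\tau)\leq\Psi(h_{n_k})-c'\tau$ for all small $\tau>0$, with $c'>0$ proportional to $\delta$. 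Inserting $\eta_\tau$ into the Ekeland inequality forces $c'\leq 1/\sqrt{n_k}$, which is absurd for $k$ large; hence $m_n\to 0$.

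The hard part is precisely this deformation step. Because $V$ is only a Banach space there is no gradient flow, so one is obliged to use a pseudo-gradient vector field; moreover the maximum of $F$ along $h_n$ is in general attained on the whole set $B_n$, not at a single point, so the perturbation and the decay estimate must be carried out uniformly over a neighborhood of $B_n$ while keeping the endpoints of the path fixed. An equivalent but more compact alternative for this step is a quantitative deformation lemma: if there were no $(PS)_c$ sequence one would have $\|F'(u)\|\geq\delta$ whenever $|F(u)-c|\leq 2\varepsilon$ for some $0<\varepsilon<c/2$, and — since $F(0)$ and $F(t_0v_0)$ lie outside $[c-2\varepsilon,c+2\varepsilon]$ — one could push any path with $\max F\leq c+\varepsilon$ below the level $c$ without moving its endpoints, contradicting the definition of $c$.
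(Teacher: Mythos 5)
Your proposal is correct in outline, but it is worth noting that the paper does not prove this proposition at all: it is quoted as a known version of the mountain pass theorem and attributed to Ambrosetti--Rabinowitz, so the ``paper's proof'' is a citation. What you supply is the standard self-contained derivation of a $(PS)_c$ sequence without assuming the Palais--Smale condition: verify $0<\lambda\leqslant c<+\infty$ (your intermediate-value argument on $t\mapsto\Vert\gamma(t)\Vert$ is exactly right), apply Ekeland's variational principle to $\Psi(\gamma)=\max_{t}F(\gamma(t))$ on the complete metric space $(\Gamma,d)$, and convert the Ekeland inequality into smallness of $\Vert F'\Vert$ somewhere on the maximal set $B_n$ by a pseudo-gradient deformation that fixes the endpoints; the alternative you mention via the quantitative deformation lemma is the route taken, e.g., in Willem's book. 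Two small points you should tighten if you write this out in full: the lower semicontinuity (in fact continuity) of $\Psi$ for the uniform metric deserves a line (evaluate at a maximizer of the limit path and use continuity of $F$), and the pseudo-gradient field need only be constructed on a neighborhood of $\{h_n(t)\colon t\in\mathcal{N}\}$ where $\Vert F'\Vert\geqslant\delta/2$, not of all of $h_n([0,1])$, since $F'$ may vanish elsewhere on the path; the cut-off $\phi$ supported in $\mathcal{N}$ makes this sufficient. With these details, your argument is a complete proof of the statement the paper merely invokes, which is arguably more than the paper provides.
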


The proof of Proposition~\ref{prop1} follows from the next lemma.
\begin{lem}
\label{lem:functionalmpt}
The energy functional $\Phi$ verifies the hypotheses of 
Proposition~\ref{teo3} for every function 
$u\in D^{s,p}(\mathbb{R}^N)\setminus \{0\}$.\label{lema.hipotese.tpm}
\end{lem}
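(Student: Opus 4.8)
The plan is to verify the three conditions of the mountain-pass geometry (Proposition~\ref{teo3}) for the energy functional
\begin{align*}
\Phi (u)=\frac{\Vert u \Vert^p}{p} - \frac{1}{p^*_s(\beta)}\int_{\mathbb{R}^N}\frac{|u|^{p^*_s(\beta)}}{\vert x \vert^{\beta}}\,dx -\frac{1}{p^*_s(\alpha)}\int_{\mathbb{R}^N}\frac{|u|^{p^*_s(\alpha)}}{\vert x \vert^{\alpha}}\,dx,
\end{align*}
using the norm $\Vert \cdot \Vert$ from~\eqref{def.norma.dsmu}, which is well defined and equivalent to $[\,\cdot\,]_{s,p}$ thanks to~\eqref{eq4fp} since $\mu\in[0,\mu_H)$. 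Condition~(1), $\Phi(0)=0$, is immediate from the formula. For condition~(2), I would use the Hardy--Sobolev inequality: by definition of $K(\mu,\gamma)$ in~\eqref{6}, for $\gamma\in\{\alpha,\beta\}$ one has $\int_{\mathbb{R}^N}|u|^{p^*_s(\gamma)}/|x|^{\gamma}\,dx \leqslant K(\mu,\gamma)^{p^*_s(\gamma)/p}\,\Vert u\Vert^{p^*_s(\gamma)}$. Hence
\begin{align*}
\Phi(u)\geqslant \frac{1}{p}\Vert u\Vert^p - \frac{K(\mu,\beta)^{p^*_s(\beta)/p}}{p^*_s(\beta)}\Vert u\Vert^{p^*_s(\beta)} - \frac{K(\mu,\alpha)^{p^*_s(\alpha)/p}}{p^*_s(\alpha)}\Vert u\Vert^{p^*_s(\alpha)}.
\end{align*}
Since $0<\alpha,\beta<sp$ gives $p^*_s(\alpha),p^*_s(\beta)>p$, the right-hand side is $g(\Vert u\Vert)$ for a function $g(r)=\frac1p r^p - c_1 r^{p^*_s(\beta)} - c_2 r^{p^*_s(\alpha)}$ with $g(0)=0$ and $g(r)>0$ for $r>0$ small; choosing $R>0$ small enough that $g(R)>0$ and setting $\lambda\coloneqq g(R)>0$ yields $\Phi(u)\geqslant\lambda$ whenever $\Vert u\Vert=R$.

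For condition~(3), I would fix any $v_0\in D^{s,p}(\mathbb{R}^N)\setminus\{0\}$ with $v_0\geqslant 0$ (or simply any nonzero $v_0$, using $|v_0|$ if needed so that the nonlinear integrals are strictly positive). Then
\begin{align*}
\Phi(tv_0)=\frac{t^p}{p}\Vert v_0\Vert^p - \frac{t^{p^*_s(\beta)}}{p^*_s(\beta)}\int_{\mathbb{R}^N}\frac{|v_0|^{p^*_s(\beta)}}{\vert x\vert^{\beta}}\,dx - \frac{t^{p^*_s(\alpha)}}{p^*_s(\alpha)}\int_{\mathbb{R}^N}\frac{|v_0|^{p^*_s(\alpha)}}{\vert x\vert^{\alpha}}\,dx
\end{align*}
for $t>0$; since both exponents $p^*_s(\alpha),p^*_s(\beta)$ strictly exceed $p$ and the two integrals are strictly positive, the negative terms dominate as $t\to+\infty$, so $\Phi(tv_0)\to-\infty$, which in particular gives $\liminf_{t\to+\infty}\Phi(tv_0)<0$. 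Consequently there is $t_0>0$ with $\Vert t_0v_0\Vert>R$ and $\Phi(t_0v_0)<0$, as required; applying Proposition~\ref{teo3} with $F=\Phi$, $V=D^{s,p}(\mathbb{R}^N)$ then produces the desired $(PS)_c$ sequence.

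I do not expect a serious obstacle here: the argument is the standard mountain-pass verification, and the only points needing care are (i) confirming that $K(\mu,\gamma)$ is finite and positive so the Hardy--Sobolev estimate is usable — which is guaranteed by the continuity of the embedding $D^{s,p}(\mathbb{R}^N)\hookrightarrow L^{p^*_s(\gamma)}(\mathbb{R}^N,|x|^{-\gamma})$ recorded in the excerpt — and (ii) ensuring the nonlinear integrals of $v_0$ are strictly positive, which is why one takes $v_0\not\equiv 0$ and, if convenient, replaces it by $|v_0|$ (note $[|v_0|]_{s,p}\leqslant[v_0]_{s,p}$ and $\int|v_0|^p/|x|^{ps}=\int|v_0|^p/|x|^{ps}$, so $\Vert |v_0|\Vert\leqslant\Vert v_0\Vert$ and the estimate in~(3) only improves). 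The delicate quantitative information — that the mountain-pass level $c$ actually satisfies $0<c<c_*$ with $c_*$ as in~\eqref{c*} — is a separate matter handled in the proof of Proposition~\ref{prop1} via a suitable choice of $v_0$ built from the extremals of Theorem~\ref{teo2} and a fine energy estimate, and is not part of the geometric conditions being checked in this lemma.
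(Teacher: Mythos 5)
Your proposal is correct and follows essentially the same route as the paper's proof: both verify conditions (1)–(3) of Proposition~\ref{teo3} by (i) noting $\Phi(0)=0$, (ii) bounding $\Phi(u)$ from below via the definition~\eqref{6} of $K(\mu,\gamma)$ and the equivalence of norms~\eqref{eq4fp} to produce the ball of radius $R$ with $\Phi\geqslant\lambda>0$ on it, and (iii) observing $\Phi(tv_0)\to-\infty$ as $t\to+\infty$ for any nontrivial $v_0$. Your remark about replacing $v_0$ by $|v_0|$ to guarantee strict positivity of the nonlinear integrals is a small added precaution not spelled out in the paper but harmless, and your final paragraph correctly identifies that the level estimate $0<c<c_*$ is handled separately in Lemma~\ref{2.2a}, not here.
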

\begin{proof}
Clearly, $\Phi\in C^1(D^{s,p}(\mathbb{R}^N))$ and 
$\Phi (0)=0$. By definition~\eqref{6} of the best constant $1/K(\mu,\alpha)$, we have 
\begin{align*}
\Phi (u) 
& \geqslant  \displaystyle\frac{\Vert u\Vert^p}{p} -\frac{K(\mu,\beta)^{\frac{p_s^*(\beta)}{p}}\Vert u\Vert^{p_s^*(\beta)}}{p_s^*(\beta)} -\frac{K(\mu,\alpha)^{\frac{p_s^*(\alpha)}{p}}\Vert u\Vert^{p_s^*(\alpha)}}{p_s^*(\alpha)}\\
& =  \displaystyle\left( \frac{1}{p} -\frac{K(\mu,\beta)^{\frac{p_s^*(\beta)}{p}}\Vert u\Vert^{p_s^*(\beta)-p}}{p_s^*(\beta)} -\frac{K(\mu,\alpha)^{\frac{p_s^*(\alpha)}{p}}\Vert u\Vert^{p_s^*(\alpha)-p}}{p_s^*(\alpha)} \right)
\Vert u \Vert^p.
\end{align*}
Since $\alpha, \beta \in (0,sp)$, it follows that $p_s^*(\alpha)>p$ and $p^*_s(\beta)>p$. 
Therefore, from the pair of inequalities~\eqref{eq4fp} we deduce, for
$[u]_{s,p}=R$ suitably chosen, that there exists 
$\lambda>0 $ such that $\Phi (u) \geqslant \lambda > 0$.

Let $u\in D^{s,p}(\mathbb{R})$ be a nontrivial function; 
for $t>0$ it is valid the identity
\begin{align*}
\Phi (tu)=\frac{t^p\Vert u \Vert^p}{p} - \frac{t^{p^*_s(\beta)}}{p^*_s(\beta)}\displaystyle\int_{\mathbb{R}^N}\dfrac{|u|^{p^*_s(\beta)}}{\vert x\vert^{\beta}}dx -\frac{t^{p^*_s(\alpha)}}{p^*_s(\alpha)}\int_{\mathbb{R}^N}\frac{|u|^{p^*_s(\alpha)}}{\vert x \vert^{\alpha}}dx.
\end{align*}
This implies that
$\lim_{t\rightarrow +\infty}\Phi (tu)= -\infty$ 
as $t \rightarrow +\infty$.
So, we consider $t_u>0$ such that 
$\Phi (tu)<0$ for all $t \geqslant t_u$ and $[t_uu]_{s,p}>R$. Now we can define
\begin{align*}
\Gamma_u 
& \coloneqq 
\left\lbrace 
\gamma \in C([0,1],D^{s,p}(\mathbb{R}^N)) \colon 
\gamma (0)=0 \;\mbox{and}\;\gamma (1)=t_uu 
\right\rbrace
\end{align*}
and 
\begin{align*}
c_u \coloneqq \displaystyle\inf_{\gamma\in\Gamma_u}\sup_{t\in [0,1]}\Phi (\gamma (t)).
\end{align*}
It follows that the functional $\Phi$ verify the hypotheses of Proposition~\ref{teo3}. 
\end{proof}

Using Lemma~\ref{lem:functionalmpt} as well as Proposition~\ref{teo3}, we deduce the existence of a Palais-Smale sequence 
$\{u_n\}_n \subset D^{s,p}(\mathbb{R}^N) $ 
for the energy functional $\Phi$ 
such that 
\begin{align*}
\displaystyle\lim_{n\rightarrow + \infty}\Phi (u_n) =c_u 
\quad \mbox{and} \quad 
\displaystyle\lim_{n\rightarrow +\infty}\Phi' (u_n)=0
\quad\text{strongly in } (D^{s,p}(\mathbb{R}^N))'.
\end{align*}
Moreover, in the definition of $c_u$ we deduce also that
$c_u \geqslant \lambda > 0$; 
so, 
$ c_u > 0 $ for all $ u \in D^{s,p}(\mathbb{R}^N)\setminus\{0\}$.

\begin{lem}
Suppose that $\mu \in [0,\mu_H)$ and that 
$\alpha \in [0,sp)$. 
Then there exists 
$u \in D^{s,p}(\mathbb{R}^N)\setminus \{0\}$ 
such that $ 0 < c_u < c_*$,
where $c_*$ is defined in~\eqref{c*}.
\label{2.2a}
\end{lem}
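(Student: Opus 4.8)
The plan is to exhibit, for a carefully chosen test function $u$, an explicit upper bound on the mountain-pass level $c_u$ that falls strictly below the threshold $c_*$ in~\eqref{c*}. Since $c_u = \inf_{\gamma \in \Gamma_u} \sup_{t \in [0,1]} \Phi(\gamma(t))$ and the path $t \mapsto t\,(t_u u)$ lies in $\Gamma_u$, we have the estimate $c_u \leqslant \sup_{t > 0} \Phi(tu)$; so it suffices to find one nonzero $u$ for which $\sup_{t>0}\Phi(tu) < c_*$. First I would analyze the one-dimensional function $g(t) := \Phi(tu)$ for fixed $u \neq 0$: it has the form $g(t) = \frac{t^p}{p}\|u\|^p - \frac{t^{p_s^*(\beta)}}{p_s^*(\beta)} A_\beta(u) - \frac{t^{p_s^*(\alpha)}}{p_s^*(\alpha)} A_\alpha(u)$, where $A_\gamma(u) := \int_{\mathbb{R}^N} |u|^{p_s^*(\gamma)}/|x|^\gamma\,dx > 0$. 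Since $p < p_s^*(\alpha), p_s^*(\beta)$, the function $g$ is positive and increasing near $0$, eventually negative, hence attains a positive maximum at some $t_{\max}(u) > 0$. Dropping one of the two negative terms (say the $\alpha$-term) gives the bound $\sup_{t>0} g(t) \leqslant \sup_{t>0}\bigl(\frac{t^p}{p}\|u\|^p - \frac{t^{p_s^*(\beta)}}{p_s^*(\beta)}A_\beta(u)\bigr) = \bigl(\frac{p_s^*(\beta)-p}{p\,p_s^*(\beta)}\bigr)\bigl(\|u\|^p / A_\beta(u)^{p/p_s^*(\beta)}\bigr)^{p_s^*(\beta)/(p_s^*(\beta)-p)}$, with the symmetric bound using the $\beta$-term dropped instead.

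Comparing with~\eqref{c*}, I see that $c_*$ is exactly the minimum of the two quantities obtained by replacing $\|u\|^p / A_\gamma(u)^{p/p_s^*(\gamma)}$ by its infimum $1/K(\mu,\gamma)$ over $u \in D^{s,p}(\mathbb{R}^N)\setminus\{0\}$, for $\gamma \in \{\alpha,\beta\}$. So the natural strategy is: take $u$ to be (close to) an extremal for the Hardy--Sobolev quotient defining $K(\mu,\beta)$ — this is where Theorem~\ref{teo2} enters, guaranteeing such an extremal $u_\beta$ exists. For this choice one has $\|u_\beta\|^p / A_\beta(u_\beta)^{p/p_s^*(\beta)} = 1/K(\mu,\beta)$, so the first bound above gives $\sup_{t>0}\Phi(tu_\beta) \leqslant \bigl(\frac{p_s^*(\beta)-p}{p\,p_s^*(\beta)}\bigr)K(\mu,\beta)^{-p_s^*(\beta)/(p_s^*(\beta)-p)}$. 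If the minimum defining $c_*$ is achieved by the $\beta$-term this already looks borderline; the point is that the inequality $\sup_{t>0}g(t) \leqslant \sup_{t>0}(\text{first two terms only})$ is \emph{strict} because at $t = t_{\max}$ the discarded term $\frac{t^{p_s^*(\alpha)}}{p_s^*(\alpha)}A_\alpha(u_\beta)$ is strictly positive (as $u_\beta \not\equiv 0$ and $A_\alpha(u_\beta) > 0$). Hence $\sup_{t>0}\Phi(tu_\beta) < \bigl(\frac{p_s^*(\beta)-p}{p\,p_s^*(\beta)}\bigr)K(\mu,\beta)^{-p_s^*(\beta)/(p_s^*(\beta)-p)}$. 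The remaining task is to also beat the $\alpha$-term in the minimum: here one argues symmetrically, or observes that whichever of the two terms realizes the minimum $c_*$, one chooses the corresponding extremal $u_\gamma$, and the strictly positive contribution of the \emph{other} critical nonlinearity forces $\sup_{t>0}\Phi(tu_\gamma) < c_*$. Combining with $c_u \leqslant \sup_{t>0}\Phi(tu_\gamma)$ and the already-established lower bound $c_u \geqslant \lambda > 0$ from Lemma~\ref{lem:functionalmpt} yields $0 < c_u < c_*$.

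The main obstacle I anticipate is making the "strictly positive contribution" argument fully rigorous and uniform: one must verify that the extremal $u_\gamma$ from Theorem~\ref{teo2} indeed has $A_\delta(u_\gamma) > 0$ for the other exponent $\delta \neq \gamma$ (immediate, since $u_\gamma \in D^{s,p}(\mathbb{R}^N)\setminus\{0\}$ and the weighted integrand is nonnegative and not a.e.\ zero), and that the loss in the supremum is genuinely strict rather than merely non-strict — this requires noting that the maximizer $t_{\max}(u_\gamma)$ of the full $g$ is a strictly positive, finite number, so the omitted term evaluated there is a strictly positive constant. A secondary subtlety is bookkeeping the case $\mu = 0$ versus $\mu \in (0,\mu_H)$, but since $\|\cdot\|$ is an equivalent norm on $D^{s,p}(\mathbb{R}^N)$ by~\eqref{eq4fp} in either case and $K(\mu,\gamma)$ is finite and positive throughout the stated parameter range, no separate treatment is needed. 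No concentration-compactness or delicate asymptotic expansion (of the type one needs when the two exponents interact, e.g.\ in~\cite{MR2498753}) is required at this stage — that difficulty is deferred to the analysis of the Palais--Smale sequences in section~\ref{sec:structureps}.
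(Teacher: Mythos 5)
Your proposal is correct and takes essentially the same route as the paper: pick a test function $u$ that is an extremal for one of the Hardy--Sobolev quotients (via Theorem~\ref{teo2}), bound $c_u$ by $\sup_{t>0}\Phi(tu)$, drop the competing critical term to get a quantity whose supremum equals one of the two expressions in~\eqref{c*}, and observe that strict inequality follows because the discarded term is strictly positive at the (finite, nonzero) maximizer. One small point in your favor: the paper ends with ``Similarly, we get $c_u < (\tfrac{1}{p}-\tfrac{1}{p_s^*(\beta)})K(\mu,\beta)^{-(N-\beta)/(sp-\beta)}$,'' which reads as if the same $u$ (extremal for $K(\mu,\alpha)$) beats both terms of the minimum --- that is not justified as written; your version, which selects $u_\gamma$ as the extremal corresponding to whichever $\gamma\in\{\alpha,\beta\}$ actually realizes the minimum in $c_*$, is the clean way to close the argument, and your direct observation that $\sup_t\Phi(tu_\gamma)<\sup_t f_\gamma(t)$ is slightly simpler than the paper's proof-by-contradiction of the same strictness.
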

\begin{proof}
By hypothesis on the parameters $\mu$ and $\alpha$, 
we can consider a function 
$u \in D^{s,p}(\mathbb{R}^N)\setminus \{0\}$ 
for which $1/K(\mu,\alpha)$ is attained; see Theorem~\ref{teo2}. By definition of $t_u$ and by the fact that $c_u>0$, we get
\begin{align*}
0 < c_u \leqslant \displaystyle\sup_{t\geqslant 0}\Phi (tu)\leqslant\displaystyle\sup_{t\geqslant 0}f(t), 
\end{align*}
where $f \colon \mathbb{R}_+ \to \mathbb{R}$ is defined by
\begin{align*}
f(t) & \coloneqq 
\dfrac{t^p\Vert u\Vert^p}{p}-\dfrac{t^{p_s^*(\alpha)}}{p_s^*(\alpha)}\displaystyle\int_{\mathbb{R}^N}\dfrac{|u|^{p_s^*(\beta)}}{\vert x \vert^{\alpha}}\,dx.
\end{align*}
Note that the supremum of the function $f$ is attained in 
$t_u \in \mathbb{R}_+$
such that
\begin{align*}
f(t_u) & = \dfrac{sp-\alpha}{p(N-\alpha)}
K(\mu ,\alpha)^{\frac{-(N-\alpha)}{sp-\beta}}.
\end{align*} 
We deduce that 
$0 < c_u \leqslant 
\left(\frac{1}{p}- \frac{1}{p^*_s(\alpha)}\right)
K(\mu,\alpha)^{\frac{-(N-\alpha)}{sp-\alpha}}$, 
since $u$ attains the constant $1/K(\mu,\alpha)$. 

Note that the inequality is strict, that is, the equality does not occur. Indeed, suppose that it is valid the equality; therefore, we have
\begin{align*}
0< c_u = \displaystyle\sup_{t\geqslant 0} \Phi (tu)
= \displaystyle\sup_{t\geqslant 0}f(t).
\end{align*}
Consider two positive real numbers $t_1,t_2 \in \mathbb{R}_+$ where two extrema are attained. Then
\begin{align*}
f(t_1) -\frac{t_1^{p_s^*(\beta)}}{p_s^*(\beta)}
\displaystyle\int_{\mathbb{R}^N}
\frac{|u|^{p_s^*(\beta)}}{|x|^{\beta}} \,dx 
= f(t_2).
\end{align*}
This implies that
$ f(t_1)>f(t_2)$
since $u\in D^{s,p}(\mathbb{R}^N)\setminus\{0\}$
and $t>0$. 
In this way we get a contradiction, 
for $\displaystyle\sup_{t\geqslant 0}f(t)$
is attained at $t_2>0$. 

Similarly, we get 
$c_u < \left(\frac{1}{p}- \frac{1}{p^*_s(\beta)}\right)
K(\mu,\beta)^{\frac{-(N-\beta)}{sp-\beta}}$.

The lemma is proved.
\end{proof}

\section{The structure of the Palais-Smale sequences}
\label{sec:structureps}
In this section we consider the parameter
$\alpha\in (0,sp)$. 
\begin{prop}
Let $\{u_n\}_n\subset D^{s,p}(\mathbb{R}^N)$ be a 
Palais-Smale sequence $(PS)_c$ for the energy functional $\Phi$ at the level $c\in (0,c_*)$, as defined in 
Proposition~\ref{prop1}.
Suppose that 
$ u_n \rightharpoonup 0$ weakly in $D^{s,p}(\mathbb{R}^N)$ 
as $n\rightarrow +\infty$. 
Then there exists a positive constant 
$\varepsilon_0 = \varepsilon_0(N,p,\mu,\alpha,s,c) $ 
such that for every $\delta > 0$ one of the following limits 
is valid, 
\begin{alignat}{2}
\displaystyle\lim_{n\rightarrow +\infty}
\int_{B_\delta(0)}
\frac{|u_n|^{p_s^*(\alpha)}}{\vert x\vert^{\alpha}}\,dx
& = 0
& \quad \text{or} \quad 
\displaystyle\lim_{n\rightarrow +\infty}
\int_{B_\delta(0)}
\frac{|u_n|^{p_s^*\alpha}}{\vert x\vert^{\alpha}}\,dx
& \geqslant \varepsilon_0.
\label{def.epsilon0}
\end{alignat}
\label{2}  
\end{prop}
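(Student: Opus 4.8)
The plan is to combine the usual Palais--Smale bookkeeping with a \emph{sharp} localization estimate at the origin and the Hardy--Sobolev inequality defining $K(\mu,\cdot)$; throughout one passes, if necessary, to a subsequence along which all the limits named below exist.

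First I would record that every $(PS)_c$ sequence is bounded: with $\theta:=\min\{p^*_s(\alpha),p^*_s(\beta)\}>p$ one has $\Phi(u_n)-\tfrac1\theta\langle\Phi'(u_n),u_n\rangle\ge(\tfrac1p-\tfrac1\theta)\|u_n\|^p$ while the left-hand side equals $c+o(1)+o(\|u_n\|)$, so $\sup_n\|u_n\|<\infty$, and by~\eqref{eq4fp} also $\sup_n[u_n]_{s,p}<\infty$; in particular $\int_{\mathbb{R}^N}|u_n|^{p^*_s(\alpha)}|x|^{-\alpha}\,dx$ and $\int_{\mathbb{R}^N}|u_n|^{p^*_s(\beta)}|x|^{-\beta}\,dx$ stay bounded, and $\sup_n\|u_n\|_{L^{p^*_s}}<\infty$. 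Since $p^*_s(\alpha),p^*_s(\beta)<p^*_s$, the restriction $D^{s,p}(\mathbb{R}^N)\to W^{s,p}(\omega)$ composed with the compact embedding $W^{s,p}(\omega)\hookrightarrow L^q(\omega)$, $q<p^*_s$, shows $u_n\to0$ strongly in $L^p(\omega)$, $L^{p^*_s(\alpha)}(\omega)$ and $L^{p^*_s(\beta)}(\omega)$ for every bounded Lipschitz $\omega$. Setting $Q_n(r):=\int_{B_r(0)}|u_n|^{p^*_s(\alpha)}|x|^{-\alpha}\,dx$ and $R_n(r):=\int_{B_r(0)}|u_n|^{p^*_s(\beta)}|x|^{-\beta}\,dx$, the inequality $Q_n(\rho)-Q_n(r)\le r^{-\alpha}\int_{B_\rho\setminus B_r}|u_n|^{p^*_s(\alpha)}\,dx\to0$ (for $0<r<\rho$) makes $\ell:=\lim_nQ_n(r)$ and $\ell_\beta:=\lim_nR_n(r)$ \emph{independent of $r>0$}, while $A:=\lim_n\int_{\mathbb{R}^N}|u_n|^{p^*_s(\alpha)}|x|^{-\alpha}\,dx\ge\ell$ and $B:=\lim_n\int_{\mathbb{R}^N}|u_n|^{p^*_s(\beta)}|x|^{-\beta}\,dx\ge\ell_\beta$. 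The relation $\Phi(u_n)-\tfrac1p\langle\Phi'(u_n),u_n\rangle\to c$ then gives the energy identity $c=(\tfrac1p-\tfrac1{p^*_s(\beta)})B+(\tfrac1p-\tfrac1{p^*_s(\alpha)})A$, and since the coefficients are positive and $c<c_*$ (see~\eqref{c*}), this already yields the a priori bounds $\ell<t_\alpha:=K(\mu,\alpha)^{-p^*_s(\alpha)/(p^*_s(\alpha)-p)}$ and $\ell_\beta<t_\beta:=K(\mu,\beta)^{-p^*_s(\beta)/(p^*_s(\beta)-p)}$.

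Then I would fix $\delta>0$ and a cutoff $\eta=\eta_\delta\in C_0^\infty(B_{2\delta})$ with $0\le\eta\le1$, $\eta\equiv1$ on $B_\delta$, and evaluate $\langle\Phi'(u_n),\eta^pu_n\rangle$, which tends to $0$ because $\{\eta^pu_n\}_n$ is bounded in $D^{s,p}(\mathbb{R}^N)$. The crucial step is the identity $\langle(-\Delta_p)^su_n,\eta^pu_n\rangle=[\eta u_n]_{s,p}^p+o(1)$: by the pointwise algebraic inequalities for $J_p$ of \cite{MR3461371,MR3667062,MR3732174}, the difference of the two integrands is controlled, for every $\varepsilon>0$, by $\varepsilon$ times the $p$-energy density of $u_n$ plus $C_\varepsilon\,(|u_n(x)|^p+|u_n(y)|^p)|\eta(x)-\eta(y)|^p|x-y|^{-N-ps}$; upon integration the first contribution is $\le\varepsilon\,C\sup_n[u_n]_{s,p}^p$, whereas $\iint(|u_n(x)|^p+|u_n(y)|^p)|\eta(x)-\eta(y)|^p|x-y|^{-N-ps}\,dx\,dy\to0$ (decompose $\mathbb{R}^N$ into a large ball, where the above Rellich convergence applies, and its complement, where the effective weight $\int|\eta(x)-\eta(y)|^p|x-y|^{-N-ps}\,dx$ decays like $|y|^{-N-ps}$ so that a Hölder estimate against the bounded $L^{p^*_s}$-norms makes the tail uniformly small), and then $n\to\infty$ followed by $\varepsilon\to0$ proves the identity. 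Since the Hardy term occurs with the same sign in $\langle(-\Delta_p)^su_n,\eta^pu_n\rangle$ and in $\|\eta u_n\|^p$, it cancels when one substitutes into $\langle\Phi'(u_n),\eta^pu_n\rangle=o(1)$, leaving
\[
\|\eta_\delta u_n\|^p=\int_{\mathbb{R}^N}\eta_\delta^p\,\frac{|u_n|^{p^*_s(\beta)}}{|x|^{\beta}}\,dx+\int_{\mathbb{R}^N}\eta_\delta^p\,\frac{|u_n|^{p^*_s(\alpha)}}{|x|^{\alpha}}\,dx+o(1)\le R_n(2\delta)+Q_n(2\delta)+o(1),
\]
whence $\limsup_n\|\eta_\delta u_n\|^p\le\ell+\ell_\beta$. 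Feeding this into the Hardy--Sobolev inequality $(\int_{\mathbb{R}^N}|v|^{p^*_s(\alpha)}|x|^{-\alpha}\,dx)^{p/p^*_s(\alpha)}\le K(\mu,\alpha)\|v\|^p$ from~\eqref{6}, applied to $v=\eta_\delta u_n$ and using $\eta_\delta\equiv1$ on $B_\delta$, gives $Q_n(\delta)\le K(\mu,\alpha)^{p^*_s(\alpha)/p}\|\eta_\delta u_n\|^{p^*_s(\alpha)}$ and likewise $R_n(\delta)\le K(\mu,\beta)^{p^*_s(\beta)/p}\|\eta_\delta u_n\|^{p^*_s(\beta)}$, hence in the limit
\begin{equation}\label{eq:twoineq}
\ell^{\,p/p^*_s(\alpha)}\le K(\mu,\alpha)(\ell+\ell_\beta),\qquad\ell_\beta^{\,p/p^*_s(\beta)}\le K(\mu,\beta)(\ell+\ell_\beta).
\end{equation}

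Finally, since $\ell$ is independent of $\delta$, it suffices to prove $\ell=0$ or $\ell\ge\varepsilon_0$. Assume $\ell>0$. If $\ell_\beta=0$, the first inequality in~\eqref{eq:twoineq} forces $\ell\ge t_\alpha$, contradicting the a priori bound; so $\ell_\beta>0$. Both inequalities in~\eqref{eq:twoineq} then give $S:=\ell+\ell_\beta\le(K(\mu,\alpha)S)^{p^*_s(\alpha)/p}+(K(\mu,\beta)S)^{p^*_s(\beta)/p}$, and because both exponents exceed $1$ this forces $S\ge S_0$ for an explicit $S_0=S_0(N,p,\mu,\alpha,\beta,s)>0$. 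Now suppose $0<\ell<\varepsilon_0$: the energy identity gives $\ell_\beta\le c/(\tfrac1p-\tfrac1{p^*_s(\beta)})<t_\beta$ and $\ell_\beta=S-\ell>S_0-\varepsilon_0$, so once $\varepsilon_0\le S_0/2$ the number $\ell_\beta$ lies in the compact set $I:=[S_0/2,\,c/(\tfrac1p-\tfrac1{p^*_s(\beta)})]\subset(0,t_\beta)$; on $I$ the continuous map $y\mapsto y^{p/p^*_s(\beta)}-K(\mu,\beta)y$ is bounded below by some $m_0>0$ (since $y^{p/p^*_s(\beta)-1}>t_\beta^{p/p^*_s(\beta)-1}=K(\mu,\beta)$ on $I$), while the second inequality in~\eqref{eq:twoineq} says its value at $y=\ell_\beta$ is $\le K(\mu,\beta)\ell<K(\mu,\beta)\varepsilon_0$ — impossible once $\varepsilon_0\le m_0/K(\mu,\beta)$. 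Thus $\ell\ge\varepsilon_0:=\min\{S_0/2,\,m_0/K(\mu,\beta)\}$, which depends only on $N,p,\mu,\alpha,\beta,s$ and $c$, and this is the asserted dichotomy. The heart of the argument — and its main obstacle — is precisely the \emph{sharp} localization identity $\langle(-\Delta_p)^su_n,\eta_\delta^pu_n\rangle=[\eta_\delta u_n]_{s,p}^p+o(1)$ with constant exactly $1$: only then do the inequalities~\eqref{eq:twoineq} carry the very constants $K(\mu,\cdot)$ entering~\eqref{c*}, which is what makes the a priori bounds and the final comparison close; controlling the nonlocal cutoff/commutator errors — especially the tail at infinity, where no compact embedding is available — is the delicate point and is where the estimates of \cite{MR3461371,MR3667062,MR3732174} are needed, the secondary difficulty being the asymptotic coupling of $\ell$ and $\ell_\beta$, resolved through~\eqref{eq:twoineq} and the strict inequality $c<c_*$.
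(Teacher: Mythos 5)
Your proposal is correct and follows essentially the same strategy as the paper: localize the energy to small balls around the origin via a cutoff, use the Rellich compactness away from the origin to kill tails and cross terms, and feed the resulting inequalities into the Hardy--Sobolev constants from~\eqref{6} and the strict bound $c<c_*$. Your inequalities~\eqref{eq:twoineq} are precisely the combination of the paper's Lemmas~\ref{lem:desig.envolvendo.lambda} and~\ref{lem:desig:gammalambdaxi}, with your $\ell,\ell_\beta$ and $\limsup_n\Vert\eta_\delta u_n\Vert^p$ playing the roles of $\lambda,\xi,\gamma$. The one genuine departure is in the final dichotomy: the paper derives the pair $\lambda^{p/p_s^*(\alpha)}\le\delta_1\xi$, $\xi^{p/p_s^*(\beta)}\le\delta_2\lambda$ and then asserts the existence of $\varepsilon_0$ rather tersely, whereas you close the argument by first showing $\ell_\beta>0$ whenever $\ell>0$, then extracting the lower bound $S=\ell+\ell_\beta\ge S_0$ from the sum inequality, and finally trapping $\ell_\beta$ in a compact subinterval of $(0,t_\beta)$ on which $y\mapsto y^{p/p_s^*(\beta)}-K(\mu,\beta)y$ is bounded away from zero. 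This is a slightly different route to the same quantitative conclusion and, if anything, makes the source of $\varepsilon_0$ more transparent; it also exposes cleanly the dependence of $\varepsilon_0$ on $\beta$, which the paper's statement omits from the parameter list but which is present in both proofs. A small stylistic point: your phrase that the Hardy term ``cancels'' is really the observation that $\mu\int|x|^{-ps}\eta^p|u_n|^p=\mu\int|x|^{-ps}|\eta u_n|^p$, so it is absorbed exactly into $\Vert\eta u_n\Vert^p$ rather than cancelling; this is what you use and it is correct.
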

The proof of Proposition~\ref{2} uses the three lemmas and one remark.
\begin{lem}
Let $\{u_n\}_n\subset D^{s,p}(\mathbb{R}^N)$ 
be a Palais-Smale sequence 
$(PS)_c$ for the energy functional $\Phi$ as defined in Proposição~\ref{2}. 
If $u_n\rightharpoonup 0$ weakly in $D^{s,p}(\mathbb{R}^N)$ 
as $n \rightarrow +\infty$, then for every compact subset 
$\omega \Subset \mathbb{R}^N\setminus\{0\}$, 
up to a subsequence, the following limits are valid,
\begin{alignat}{2}
&\displaystyle\lim_{n \rightarrow +\infty}
\int_{\omega}\frac{|u_n|^p}{|x|^{ps}} \,dx=
\lim_{n \rightarrow +\infty}\int_{\omega}\frac{|u_n|^{p_s^*(\alpha)}}{|x|^{\alpha}}\, dx 
 = 0,
\label{10}\\
&\displaystyle\lim_{n\rightarrow +\infty}
\int_{\omega}\dfrac{|u_n|^{p_s^*(\beta)}}{\vert x \vert^{\beta}}\,dx=
\lim_{n\rightarrow +\infty}\int_{\omega}
\int_{\omega}\dfrac{|u_n(x)-u_n(y)|^p}{|x-y|^{N+sp}}
\,dx\,dy
 = 0.\label{11}
\end{alignat}
\label{lema.ref.af.3.1}
\end{lem}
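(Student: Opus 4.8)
The plan is to exploit the hypothesis $u_n \rightharpoonup 0$ in $D^{s,p}(\mathbb{R}^N)$ together with the compactness of the Hardy--Sobolev embeddings on bounded domains \emph{away from the origin}. First I would fix a compact set $\omega \Subset \mathbb{R}^N \setminus \{0\}$ and choose a bounded open set $\Omega$ with smooth boundary such that $\omega \Subset \Omega \Subset \mathbb{R}^N \setminus \{0\}$, say $\Omega \subset B_R(0) \setminus B_\rho(0)$ for suitable $0 < \rho < R$. Since $\{u_n\}$ is a $(PS)_c$ sequence for $\Phi$, a standard computation using $\langle \Phi'(u_n), u_n\rangle = o(\|u_n\|)$ and $\Phi(u_n) \to c$ shows that $\|u_n\|$ is bounded; by the comparison~\eqref{eq4fp} the Gagliardo seminorms $[u_n]_{s,p}$ are bounded as well, so, passing to a subsequence, $u_n \rightharpoonup 0$ in $D^{s,p}(\mathbb{R}^N)$ and (by the local compact embedding $W^{s,p}(\Omega) \hookrightarrow\hookrightarrow L^q(\Omega)$ for $q < p_s^*$, together with restriction $D^{s,p}(\mathbb{R}^N) \to W^{s,p}(\Omega)$) we may assume $u_n \to 0$ strongly in $L^q(\Omega)$ for every $q \in [1, p_s^*)$ and a.e. in $\Omega$.

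Next I would deduce~\eqref{10} and the first limit in~\eqref{11}. On $\omega$ the weights $|x|^{-ps}$, $|x|^{-\alpha}$, $|x|^{-\beta}$ are all bounded above by $\rho^{-ps}$, $\rho^{-\alpha}$, $\rho^{-\beta}$ respectively, since $\omega$ is bounded away from the origin. Hence
\begin{align*}
\int_\omega \frac{|u_n|^p}{|x|^{ps}}\,dx \leqslant \rho^{-ps} \int_\omega |u_n|^p\,dx \longrightarrow 0,
\end{align*}
because $p < p_s^*$ so the strong $L^p(\omega)$ convergence applies. For the two critical terms the exponents $p_s^*(\alpha)$ and $p_s^*(\beta)$ are \emph{strictly} less than $p_s^* = p_s^*(0)$ (as $\alpha, \beta > 0$), so again $u_n \to 0$ strongly in $L^{p_s^*(\alpha)}(\omega)$ and in $L^{p_s^*(\beta)}(\omega)$, and the bounded weights give
\begin{align*}
\int_\omega \frac{|u_n|^{p_s^*(\alpha)}}{|x|^{\alpha}}\,dx \leqslant \rho^{-\alpha} \int_\omega |u_n|^{p_s^*(\alpha)}\,dx \to 0, \qquad \int_\omega \frac{|u_n|^{p_s^*(\beta)}}{|x|^{\beta}}\,dx \leqslant \rho^{-\beta} \int_\omega |u_n|^{p_s^*(\beta)}\,dx \to 0.
\end{align*}
This is the easy half; the genuinely local (tail-free) Gagliardo energy requires more care.

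The main obstacle is the last limit in~\eqref{11}, $\int_\omega\int_\omega |u_n(x)-u_n(y)|^p |x-y|^{-N-sp}\,dx\,dy \to 0$, since the Gagliardo energy does not localize as cleanly as a Lebesgue norm and the $(PS)$ condition controls only the \emph{full} nonlocal form. My plan here is to test $\Phi'(u_n)$ against $\varphi_n := \zeta^p u_n$ (or a suitable variant) where $\zeta \in C_c^\infty(\mathbb{R}^N \setminus \{0\})$ is a cutoff with $\zeta \equiv 1$ on $\omega$ and $\operatorname{supp}\zeta \Subset \Omega$. Expanding $\langle \Phi'(u_n), \zeta^p u_n\rangle = o(1)$, the leading term is $\int\int J_p(u_n(x)-u_n(y))(\zeta(x)^p u_n(x) - \zeta(y)^p u_n(y)) |x-y|^{-N-sp}$, which by an algebraic inequality of the type $J_p(a-b)(\zeta_a^p a - \zeta_b^p b) \geqslant c_p |\zeta_a(a-b)|^p \cdot(\text{localized}) - C_p(\text{error in }|\nabla\zeta|)$ (the fractional Leibniz/commutator estimate, cf. the estimates of Brasco--Mosconi--Squassina~\cite{MR3461371} and Xiang--Zhang--Zhang~\cite{MR3667062}) bounds a localized Gagliardo energy $\int\int \zeta(x)^p|u_n(x)-u_n(y)|^p|x-y|^{-N-sp}$ from below, modulo error terms that are integrals of $|u_n|^p$ against an $L^\infty$ kernel supported in $\Omega$ — these errors vanish by the $L^p_{\mathrm{loc}}$ convergence already established — plus the Hardy term $\mu\int \zeta^p |u_n|^p/|x|^{ps}$ and the two critical terms, all of which tend to $0$ by the limits proved above (the weights being bounded on $\operatorname{supp}\zeta$). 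Since $\zeta \equiv 1$ on $\omega$, this localized energy dominates $\int_\omega\int_\omega|u_n(x)-u_n(y)|^p|x-y|^{-N-sp}\,dx\,dy$, which therefore tends to $0$. The delicate point to get right is the fractional commutator estimate controlling $[\zeta^p u_n]$-type quantities by $[u_n]$-type quantities plus lower-order $L^p$ remainders; this is where I would invoke the cited estimates rather than re-derive them, and where the boundedness of $\{u_n\}$ in $D^{s,p}(\mathbb{R}^N)$ is essential to absorb the cross terms involving the complement $\mathbb{R}^N \setminus \operatorname{supp}\zeta$.
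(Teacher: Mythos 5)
Your strategy matches the paper's: the easy part (both limits in~\eqref{10} and the first in~\eqref{11}) via bounded weights on $\omega$ plus compact local embeddings, and the hard part via testing $\Phi'(u_n)$ against $\eta^p u_n$ and extracting the localized Gagliardo energy up to a nonlocal commutator error. However, your treatment of that error has a genuine gap.

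Concretely, after the algebraic splitting the cross term reduces (via H\"older in $(x,y)$) to
\[
[u_n]_{s,p}^{p-1}\left(\int_{\mathbb{R}^N}|u_n(y)|^p\, D^s\eta^p(y)\,dy\right)^{1/p},
\qquad
D^s\eta^p(y)\coloneqq\int_{\mathbb{R}^N}\frac{|\eta^p(x)-\eta^p(y)|^p}{|x-y|^{N+sp}}\,dx.
\]
You assert this is ``an integral of $|u_n|^p$ against an $L^\infty$ kernel supported in $\Omega$'' and conclude it vanishes by $L^p_{\mathrm{loc}}$ convergence. But $D^s\eta^p$ is \emph{not} compactly supported: for $y$ outside $\operatorname{supp}\eta$ one has $D^s\eta^p(y)=\int_{\operatorname{supp}\eta}|\eta^p(x)|^p|x-y|^{-N-sp}\,dx$, which decays like $|y|^{-(N+sp)}$ but is positive everywhere. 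Since $u_n$ is only \emph{bounded} in $L^{p_s^*}(\mathbb{R}^N)$ and does not converge strongly on all of $\mathbb{R}^N$, local $L^p$ convergence controls only $\int_{B_R}|u_n|^p D^s\eta^p$; the tail $\int_{\mathbb{R}^N\setminus B_R}|u_n|^p D^s\eta^p$ needs a separate argument. The paper handles this by H\"older with exponents $p_s^*/p$ and $N/(sp)$, using the boundedness of $\|u_n\|_{L^{p_s^*}(\mathbb{R}^N)}$ together with the quantitative estimate $\|D^s\eta^p\|_{L^{N/(sp)}(\mathbb{R}^N\setminus B_R(0))}\to 0$ as $R\to+\infty$. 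Your closing remark that boundedness of $[u_n]_{s,p}$ ``absorbs the cross terms involving the complement'' points vaguely in the right direction, but as written the key $L^{N/(sp)}$ tail estimate is missing and the earlier claim that the kernel is supported in $\Omega$ is incorrect. A secondary point: you phrase the key step as a one-sided pointwise inequality for $J_p(a-b)(\zeta_a^p a-\zeta_b^p b)$; the cleaner route (taken by the paper) is the exact identity $J_p(a-b)(\zeta_a^p a - \zeta_b^p b)=\zeta_a^p|a-b|^p + J_p(a-b)(\zeta_a^p-\zeta_b^p)\,b$, followed by H\"older on the second summand, and then a second elementary inequality $\bigl||X+Y|^p-|X|^p\bigr|\leqslant C_p(|X|^{p-1}+|Y|^{p-1})|Y|$ to compare $[\eta u_n]_{s,p}^p$ with the weighted integral $\int\int \eta^p(x)|u_n(x)-u_n(y)|^p|x-y|^{-N-sp}$; both steps again produce the same $\int|u_n|^p D^s\eta^p$-type remainder.
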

\begin{proof}
We consider a fixed compact subset 
$\omega\Subset\mathbb{R}^N\setminus\{0\}$. 
It is well known that the embedding 
$D^{s,p}(\mathbb{R}^N)\hookrightarrow L^q(\omega)$ is compact for the parameter $q$ in the interval 
$1 \leqslant q < p_s^*$,
where $p_s^*=\frac{Np}{N-sp}$ is the critical Sobolev exponent for the embedding; see Di Nezza, Palatucci and Valdinoci~\cite{MR2944369}. 
Note tha both
$|x|^{-sp}$ and $|x|^{-\alpha}$ are bounded in the subset 
$\omega$. Therefore, the limits in~\eqref{10} follow from the compact embedding, since by hypothesis 
$u_n\rightharpoonup 0$ weakly in $D^{s,p}(\mathbb{R}^N)$
as $n\rightarrow+\infty$ 
and we also have $ p < p_s^*(\alpha) < p_s^*$ 
due to the condition $\alpha\in (0,sp)$. 

Now we consider the limits in~\eqref{11}. 
Let $\displaystyle\eta \in C_0^{\infty}(\mathbb{R}^N;\mathbb{R})$ be a cut-off function such that
$\operatorname{supp}(\eta) 
\Subset \mathbb{R}^N \setminus\{0\}$ 
with $ 0 \leqslant \eta \leqslant 1$ and 
$\eta|_{\omega} \equiv 1$. 
Using a result in Brasco, Squassina and 
Yang~\cite[Lemma~A.1]{MR3732174} 
we deduce that $\eta^pu_n\in D^{s,p}(\mathbb{R}^N)$
$n \in \mathbb{N}$. 
So,
\begin{align}
\langle \Phi' (u_n),\eta^pu_n\rangle
& = o(\Vert \eta^p u_n\Vert )=o(\Vert u_n\Vert)=o_n(1)
\quad \text{as } n\rightarrow +\infty,\label{12}
\end{align}
since the sequence $\{u_n\}_n \subset D^{s,p}(\mathbb{R}^N)$
is bounded by the fact that
$\{u_n\}_n$ converges weakly to zero in 
$D^{s,p}(\mathbb{R}^N)$. 
In this way, from the estimates~\eqref{12} it follows that
\begin{align}
o_n(1) &=\langle \Phi' (u_n),\eta^pu_n\rangle \nonumber \\        
       &=\displaystyle\int_{\mathbb{R}^{N}}\int_{\mathbb{R}^{N}}
       \dfrac{J_{p}(u_n(x)-u_n(y))(\eta^p(x)u_n(x)-\eta^p(y)u_n(y))}{|x-y|^{N+sp}}\,dx\,dy \nonumber \\       
       &\qquad - \displaystyle\int_{\mathbb{R}^N}
       \dfrac{J_{p_s^*(\beta)}\eta^p u_n}
             {\vert x\vert^{\beta}}dx
       -\displaystyle\int_{\mathbb{R}^N}
       \dfrac{J_{p_s^*(\alpha)} \eta^p u_n}
             {|x|^{\alpha}}\,dx. 
\label{eqphi}
\end{align}
Note that both $|x|^{-\alpha}$ and $\vert x\vert^{-\beta}$
are bounded in  
$\operatorname{supp}(\eta) 
\Subset\mathbb{R}^N\setminus\{0\}$. 
Since $u_n\rightharpoonup 0$ weakly in 
$D^{s,p}(\mathbb{R}^N)$ as $n \rightarrow+\infty$
and since $D^{s,p}(\mathbb{R}^N)$ is compactly embedded in both 
$L^{p_s^*(\alpha)}_{\operatorname{loc}}(\mathbb{R}^N)$ and 
$L^{p_s^*(\beta)}_{\operatorname{loc}}(\mathbb{R}^N)$, we get 
\begin{align*}
\displaystyle\int_{\mathbb{R}^N}
\dfrac{J_{p_s^*(\alpha)}(u_n)\eta^pu_n}{|x|^{\alpha}}\,dx 
& = \displaystyle\int_{\mathbb{R}^N}\dfrac{|u_n|^{p_s^{*}(\alpha)}\eta^p}{|x|^{\alpha}}\,dx 
\rightarrow 0 \quad\mbox{as } n\rightarrow +\infty.
\shortintertext{and}
\displaystyle\int_{\mathbb{R}^N}
\dfrac{J_{p_s^*(\beta)}(u_n)\eta^pu_n}{|x|^{\beta}}\,dx 
& = \displaystyle\int_{\mathbb{R}^N}
\dfrac{|u_n|^{p_s^{*}(\beta)}\eta^p}{|x|^{\beta}}\,dx \rightarrow 0 \quad \mbox{as } n\rightarrow +\infty.
\end{align*}
Therefore, from the estimate~\eqref{eqphi} we obtain
\begin{align}
o_n(1) 
& = \displaystyle\int_{\mathbb{R}^{N}}\int_{\mathbb{R}^{N}}
\frac{J_p(u_n(x)-u_n(y))
\left(\eta^p(x)u_n(x)-\eta^p(y)u_n(y)\right)}{|x-y|^{N+sp}}\,dx\,dy \nonumber\\  
& = \displaystyle\int_{\mathbb{R}^N}\int_{\mathbb{R}^{N}}
    \frac{J_p(u_n(x)-u_n(y))
     \left(\eta^p(x)-\eta^p(y)\right) 
     u_n(y)}{|x-y|^{N+sp}}\,dx\,dy\nonumber \\
& \qquad 
+ \displaystyle\int_{\mathbb{R}^N}\int_{\mathbb{R}^{N}}
  \frac{\eta^p(x)|u_n(x)-u_n(y)|^p}{|x-y|^{N+sp}}\,dx\,dy. \label{seqps}
\end{align}
To proceed further,
we define the functions 
\begin{align*}
D^sf(y) \coloneqq \displaystyle\int_{\mathbb{R}^N}\frac{|f(x)-f(y)|^p}{|x-y|^{N+sp}}dx
\end{align*}
and also
\begin{align*}
I & \coloneqq \displaystyle\int_{\mathbb{R}^N}\int_{\mathbb{R}^{N}}
\frac{J_p(u_n(x)-u_n(y))
\left(\eta^p(x)-\eta^p(y)\right) u_n(y)}{|x-y|^{N+sp}}\,dx\,dy.
\end{align*}

Our goal now is to show that 
$I\rightarrow 0$ as $n\rightarrow+\infty$. 
To to this, we use the Hölder inequality and deduce that 
\begin{align}
I & \leqslant \displaystyle\int_{\mathbb{R}^{N}}\int_{\mathbb{R}^{N}}\frac{|u_n(x)-u_n(y)|^{p-1}|u_n(y)||\eta^p(x)-\eta^p(y)|}{|x-y|^{N+sp}}\,dx\,dy \nonumber \\
&\leqslant\displaystyle
[u_n]_{s,p}^{p-1}\left(\int_{\mathbb{R}^{N}}\int_{\mathbb{R}^{N}}\frac{|u_n(y)|^p|\eta^p(x)-\eta^p(y)|^p}{|x-y|^{N+sp}} \, dx \, dy\right)^{\frac{1}{p}} \nonumber \\
& \leqslant C\displaystyle\left(\int_{\mathbb{R}^N}|u_n(y)|^p|D^s\eta^p(y)|^p \,dy\right)^{\frac{1}{p}},
\label{estiI}
\end{align}
where to get the last inequality we used the fact that 
$u_n\rightharpoonup 0$ weakly in $D^{s,p}(\mathbb{R}^N)$ 
as $n\rightarrow+\infty$.
Therefore, the sequence $\{u_n\} \subset D^{s,p}(\mathbb{R}^N)$ is bounded. 

Now we show that
$D^s\eta^p(y) \in L^{\infty}(\mathbb{R}^N) $.
Indeed, for $y\in \mathbb{R}^N$ we have 
\begin{align*}
D^s\eta^p(y)
& = \displaystyle\int_{\mathbb{R}^N}
\frac{|\eta^p(x)-\eta^p(y)|^p}{|x-y|^{N+sp}}\,dx \\
& = \displaystyle\int_{\mathbb{R}^N\setminus B_r(y)}
    \frac{|\eta^p(x)-\eta^p(y)|^p}{|x-y|^{N+sp}}\,dx
   +\displaystyle\int_{B_r(y)}
   \frac{|\eta^p(x)-\eta^p(y)|^p}{|x-y|^{N+sp}}\,dx, 
\end{align*}
for every positive real number $r>0$. 

To estimate the first term we note that
since $\eta\in C^{\infty}_0(\mathbb{R}^N)$,
it follows that $\eta$ is a bounded function; 
thus, there exists a positive constant $C_1>0$ such that 
\begin{align*}
\displaystyle\int_{\mathbb{R}^N\setminus B_r(y)}\frac{|\eta^p(x)-\eta^p(y)|^p}{|x-y|^{N+sp}}\,dx 
& \leqslant C_1\displaystyle\int_{\mathbb{R}^N\setminus B_r(y)}\frac{1}{|x-y|^{N+sp}}\,dx \\
& \leqslant \tilde{C_1}\int_r^{\infty}t^{-1-sp}\,dt < + \infty.
\end{align*} 
To estimate the second term, we also use the fact that 
$\eta^p\in C^{\infty}_0(\mathbb{R}^N)$,
and the mean value inequality to deduce that
\begin{align*}
\displaystyle\int_{B_r(y)}\frac{|\eta^p(x)-\eta^p(y)|^p}{|x-y|^{N+sp}}dx 
& \leqslant \displaystyle\int_{B_r(y)}\frac{|\nabla(\eta^p)(x-y)|^p}{|x-y|^{N+sp}}\,dx\\
& \leqslant C\displaystyle\int_{B_r(y)}|x-y|^{p-(N+sp)}\,dx < +\infty,
\end{align*} 
where $C$ is an estimate for the growth of the gradient of 
$\eta^p$. 
Thus we can deduce from both estimates that
$D^s\eta^p\in L^{\infty}(\mathbb{R}^N)$. 
Returning to the analysis of inequality~\eqref{estiI}, we also have  
\begin{align}
\displaystyle\int_{\mathbb{R}^N}|u_n(y)|^p|D^s\eta^p(y)|\,dy \nonumber
& = \displaystyle\int_{B_R(0)}|u_n(y)|^p|D^s\eta^p(y)|\,dy \\
& \quad +\displaystyle\int_{\mathbb{R}^N\setminus B_R(0)}
    |u_n(y)|^p|D^s\eta^p(y)|\,dy,
    \label{estiDs}
\end{align}
for every fixed positive radius $R \in \mathbb{R}_+$ 
which will be defined below.

To estimate the first integral in~\eqref{estiDs} we recall that $u_n\rightharpoonup 0$ weakly in $D^{s,p}(\mathbb{R}^N)$; 
it follows that
$u_n \rightarrow 0$ strongly in $L^q_{\operatorname{loc}}(\mathbb{R}^N)$
for every $q\in[1,p_s^*)$, 
as $n \rightarrow +\infty$. 
As we have already seen, 
$D^s\eta^p\in L^{\infty}(\mathbb{R}^N)$;
moreover, $u_n\rightarrow 0$ in $L^{p}_{\operatorname{loc}}(\mathbb{R}^N)$
as $n\rightarrow+\infty$. 
Thus, it follows that for every positive real number 
$\varepsilon >0$ there exists $n_0\in\mathbb{N}$ such that
for $n\geqslant n_0$ it is valid the inequalities
\begin{align}
\displaystyle\int_{B_R(0)}|u_n(y)|^p|D^s\eta^p(y)|\,dy
\leqslant \Vert D^s\eta^p\Vert_{\infty}
\int_{B_R(0)}|u_n(y)|^p \,dy
< \frac{\varepsilon}{2}. \label{esti2}
\end{align}

Now we are going to estimate the second integral on the right hand side of the equality~\eqref{estiDs}. 
By using the Hölder inequality with exponents
$p^*_s/p$ and $N/sp$ we get
\begin{align*}
\int_{\mathbb{R}^N\setminus B_R(0)}|u_n(y)|^p|D^s\eta^p(y)|\,dy &\leqslant \Vert u_n\Vert^p_{L^{p_s^*}(\mathbb{R}^N)}\Vert D^s\eta^p\Vert_{L^{N/sp}(\mathbb{R}^N\setminus B_{R}(0))}\\
& \leqslant C \Vert D^s\eta^p\Vert_{L^{N/sp}(\mathbb{R}^N\setminus B_{R}(0))}.
\end{align*}
Because the sequence $\{u_n\}_n \subset D^{s,p}(\mathbb{R}^N)$ is bounded. By the definition of $D^s\eta^p$ we have
\begin{align*}
\displaystyle\int_{\mathbb{R}^N\setminus B_R(0)}|D^s\eta^p(y)|^{\frac{N}{sp}}\,dy
=\int_{\mathbb{R}^N\setminus B_R(0)}\left(\int_{\mathbb{R}^N}\frac{|\eta^p(x)-\eta^p(y)|^p}{|x-y|^{N+sp}}\,dx\right)^{\frac{N}{sp}}\,dy.
\end{align*}
By the fact that $\eta\in C^{\infty}_0(\mathbb{R}^N)$, there exists a positive real number $r>0$ such that $\operatorname{supp}(\eta)\subset B_r(0)$. So, we choose $R>r>0$ as a first condition on the constant $R$. In this way, we get
\begin{align*}
 & \displaystyle\int_{\mathbb{R}^N\setminus B_R(0)}\left(\int_{\mathbb{R}^N}\frac{|\eta^p(x)-\eta^p(y)|^p}{|x-y|^{N+sp}}\,dx\right)^{\frac{N}{sp}}\,dy \\
 & \quad = \displaystyle\int_{\mathbb{R}^N\setminus B_R(0)}\left(\int_{\mathbb{R}^N}\frac{|\eta^p(x)|^p}{|x-y|^{N+sp}}dx\right)^{\frac{N}{sp}}\,dy \\
& \quad = \displaystyle\int_{\mathbb{R}^N\setminus B_R(0)}\left(\int_{B_r(0)}\frac{|\eta^p(x)|^p}{|x-y|^{N+sp}}\,dx\right)^{\frac{N}{sp}}\,dy\\
& \quad \leqslant\displaystyle\int_{\mathbb{R}^N\setminus B_R(0)}\left(\int_{B_r(0)}\frac{|\eta^p(x)|^p}{(|y|-r)^{N+sp}}\,dx\right)^{\frac{N}{sp}}\,dy \\
& \quad \leqslant\displaystyle\left(\int_{B_r(0)}|\eta^p(x)|^pdx\right)^{\frac{N}{sp}}\int_{\mathbb{R}^N\setminus B_R(0)}\frac{1}{(|y|-r)^\frac{(N+sp)N}{sp}}\,dy \\
& \quad =\Vert \eta^p\Vert_p^{\frac{N}{s}}\int_{\mathbb{R}^N\setminus B_R(0)}\frac{1}{(|y|-r)^\frac{(N+sp)N}{sp}}\,dy \\ 
& \quad \leqslant C\displaystyle\int_{R}^{\infty}\int_{\partial B_t(0)}\frac{dS(y)}{(t-r)^{\frac{(N+sp)N}{sp}}}\,dt\\
& \quad \leq C\displaystyle\int_{R-r}^{\infty}\frac{\rho^{N-1}+r^{N-1}}{\rho^{\frac{(N+sp)N}{sp}}}\,d\rho .
\end{align*}
 
From this we deduce the existence of a positive constant $C=C(N,r)>0$ such that 
\begin{align*}
\displaystyle\int_{\mathbb{R}^N\setminus B_R(0)}|D^s\eta^p(y)|^{\frac{N}{sp}}\,dy 
& \leqslant 
C\left(\frac{1}{(R-r)^{\frac{N^2}{sp}}}+\frac{1}{(R-r)^{\frac{(N+sp)N}{sp}-1}}\right).
\end{align*}
Passing to the limit as $R\rightarrow+\infty$ we get 
\begin{align*}
\displaystyle\int_{\mathbb{R}^N\setminus B_R(0)}|D^s\eta^p(y)|^{\frac{N}{sp}}dy\rightarrow 0.
\end{align*} 
Moreover, for every positive real number 
$\varepsilon>0$ there exists $R(\varepsilon)>0$ big enough such that  
\begin{equation}
\displaystyle\int_{\mathbb{R}^N\setminus B_R(0)}|D^s\eta^p(y)|^{\frac{N}{sp}}\,dy < \frac{\varepsilon}{2}.
\label{estDs2}
\end{equation}
Substituting inequalities~\eqref{esti2} and~\eqref{estDs2} into equality~\eqref{estiDs}, for every 
$\varepsilon>0$ there exists $n_0\in \mathbb{N}$ and $R=R(\varepsilon)>0$ such that if $n>n_0$, then
\begin{align*}
\displaystyle\int_{\mathbb{R}^N}|u_n(y)|^p|D^s\eta^p(y)|\,dy 
& < \varepsilon,
\end{align*}
that is, 
\begin{equation}
\displaystyle\int_{\mathbb{R}^N}|u_n(y)|^p|D^s\eta^p(y)|dy \rightarrow 0 \quad \mbox{as } n\rightarrow +\infty.\label{convIparazero}
\end{equation}
Thus, by using inequality~\eqref{estiI} 
and the limit~\eqref{convIparazero} it follows that
\begin{align}
I & \coloneqq \displaystyle\int_{\mathbb{R}^N}\frac{J_p(u_n(x)-u_n(y))\left[\eta^p(x)-\eta^p(y)\right] u_n(y)}{|x-y|^{N+sp}}\,dx\,dy = o_n(1).
\label{I=o1}
\end{align}
Combining the estimates~\eqref{I=o1} 
and~\eqref{seqps}, we get
\begin{equation}
o_n(1)=\displaystyle\int_{\mathbb{R}^{N}}\int_{\mathbb{R}^{N}}\frac{\eta^p(x)|u_n(x)-u_n(y)|^p}{|x-y|^{N+sp}}\,dx\,dy.\label{Estruseqps}
\end{equation}

Our goal now is to show that
\begin{equation}
[\eta u_n]_{s,p}^p=\displaystyle\int_{\mathbb{R}^{N}}\int_{\mathbb{R}^{N}}
\frac{\eta^p(x)|u_n(x)-u_n(y)|^p}{|x-y|^{N+sp}}\,dx\,dy + o_n(1). \label{ordemseminorma}
\end{equation}
To accomplish this, we use the elementary inequality
\begin{align*}
||X+Y|^p-|X|^p|\leqslant C_p(|X|^{p-1}+|Y|^{p-1})|Y|\quad\text{for all } X, Y \in\mathbb{R}^N,
\end{align*}
valid for $p\geqslant 1$; here, $C_p>0$ is a constant that depends only on the exponent $p$. 
We use this inequality with the choices
\begin{alignat*}{2}
X & = \eta(x)(u_n(x)-u_n(y))
& \quad \text{and} \quad
Y & = u_n(y)(\eta(x)-\eta(y)).
\end{alignat*}
Thus,
\begin{align*}
&\displaystyle\int_{\mathbb{R}^{N}}\int_{\mathbb{R}^{N}}
\frac{\vert |\eta(x)u_n(x)-\eta(y)u_n(y)|^p
-|\eta(x)(u_n(x)-u_n(y))|^p\vert}
{\vert x-y\vert^{N+sp}}\,dx\,dy\\
& \quad\leqslant C_p 
\displaystyle\int_{\mathbb{R}^{N}}\int_{\mathbb{R}^{N}}
\frac{\vert \eta(x)(u_n(x)-u_n(y))\vert^{p-1}
\vert u_n(y)(\eta(x)-\eta(y))\vert}
{\vert x-y\vert^{N+sp}}\,dx\,dy\\
&\qquad + C_p 
\displaystyle \int_{\mathbb{R}^{N}}\int_{\mathbb{R}^{N}}
\frac{\vert u_n(y)(\eta(x)-\eta(y))\vert^p}
     {\vert x-y\vert^{N+sp}}\,dx\,dy.
\end{align*}
Using the Hölder inequality, the definition of $D^s\eta$ and the fact that $\eta\in C_0^{\infty}(\mathbb{R}^N)$, we obtain
\begin{align}
{} &
\displaystyle\int_{\mathbb{R}^{N}}\int_{\mathbb{R}^{N}}
\frac{\vert\vert\eta(x)u_n(x)
 -\eta(y)u_n(y)\vert^p
 -\vert\eta(x)(u_n(x)
 -u_n(y))\vert^p\vert}
 {\vert x-y\vert^{N+sp}}\,dx\,dy \nonumber \\
&\quad\leqslant C\displaystyle[u_n]_{s,p}^{p-1}\left(
\displaystyle\int_{\mathbb{R}^{N}}\int_{\mathbb{R}^{N}}
\frac{\vert u_n(y)(\eta(x)-\eta(y))\vert^p}
{\vert x-y\vert^{N+sp}}\,dx\,dy\right)^{1/p} \nonumber \\
& \qquad\quad+C_p 
\displaystyle\int_{\mathbb{R}^{N}}\int_{\mathbb{R}^{N}} 
  \frac{\vert u_n(y)(\eta(x)-\eta(y))\vert^p}
       {\vert x-y\vert^{N+sp}}\,dx\,dy.
\label{desigualdadeseminorma}
\end{align}
We remark that by the definition of $D^{s}\eta$,
\begin{align*}
\displaystyle\int_{\mathbb{R}^{N}}\int_{\mathbb{R}^{N}}
\frac{\vert u_n(y)(\eta(x)-\eta(y))\vert^p}{\vert x-y\vert^{N+sp}}\,dx\,dy =\int_{\mathbb{R}^{N}}\vert u_n(y)\vert^p \vert D^s\eta(y) \vert \,dy.
\end{align*}
Using the limit~\eqref{convIparazero} in the inequality~\eqref{desigualdadeseminorma}, we get
\begin{align*}
\displaystyle\int_{\mathbb{R}^{N}}\int_{\mathbb{R}^{N}}
\frac{\vert\vert\eta(x)u_n(x)-\eta(y)u_n(y)\vert^p
-\vert\eta(x)(u_n(x)-u_n(y))\vert^p\vert}
{\vert x-y\vert^{N+sp}}\,dx\,dy = o_n(1).
\end{align*}
This implies that the estimate~\eqref{ordemseminorma} is valid.
Following up, using inequalities~\eqref{eq4fp} and
the estimates~\eqref{I=o1} and~\eqref{ordemseminorma} 
in the estimate~\eqref{seqps}, we deduce that
\begin{align*}
o_n(1) 
& = \displaystyle\int_{\mathbb{R}^N}\int_{\mathbb{R}^N}
    \frac{\eta^p(x)|u_n(x)-u_n(y)|^p}{|x-y|^{N+sp}}\,dx\,dy\\
&  = [\eta u_n]_{s,p}^p + o_n(1)
  \geqslant \Vert \eta u_n\Vert^p + o_n(1).
\end{align*}
Thus, we get 
\begin{align*}
\Vert\eta u_n\Vert^p\rightarrow 0
\quad \mbox{as }
n\rightarrow+\infty.
\end{align*}
Since $\eta\equiv 1$ in $\omega\Subset\mathbb{R}^N\setminus\{0\}$, it follows that
\begin{align*}
\int_{\omega}\int_{\omega}\dfrac{\vert u_n(x)-u_n(y)\vert^p}{\vert x-y\vert^{N+sp}}\,dx\,dy
\rightarrow 0\quad \mbox{as } n\rightarrow+\infty.
\end{align*}
This establishes the limits~\eqref{11}. The lemma is proved.
\end{proof}

\begin{rem}
In the proof of Lemma~\ref{lema.ref.af.3.1} we showed that for every cut-off function 
$\eta\in C^{\infty}_0(\mathbb{R}^N)$ 
and for every sequence 
$\{u_n\}_n\subset D^{s,p}(\mathbb{R}^N)$ 
such that
$u_n\rightharpoonup 0$ weakly in $D^{s,p}(\mathbb{R}^N)$, 
it is valid 
\begin{align*}
[\eta u_n]_{s,p}^{p}&= \displaystyle\lim_{n\rightarrow +\infty}
\int_{\mathbb{R}^N}\int_{\mathbb{R}^N}
\dfrac{\eta^p(x) |u_n(x)-u_n(y)|^p}{|x-y|^{N+sp}}\,dx\,dy +o_n(1)
\end{align*}

Moreover, if we consider the cut-off function 
$\eta\in C^{\infty}_0(\mathbb{R}^N)$ 
as in the proof of Lemma~\ref{lema.ref.af.3.1} and the sequence
$\{u_n\}_n\subset D^{s,p}(\mathbb{R}^N)$ 
is a Palais-Smale sequence $(PS)_c$ at the level $c \in (0,c^*)$ as in Proposition~\ref{2}, then we get 
\begin{align*}
[\eta u_n]_{s,p}^{p}&= \displaystyle\lim_{n\rightarrow +\infty}\int_{\mathbb{R}^{N}}\int_{\mathbb{R}^{N}}\dfrac{\eta^p(x) |u_n(x)-u_n(y)|^p}{|x-y|^{N+sp}}dxdy +o_n(1).
\end{align*}
In this way, we deduce that 
\begin{align}
\displaystyle\lim_{n\rightarrow +\infty}\int_{\omega}\int_{\mathbb{R}^{N}}\dfrac{|u_n(x)-u_n(y)|^p}{|x-y|^{N+sp}}dxdy
& = 0,\label{w2convdaseminorma}\\
\displaystyle\lim_{n\rightarrow +\infty}\int_{\mathbb{R}^{N}}\int_{\omega}\dfrac{|u_n(x)-u_n(y)|^p}{|x-y|^{N+sp}}dxdy & = 0, \label{w1convdaseminorma}
\end{align}
for every subset $\omega \Subset \mathbb{R}^N\setminus\{0\}$.\label{obs1}
\end{rem}

Let $\delta \in \mathbb{R}_+$ be fixed; we define the following quantities 
\begin{align}
\gamma 
& \coloneqq \displaystyle{\varlimsup_{n\rightarrow +\infty}} \int_{B_\delta(0)} \int_{B_\delta(0)}
       \frac{\vert u_n(x)-u_n(y)\vert^{p}}
            {\vert x-y\vert^{N+sp}} dx dy 
     - \mu \int_{B_\delta(0)}
       \frac{\vert u_n\vert^{p}}
            {\vert x\vert^{\alpha}} dx \label{def.gamma} \\
\lambda 
& \coloneqq \displaystyle{\varlimsup_{n\rightarrow +\infty}}
  \int_{B_\delta(0)}
  \dfrac{\vert u_n\vert^{p_s^*(\alpha)}}
        {\vert x\vert^{\alpha}} \,dx, \label{def.lambda}\\
\xi
& \coloneqq \displaystyle{\varlimsup_{n\rightarrow +\infty}}
  \int_{B_\delta(0)}
  \frac{\vert u_n\vert^{p_s^*(\beta)}}
       {\vert x\vert^\beta} \,dx \label{def.xi}.
\end{align}

From Remark~\ref{obs1} and from Lemma~\ref{lema.ref.af.3.1}
we deduce that the above quantities are well defined and are independent of the particular choice of $\delta>0$.
Indeed, consider a cut-off function 
$\eta\in C_0^{\infty}(\mathbb{R}^N)$ 
such that 
$\eta|_{B_\delta(0)}\equiv 1$. 
It follows that there exist positive real numbers 
$R>r>0$ such that $\operatorname{supp}(\eta)\subset B_r(0)\subset B_R(0)$.
Note that
\begin{align}
[\eta u_n]^p_{s,p} 
& = \displaystyle\int_{\mathbb{R}^N}\int_{\mathbb{R}^N}
\frac{\vert \eta(x)u_n(x)-\eta(y)u_n(y)\vert^{p}}
     {\vert x-y\vert^{N+sp}}\,dx\,dy \nonumber \\ 
&= \displaystyle\int_{\mathbb{R}^N}
   \int_{\mathbb{R}^N\setminus B_R(0)}
   \frac{\vert \eta(x)u_n(x)-\eta(y)u_n(y)\vert^{p}}
        {\vert x-y\vert^{N+sp}}\,dx\,dy \nonumber \\ 
&\qquad+\displaystyle\int_{\mathbb{R}^N}\int_{B_R(0)}
   \frac{\vert\eta(x)u_n(x)-\eta(y)u_n(y)\vert^p}
        {\vert x-y\vert^{N+sp}}\,dx\,dy \nonumber \\ 
&= \displaystyle\int_{\mathbb{R}^N}
   \int_{\mathbb{R}^N\setminus B_R(0)}
   \frac{\vert\eta (y)u_n(y)\vert^p}
   {\vert x-y\vert^{N+sp}}\,dx\,dy \nonumber \\
&\qquad
+\displaystyle\int_{\mathbb{R}^N}\int_{B_R(0)}   
   \frac{\vert\eta(x)u_n(x)-\eta(y)u_n(y)\vert^p}
        {\vert x-y\vert^{N+sp}}\,dx\,dy.
\label{gamabemdef}
\end{align}

Now we estimate both integrals on the right-hand side of equality~\eqref{gamabemdef}.
Considering the first integral, we have
\begin{align*}
& \displaystyle\int_{\mathbb{R}^N}
  \int_{\mathbb{R}^N\setminus B_R(0)}
  \frac{\vert\eta (y)u_n(y)\vert^p}
       {\vert x-y\vert^{N+sp}}\,dx\,dy \\       
&\qquad  = \int_{\mathbb{R}^N}\vert\eta(y)u_n(y)\vert^p
   \left( \int_{\mathbb{R}^N\setminus B_R(0)}
   \frac{1}{\vert x-y\vert^{N+sp}}\,dx 
   \right) \,dy \\
&\qquad  = \int_{B_r(0)}\vert\eta(y)u_n(y)\vert^p
   \left( \int_{\mathbb{R}^N\setminus B_R(0)}
   \frac{1}{\vert x-y\vert^{N+sp}}\,dx 
   \right) \,dy\\
& \qquad  \leqslant \int_{B_r(0)}\vert\eta(y)u_n(y)\vert^p
   \left( \int_{\mathbb{R}^N\setminus B_R(0)}
   \frac{1}{\left(\vert x\vert-r\right)^{N+sp}}\,dx
   \right) \,dy\\
&\qquad =\left( \int_{B_r(0)}\vert\eta(y)u_n(y)\vert^p \,dy \right)   
  \left( \int_{\mathbb{R}^N\setminus B_R(0)}
         \frac{1}{\left(\vert x\vert-r\right)^{N+sp}}
         \,dx \right).
\end{align*}
Since $N \geqslant 2$ and $sp < N$ it follows that
$ N-1-(N+sp)=-1-sp<-1$ and $-(N+sp)<-1$; moreover, $R>r$.
Hence,
\begin{align*}
\int_{\mathbb{R}^N\setminus B_R(0)}\frac{1}{\left(\vert x\vert-r\right)^{N+sp}} \,dx & < \infty.
\end{align*}
Recall that 
$u_n\rightharpoonup 0$ weakly in
$D^{s,p}(\mathbb{R}^N)$ as $n \to +\infty$ 
and that the embedding 
$D^{s,p}(\mathbb{R}^N)\hookrightarrow L_{\operatorname{loc}}^{q}(\mathbb{R}^N)$ is compact for $q\in [1,p_s^*)$;
so, 
\begin{align*}
\int_{B_r(0)}\vert\eta(y)u_n(y)\vert^p \,dy
& \rightarrow 0 \quad \textup{as } n\rightarrow \infty.
\end{align*}
From these results we deduce that
\begin{align*}
\displaystyle\int_{\mathbb{R}^N}
   \int_{\mathbb{R}^N\setminus B_R(0)}
   \frac{\vert\eta (y)u_n(y)\vert^p}
        {\vert x-y\vert^{N+sp}} \,dx\,dy
& \rightarrow 0 \quad \textup{as } n\rightarrow \infty.
\end{align*}
Thus, from equality~\eqref{gamabemdef} we obtain
\begin{align}
[\eta u_n]^p_{s,p}
& = \displaystyle\int_{\mathbb{R}^N}\int_{B_R(0)}
    \frac{\vert\eta(x)u_n(x)-\eta(y)u_n(y)\vert^p}
         {\vert x-y\vert^{N+sp}}\,dx\,dy + o_n(1).\label{ordemnormaetaun}
\end{align}

Considering the second integral in~\eqref{gamabemdef}, we also have
\begin{align}
{} & \displaystyle\int_{\mathbb{R}^N}\int_{B_R(0)}
   \frac{\vert\eta(x)u_n(x)-\eta(y)u_n(y)\vert^p}
        {\vert x-y\vert^{N+sp}} \,dx\,dy \nonumber \\
&\quad = \displaystyle\int_{ \mathbb{R}^N\setminus B_R(0)}
   \int_{B_R(0)}
   \frac{\vert\eta(x)u_n(x)-\eta(y)u_n(y)\vert^p}
        {\vert x-y\vert^{N+sp}} \,dx\,dy \nonumber \\
& \qquad \quad +\displaystyle\int_{B_R(0)}\int_{B_R(0)}
   \frac{\vert\eta(x)u_n(x)-\eta(y)u_n(y)\vert^p}
        {\vert x-y\vert^{N+sp}} \,dx\,dy.
\label{estetaun}
\end{align}
Using the facts that
$\operatorname{supp}(\eta)\subset B_r(0) \subset B_R(0)$ 
and $R>r>0$, we deduce that
\begin{align*}
{} & \displaystyle\int_{\mathbb{R}^N\setminus B_R(0)}
   \int_{B_R(0)}
   \frac{\vert\eta(x)u_n(x)-\eta(y)u_n(y)\vert^p}
        {\vert x-y\vert^{N+sp}} \,dx\,dy \\
& \quad =\displaystyle\int_{\mathbb{R}^N\setminus B_R(0)}
   \int_{B_R(0)}
   \frac{\vert\eta(x)u_n(x)\vert^p}
        {\vert x-y\vert^{N+sp}} \,dx\,dy \\
& \quad =\int_{\mathbb{R}^N\setminus B_R(0)}\int_{B_r(0)}
   \frac{\vert\eta(x)u_n(x)\vert^p}
        {\vert x-y\vert^{N+sp}} \,dx\,dy \\
& \quad \leqslant \displaystyle
  \left( \int_{\mathbb{R}^N\setminus B_R(0)}
         \frac{1}{\left(\vert y\vert-r\right)^{N+sp}} \,dy
  \right)
  \left( \int_{B_r(0)}\vert\eta(x)u_n(x)\vert^p \,dx \right).
\end{align*} 
Under the conditions on the parameters $N \geqslant$ 
and $sp < N$, we know that
\begin{align*}
\displaystyle \int_{\mathbb{R}^N\setminus B_R(0)}
  \frac{1}{\left(\vert y\vert-r\right)^{N+sp}} \,dy
  & < +\infty.
\end{align*} 
Using again the facts that 
$u_n\rightharpoonup 0$ weakly in 
$D^{s,p}(\mathbb{R}^N)$ 
and that the embedding 
$D^{s,p}(\mathbb{R}^N)\hookrightarrow L^{q}_{loc}(\mathbb{R}^N)$ is compact for $q\in [1,p_s^*)$,
it follows that   
\begin{align}
\lim_{n\rightarrow +\infty}\int_{\mathbb{R}^N\setminus B_R(0)}
   \int_{B_R(0)}
   \frac{\vert\eta(x)u_n(x)-\eta(y)u_n(y)\vert^p}
   {\vert x-y\vert^{N+sp}} \,dx\,dy=0.\label{estordemetaun}
\end{align}

Using the estimates~\eqref{ordemnormaetaun},~\eqref{estetaun} and~\eqref{estordemetaun}, it follows that
\begin{align}
[\eta u_n]_{s,p}^p & =\displaystyle\int_{B_R(0)}\int_{B_R(0)}\frac{\vert\eta(x)u_n(x)-\eta(y)u_n(y)\vert^p}{\vert x-y\vert^{N+sp}}dxdy +o_n(1).
\label{estordemestaun2}
\end{align}

To proceed further, we must estimate the previous integral.
To do this, we consider positive real numbers $R>\delta>0$
and we write
\begin{align*}
B_R(0)\times B_R(0)
 = \left( \left( B_R(0)\setminus B_\delta(0)\right) \cup B_\delta(0)\right) \times \left(\left( B_R(0)\setminus B_\delta(0)\right)\cup B_\delta(0)\right).
\end{align*}

In the case of the domain of integration 
$\left( B_R(0)\setminus B_\delta(0)\right) \times B_R(0)$
we have 
\begin{align}
{} & \displaystyle\int_{B_R(0)\setminus B_\delta(0)}
   \int_{B_R(0)}
   \frac{\vert\eta(x)u_n(x)-\eta(y)u_n(y)\vert^p}
        {\vert x-y\vert^{N+sp}} \,dx\,dy \nonumber \\
& \quad \leqslant 2^p
   \displaystyle\int_{B_R(0)\setminus B_\delta(0)}
   \int_{B_R(0)}
   \dfrac{\vert\eta(x)(u_n(x)-u_n(y))\vert^p}
        {\vert x-y\vert^{N+sp}} \,dx\,dy \nonumber\\
&  \qquad      + 2^p \displaystyle\int_{B_R(0)\setminus B_\delta(0)}
     \int_{B_R(0)}
     \dfrac{\vert u_n(y)(\eta(x)-\eta(y))\vert^p}
          {\vert x-y\vert^{N+sp}} \,dx\,dy .
\label{ineq:BRdelta}
\end{align}

To estimate the first integral on the right hand side of inequality~\eqref{ineq:BRdelta}
we use limit~\eqref{w2convdaseminorma} 
with $ \omega = B_R(0)\setminus B_\delta(0)\Subset\mathbb{R}^N\setminus\{0\}$
to deduce that
\begin{align*}
\lim_{n\rightarrow +\infty}\int_{B_R(0)\setminus B_\delta(0)}
\int_{B_R(0)}
\frac{\vert\eta(x)(u_n(x)-u_n(y))\vert^p}
     {\vert x-y\vert^{N+sp}} \,dx\,dy =0.
\end{align*}
To estimate the second integral on the right hand side of inequality~\eqref{ineq:BRdelta}, 
we use the same subset 
$ \omega = B_R(0)\setminus B_\delta(0)\Subset\mathbb{R}^N\setminus\{0\}$
together with limit~\eqref{convIparazero} to obtain  
\begin{align*}
\lim_{n\rightarrow +\infty}\int_{B_R(0)\setminus B_\delta(0)}
\int_{B_R(0)}
\frac{\vert u_n(y)(\eta(x)-\eta(y))\vert^p}
     {\vert x-y\vert^{N+sp}} \,dx\,dy=0.
\end{align*}
Therefore, from the two previous limits and from inequality~\eqref{ineq:BRdelta} we obtain 
\begin{align}
\displaystyle\lim_{n\rightarrow +\infty}\int_{B_R(0)\setminus B_\delta(0)}
\int_{B_R(0)}
\frac{\vert\eta(x)u_n(x)-\eta(y)u_n(y)\vert^p}
     {\vert x-y\vert^{N+sp}} \,dx\,dy=0.
\label{limit:BRdelta1}
\end{align}
Similarly, in the case of the domain of integration 
$B_\delta(0)\times\left(B_R(0)\setminus B_\delta(0)\right)$  
we proceed as in the previous case to obtain. We write an inequality analogous to inequality~\eqref{ineq:BRdelta}; 
afterwards, we use inequality~\eqref{w1convdaseminorma} together with limit~\eqref{convIparazero} to obtain
\begin{align}
\lim_{n\rightarrow +\infty}\int_{B_\delta(0)}
\int_{B_R(0)\setminus B_\delta(0)}
\frac{\vert\eta(x)u_n(x)-\eta(y)u_n(y)\vert^p}
     {\vert x-y\vert^{N+sp}} \,dx\,dy=0.
\label{limit:BRdelta2}
\end{align}

Therefore, combining limits~\eqref{limit:BRdelta1} 
and~\eqref{limit:BRdelta2} with 
estimate~\eqref{estordemestaun2},
we deduce that
\begin{align}
[\eta u_n]_{s,p}^p
& = \displaystyle\int_{B_R(0)}
    \int_{B_R(0)}
    \frac{\vert\eta(x)u_n(x)-\eta(y)u_n(y)\vert^p}
         {\vert x-y\vert^{N+sp}} \,dx\,dy 
    + o_n(1) \nonumber \\
& = \displaystyle\int_{B_\delta(0)}
    \int_{B_\delta(0)}
    \frac{\vert\eta(x)u_n(x)-\eta(y)u_n(y)\vert^p}
         {\vert x-y\vert^{N+sp}} \,dx\,dy 
    + o_n(1).
\label{est.eta.unfinal}
\end{align}

Now we can state the following result.

\begin{lem}
\label{lem:desig.envolvendo.lambda}
Let $\{u_n\}_n\subset D^{s,p}(\mathbb{R}^N)$ be a 
Palais-Smale sequence $(PS)_c$ for the functional $\Phi$ at the level 
$c \in (0,c^*)$ and let $\lambda$, $\xi$, and $\gamma$ 
be defined in~\eqref{def.gamma},~\eqref{def.lambda}
and~\eqref{def.xi}. 
If $u_n\rightharpoonup 0$ weakly in $D^{s,p}(\mathbb{R}^N)$ 
as $n \rightarrow +\infty$ then 
\begin{alignat}{2}
\lambda^{\frac{p}{p_s^*}}
& \leqslant K(\mu,\alpha)\gamma
& \quad \text{and} \quad
\xi^{\frac{p}{p_s^*(\beta)}}
& \leqslant K(\mu,\beta)\gamma \label{desig.envolvendo.lambda}
\end{alignat}
\end{lem}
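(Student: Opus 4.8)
The plan is to apply the sharp Hardy--Sobolev inequality encoded in the definition~\eqref{6} of $K(\mu,\alpha)$ (and of $K(\mu,\beta)$) to the truncated functions $\eta u_n$, and then pass to the upper limit using the localization identity~\eqref{est.eta.unfinal} already established, together with the compactness of the embedding $D^{s,p}(\mathbb{R}^N)\hookrightarrow L^q_{\operatorname{loc}}(\mathbb{R}^N)$ for $q\in[1,p_s^*)$.

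First I would fix $\delta>0$ and take a cut-off function $\eta\in C_0^{\infty}(\mathbb{R}^N)$ with $0\leqslant\eta\leqslant 1$, $\eta\equiv 1$ on $B_\delta(0)$ and $\operatorname{supp}(\eta)\subset B_r(0)$ for some $r>\delta$, exactly as in the derivation of~\eqref{est.eta.unfinal}; then $\eta u_n\in D^{s,p}(\mathbb{R}^N)$ by~\cite[Lemma~A.1]{MR3732174}. Applying to $v=\eta u_n$ the inequality furnished by the definition~\eqref{6} of $K(\mu,\alpha)$ gives, for every $n$,
\begin{align*}
\left(\int_{\mathbb{R}^N}\frac{|\eta u_n|^{p_s^*(\alpha)}}{|x|^{\alpha}}\,dx\right)^{p/p_s^*(\alpha)}\leqslant K(\mu,\alpha)\,\Vert\eta u_n\Vert^p .
\end{align*}

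Next I would compute the upper limit of each side. On the left, I split $\int_{\mathbb{R}^N}=\int_{B_\delta(0)}+\int_{\operatorname{supp}(\eta)\setminus B_\delta(0)}$: the first integral equals $\int_{B_\delta(0)}|u_n|^{p_s^*(\alpha)}|x|^{-\alpha}\,dx$ because $\eta\equiv 1$ there, while $\operatorname{supp}(\eta)\setminus B_\delta(0)$ is a compact subset of $\mathbb{R}^N\setminus\{0\}$ on which $|x|^{-\alpha}$ is bounded and, since $\alpha\in(0,sp)$ forces $p<p_s^*(\alpha)<p_s^*$, the compact embedding gives $u_n\to 0$ in $L^{p_s^*(\alpha)}$; hence the second integral is $o_n(1)$ and $\varlimsup_n\int_{\mathbb{R}^N}|\eta u_n|^{p_s^*(\alpha)}|x|^{-\alpha}\,dx=\lambda$. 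On the right, $\Vert\eta u_n\Vert^p=[\eta u_n]_{s,p}^p-\mu\int_{\mathbb{R}^N}|\eta u_n|^p|x|^{-ps}\,dx$; by~\eqref{est.eta.unfinal} and the identity $\eta(x)u_n(x)-\eta(y)u_n(y)=u_n(x)-u_n(y)$ on $B_\delta(0)\times B_\delta(0)$ one has
\begin{align*}
[\eta u_n]_{s,p}^p=\int_{B_\delta(0)}\int_{B_\delta(0)}\frac{|u_n(x)-u_n(y)|^p}{|x-y|^{N+sp}}\,dx\,dy+o_n(1),
\end{align*}
and the same localization argument (now with $|x|^{-ps}$ bounded on $\operatorname{supp}(\eta)\setminus B_\delta(0)$ and $u_n\to 0$ in $L^p_{\operatorname{loc}}$) gives $\int_{\mathbb{R}^N}|\eta u_n|^p|x|^{-ps}\,dx=\int_{B_\delta(0)}|u_n|^p|x|^{-ps}\,dx+o_n(1)$. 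Hence $\Vert\eta u_n\Vert^p$ coincides, up to $o_n(1)$, with the quantity whose upper limit defines $\gamma$ in~\eqref{def.gamma}, so $\varlimsup_n\Vert\eta u_n\Vert^p=\gamma$.

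Finally, taking $\varlimsup_n$ in the displayed inequality and using that $t\mapsto t^{p/p_s^*(\alpha)}$ is continuous and increasing — so that it commutes with $\varlimsup_n$ while the $o_n(1)$ corrections drop out — yields $\lambda^{p/p_s^*(\alpha)}\leqslant K(\mu,\alpha)\gamma$. The bound $\xi^{p/p_s^*(\beta)}\leqslant K(\mu,\beta)\gamma$ is obtained verbatim, replacing $\alpha$ by $\beta$, applying the inequality attached to the definition of $K(\mu,\beta)$ and using that $\beta\in(0,sp)$ gives $p<p_s^*(\beta)<p_s^*$. The step I expect to be the \emph{main obstacle} is the passage to the limit on the right-hand side, i.e.\ showing that truncating $u_n$ by $\eta$ perturbs the Gagliardo energy $[\,\cdot\,]_{s,p}^p$ only by $o_n(1)$; this is precisely the content of~\eqref{est.eta.unfinal}, which itself rests on the weak convergence $u_n\rightharpoonup 0$ and the $L^{\infty}$ and $L^{N/sp}$ bounds on $D^s\eta$ established earlier, whereas the lower-order Hardy and critical terms are then controlled routinely by the compact embeddings, crucially because $\alpha,\beta>0$ keeps all exponents strictly below $p_s^*$.
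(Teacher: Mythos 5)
Your proposal is correct and follows essentially the same route as the paper: apply the Hardy--Sobolev inequality defining $K(\mu,\alpha)$ to the truncated function $\eta u_n$, then use the localization identity~\eqref{est.eta.unfinal} together with the compact embedding and weak convergence $u_n\rightharpoonup 0$ to identify $\varlimsup_n\|\eta u_n\|^p$ with $\gamma$, and pass to the upper limit. You supply slightly more detail than the paper (e.g.\ explicitly localizing the Hardy term and showing the left-hand side converges to $\lambda$ rather than merely being bounded below by it), but the underlying argument is the same.
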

\begin{proof}
Let $\eta\in C_0^{\infty}(\mathbb{R}^N)$ be a cut-off function such that $\eta|_{B_\delta(0)}\equiv 1$, 
with $\delta >0$. Using the definition~\eqref{6} of the constant $K(\mu,\alpha)$ we get
\begin{align*}
{} & \left(\int_{B_\delta(0)}
\dfrac{\vert u_n\vert^{p_s^*(\alpha)}}
      {\vert x\vert^{\alpha}} \,dx 
\right)^{\frac{p}{p_s^*(\alpha)}} \leqslant \left( \int_{\mathbb{R}^N}
  \dfrac{\vert\eta u_n\vert^{p_s^*(\alpha)}}
        {\vert x\vert^{\alpha}} \,dx
\right)^{\frac{p}{p_s^*(\alpha)}} \leqslant K(\mu,\alpha)\Vert\eta u_n\Vert^p \\
\end{align*} 
Using the estimate~\eqref{est.eta.unfinal} and that $u_n\rightharpoonup 0 $ in $D^{s,p}(\mathbb{R}^N)$, we concluded taking the limit as $n\rightarrow +\infty$ that
$ \lambda^{\frac{p}{p_s^*}} \leqslant K(\mu,\alpha)\gamma$.
The other inequality in~\eqref{desig.envolvendo.lambda} can be obtained in a similar way. This concludes the proof of the lemma.
\end{proof}

Now we state another lemma that will be useful in the proof of Proposition~\ref{2}.

\begin{lem}
\label{lem:desig:gammalambdaxi}
Let $\{u_n\}_n\subset D^{s,p}(\mathbb{R}^N)$ be a 
Palais-Smale sequence $(PS)_c$ for the functional $\Phi$ at the level 
$c \in (0,c^*)$ and let $\gamma$, $\lambda$, and $\xi$ be 
defined in~\eqref{def.gamma},~\eqref{def.lambda}, 
and~\eqref{def.xi}. 
If $u_n\rightharpoonup 0$ weakly in $D^{s,p}(\mathbb{R}^N)$ 
as $n\rightarrow+\infty$, 
then $\gamma\leqslant\lambda+\xi$. 
\end{lem}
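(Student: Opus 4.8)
The plan is to test the derivative $\Phi'(u_n)$ against the function $\eta^p u_n$, where $\eta\in C_0^\infty(\mathbb{R}^N)$ is a cut-off function with $0\le\eta\le 1$, $\eta\equiv 1$ on $B_\delta(0)$ and $\operatorname{supp}(\eta)\subset B_r(0)$ for some $r>\delta$, exactly as in the proof of Lemma~\ref{lema.ref.af.3.1}. A weakly convergent sequence is bounded, so $\{u_n\}_n$ is bounded in $D^{s,p}(\mathbb{R}^N)$; since moreover $\eta^p u_n\in D^{s,p}(\mathbb{R}^N)$ and $\Vert\eta^p u_n\Vert$ is controlled by $\Vert u_n\Vert$ (cf.\ the proof of Lemma~\ref{lema.ref.af.3.1}), one gets $\langle\Phi'(u_n),\eta^p u_n\rangle=o_n(1)$, as in~\eqref{12}. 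Expanding the duality pairing yields
\begin{align*}
o_n(1)
&=\int_{\mathbb{R}^N}\int_{\mathbb{R}^N}\frac{J_p(u_n(x)-u_n(y))\left(\eta^p(x)u_n(x)-\eta^p(y)u_n(y)\right)}{|x-y|^{N+sp}}\,dx\,dy\\
&\quad-\mu\int_{\mathbb{R}^N}\frac{|u_n|^p\eta^p}{|x|^{ps}}\,dx-\int_{\mathbb{R}^N}\frac{|u_n|^{p_s^*(\beta)}\eta^p}{|x|^{\beta}}\,dx-\int_{\mathbb{R}^N}\frac{|u_n|^{p_s^*(\alpha)}\eta^p}{|x|^{\alpha}}\,dx.
\end{align*}
The goal is to show that, modulo $o_n(1)$, the four terms on the right-hand side are precisely the quantities whose $\varlimsup$ defines $\gamma$, $\xi$ and $\lambda$, and then to pass to the upper limit.

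For the non-local term I would split $\eta^p(x)u_n(x)-\eta^p(y)u_n(y)=\eta^p(x)(u_n(x)-u_n(y))+u_n(y)(\eta^p(x)-\eta^p(y))$: the contribution of the second summand is the quantity $I$ already shown to be $o_n(1)$ in~\eqref{I=o1}, while the first summand produces $\int_{\mathbb{R}^N}\int_{\mathbb{R}^N}\eta^p(x)|u_n(x)-u_n(y)|^p|x-y|^{-N-sp}\,dx\,dy$, which by Remark~\ref{obs1} (see~\eqref{ordemseminorma}) and estimate~\eqref{est.eta.unfinal} equals $\int_{B_\delta(0)}\int_{B_\delta(0)}|\eta(x)u_n(x)-\eta(y)u_n(y)|^p|x-y|^{-N-sp}\,dx\,dy+o_n(1)$, and since $\eta\equiv 1$ on $B_\delta(0)$ this is $\int_{B_\delta(0)}\int_{B_\delta(0)}|u_n(x)-u_n(y)|^p|x-y|^{-N-sp}\,dx\,dy+o_n(1)$. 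For the Hardy term I would write $\int_{\mathbb{R}^N}|u_n|^p\eta^p|x|^{-ps}\,dx=\int_{B_\delta(0)}|u_n|^p|x|^{-ps}\,dx+\int_{\operatorname{supp}(\eta)\setminus B_\delta(0)}|u_n|^p\eta^p|x|^{-ps}\,dx$, and the last integral tends to $0$ because $\operatorname{supp}(\eta)\setminus B_\delta(0)$ is a compact subset of $\mathbb{R}^N\setminus\{0\}$, so that Lemma~\ref{lema.ref.af.3.1} (see~\eqref{10}) applies. Consequently the quantity obtained by subtracting $\mu$ times the Hardy term from the non-local term equals, up to $o_n(1)$, exactly the expression under the $\varlimsup$ in~\eqref{def.gamma}.

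For the two critical terms I would use the same decomposition over $B_\delta(0)$ and $\operatorname{supp}(\eta)\setminus B_\delta(0)$, together with~\eqref{10} and~\eqref{11} (equivalently, the compactness of $D^{s,p}(\mathbb{R}^N)\hookrightarrow L^q_{\operatorname{loc}}(\mathbb{R}^N)$ for $q<p_s^*$, which applies since $p_s^*(\alpha),p_s^*(\beta)<p_s^*$ and $|x|^{-\alpha},|x|^{-\beta}$ are bounded on compact subsets of $\mathbb{R}^N\setminus\{0\}$), to obtain $\int_{\mathbb{R}^N}|u_n|^{p_s^*(\alpha)}\eta^p|x|^{-\alpha}\,dx=\int_{B_\delta(0)}|u_n|^{p_s^*(\alpha)}|x|^{-\alpha}\,dx+o_n(1)$ and the analogue with $\beta$ in place of $\alpha$. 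Writing $\gamma_n$, $\xi_n$, $\lambda_n$ for the integrands appearing under the $\varlimsup$ in~\eqref{def.gamma},~\eqref{def.lambda},~\eqref{def.xi}, the identity of the first paragraph then reads $\gamma_n-\xi_n-\lambda_n=o_n(1)$, that is $\gamma_n=\xi_n+\lambda_n+o_n(1)$. Since $\varlimsup$ is subadditive and each of these quantities is bounded, $\gamma=\varlimsup_{n}\gamma_n\le\varlimsup_{n}\xi_n+\varlimsup_{n}\lambda_n=\xi+\lambda$, which is the assertion.

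I do not expect a serious obstacle in any single estimate: the delicate reduction of the $\eta^p$-weighted Gagliardo integral to the double integral over $B_\delta(0)\times B_\delta(0)$ modulo $o_n(1)$ has already been carried out in Remark~\ref{obs1} and in the chain of estimates ending in~\eqref{est.eta.unfinal}. The two points needing care are, first, that all four terms must be localized to $B_\delta(0)$, the annular remainders being discarded via the compact embedding away from the origin; and second, that $\varlimsup$ is only subadditive, so the argument produces the inequality $\gamma\le\xi+\lambda$ and not an equality — the reverse would require a weak lower semicontinuity type statement not available in the present setting. Finally, as already observed after~\eqref{def.gamma}--\eqref{def.xi}, the whole computation is independent of the choice of $\delta>0$.
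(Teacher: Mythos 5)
Your proof is correct and follows essentially the same route as the paper: pairing $\Phi'(u_n)$ against $\eta^p u_n$, invoking \eqref{I=o1} and \eqref{est.eta.unfinal} to reduce the nonlocal term to the double integral over $B_\delta(0)\times B_\delta(0)$ modulo $o_n(1)$, discarding the annular remainders of the Hardy and the two singular critical terms via Lemma~\ref{lema.ref.af.3.1} on $\omega=\operatorname{supp}(\eta)\setminus B_\delta(0)$, and finishing with subadditivity of the upper limit. The only difference is one of exposition: the paper compresses the localization of the three potential terms and the final $\varlimsup$ step into a single sentence (``by the $\limsup$ property and Lemma~\ref{lema.ref.af.3.1} with $\omega=\operatorname{supp}(\eta)\setminus B_\delta(0)$''), whereas you spell these steps out explicitly, which makes the argument easier to check but does not change its substance.
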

\begin{proof}
Let $\eta\in C^{\infty}_0(\mathbb{R}^N)$ 
be a cut-off function such that
$\eta|_{B_\delta(0)}\equiv 1$, with $\delta>0$. 
Using Brasco, Squassina and Yang~\cite[Lemma A.1]{MR3732174}, it follows that 
$\eta^p u_n\in D^{s,p}(\mathbb{R}^N)$;
hence, 
\begin{align*}
\lim_{n\rightarrow+\infty}\langle\Phi'(u_n),\eta^pu_n\rangle & = 0,
\end{align*}
that is,
\begin{align*}
o_n(1)
& = \displaystyle\int_{\mathbb{R}^{N}}\int_{\mathbb{R}^{N}}
    \frac{J_p(\eta u_n(x)-\eta u_n(y))
         \left( \eta^p(x)u_n(x)-\eta^p(y)u_n(y)\right)}
         {\vert x-y\vert^{N+sp}} \,dx\,dy \\
& \qquad - \mu \int_{\mathbb{R}^N}
  \frac{\vert \eta u_n\vert^p}
       {\vert x\vert^{ps}} \,dx  
  - \int_{\mathbb{R}^N}
    \dfrac{\vert u_n\vert^{p_s^*(\beta)}\eta^p}
          {\vert x\vert^{\beta}} \,dx
  - \int_{\mathbb{R}^N}
    \dfrac{\vert u_n\vert^{p_s^*(\alpha)}\eta^p}
          {\vert x\vert^{\alpha}} \,dx.
\end{align*}
We know that 
\begin{align*}
 & \displaystyle\int_{\mathbb{R}^{N}}\int_{\mathbb{R}^{N}}
    \frac{J_p(\eta u_n(x)-\eta u_n(y))(\eta^p(x)u_n(x)-\eta^p(y)u_n(y))}
         {\vert x-y\vert^{N+sp}} \,dx\,dy = \\
&\qquad\qquad = [\eta u_n]^p_{s,p} + o_n(1) = \displaystyle\int_{B_\delta(0)}\int_{B_\delta(0)}
    \frac{\vert u_n(x)-u_n(y)\vert^p}
         {\vert x-y\vert^{N+sp}} \,dx\,dy + o_n(1).
\end{align*} 
Therefore, by $\limsup$ property and Lemma~\ref{lema.ref.af.3.1} with $\omega =\mbox{supp}(\eta)\setminus B_\delta(0)\Subset\mathbb{R}^N\setminus\{0\}$ we have 
$\gamma\leqslant\lambda+\xi$.
This concludes the proof of the lemma.
\end{proof}

Finally, we can prove Proposition~\ref{2}, which states that every Palais-Smale 
$\{u_n\}_n\subset D^{s,p}(\mathbb{R}^N)$ for the functional $\Phi$ at the level $c\in (0,c_*)$ such that 
$ u_n \rightharpoonup 0$ weakly in $D^{s,p}(\mathbb{R}^N)$ 
as $n\rightarrow +\infty$ verifies one of the limits
$\lim_{n\rightarrow +\infty}
\int_{B_\delta(0)}
\frac{|u_n|^{p_s^*(\alpha)}}{\vert x\vert^{\alpha}}\,dx
= 0$
or 
$\lim_{n\rightarrow +\infty}
\int_{B_\delta(0)}
\frac{|u_n|^{p_s^*\alpha}}{\vert x\vert^{\alpha}}\,dx
 \geqslant \varepsilon_0$
with arbitrary $\delta > 0$
and a positive constant 
$\varepsilon_0 = \varepsilon_0(N,p,\mu,\alpha,s,c) $ 
independent of $\delta$.

\begin{proof}[Proof of Proposition~\ref{2}]
Let $\{u_n\}_n\ \subset D^{s,p}(\mathbb{R}^N)$ be a Palais-Smale sequence for the functional $\Phi$ at the level $c \in (0,c^*)$ as in Proposição~\ref{prop1} and consider
$\alpha \neq 0$. From Lemmas~\ref{lem:desig.envolvendo.lambda} and~\ref{lem:desig:gammalambdaxi}, we infer that
\begin{align*}
\lambda^{\frac{p}{p_s^*}}
& \leqslant K(\mu,\alpha)\gamma\leqslant K(\mu,\alpha)\lambda+K(\mu,\alpha)\xi.
\end{align*}
So,
\begin{equation}
\lambda^{\frac{p}{p_s^*(\alpha)}}\left(1-K(\mu,\alpha)\lambda^{1-\frac{p}{p_s^*(\alpha)}}\right)\leqslant K(\mu,\alpha)\xi.\label{des.lambda.beta}
\end{equation}
Since 
\begin{align*}
\Phi(u_n) - \left\langle \dfrac{1}{p_s^*(\alpha)}\Phi'(u_n),u_n\right\rangle
& = c + o_n(1),
\end{align*}
it follows from Lemma~\ref{lema.ref.af.3.1} that
\begin{align}
\lambda\leqslant \frac{cp(N-\alpha)}{sp-\alpha}\label{desg.lambda.c.N.s}.
\end{align}
Combining inequalities~\eqref{des.lambda.beta}
and~\eqref{desg.lambda.c.N.s}, we deduce that 
\begin{align*}
\left( 1
 -\left(\frac{cp(N-\alpha)}{sp-\alpha}
  \right)^{\frac{sp-\alpha}{p(N-\alpha)}}
  K(\mu,\alpha)\right) 
  \lambda^{\frac{p}{p_s^*(\alpha)}}
& \leqslant K(\mu,\alpha)\xi.
\end{align*}
By the definition~\eqref{c*} of $c^*$, 
it follows that 
$ 1-\left(\frac{cp(N-\alpha)}{(sp-\alpha)}
  \right)^{\frac{sp-\alpha}{p(N-\alpha)}}
  K(\mu,\alpha) > 0$. 
So, there exists a positive real number 
$\delta_1= \delta_1(N,p,\mu,s,\alpha,c)>0$
such that 
$\lambda^{\frac{p}{p_s^*}}\leqslant \delta_1\xi$.

Similarly, we deduce the existence of another positive real number $\delta_2 = \delta_2(N,p,\mu,s,\beta,c)>0$ such that 
$\xi^{\frac{p}{p_s^*(\alpha)}}\leqslant\delta_2\lambda$.

In this way, we infer that 
$\lambda = 0$ if, and only if, $\xi = 0$ and 
$\lambda > 0$ if, and only if, $\xi > 0$. 
Thus, by definition~\ref{def.lambda}, 
we can guarantee the existence of a real positive number 
$\varepsilon_0 = \varepsilon_0(N,p,\mu,\alpha,s,c)>0$ 
such that for all $\delta > 0$ one of the limits is valid,
\begin{alignat*}{2}
\displaystyle\lim_{n\rightarrow +\infty}
\int_{B_\delta(0)}
\frac{|u_n|^{p_s^*(\alpha)}}{\vert x\vert^{\alpha}}\,dx
& = 0
& \quad \text{or} \quad 
\displaystyle\lim_{n\rightarrow +\infty}
\int_{B_\delta(0)}
\frac{|u_n|^{p_s^*\alpha}}{\vert x\vert^{\alpha}}\,dx
& \geqslant \varepsilon_0.
\end{alignat*}
The proposition is proved.
\end{proof}

\section{Proof of Theorem~\ref{teo1}}
\label{sec:proofteo1}
The proof of Theorem~\ref{teo1} uses three auxiliary lemmas.
\begin{lem}
\label{lem:limsuppositive}
Let $\{u_n\}_n\subset D^{s,p}(\mathbb{R}^N)$ be a Palais-Smale sequence $(PS)_c$ for the functional $\Phi$ at the level $c \in (0,c^*)$ as in Proposition~\ref{2}. Therefore, 
\begin{align*}
\varlimsup_{n\rightarrow+\infty}
\int_{\mathbb{R}^N}
\frac{\vert u_n\vert^{p_s^*(\alpha)}}
     {\vert x\vert^{\alpha}} \,dx 
& > 0.
\end{align*}
\end{lem}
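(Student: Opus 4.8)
The plan is to argue by contradiction: suppose that
$\varlimsup_{n\rightarrow+\infty}\int_{\mathbb{R}^N}\vert u_n\vert^{p_s^*(\alpha)}/\vert x\vert^{\alpha}\,dx=0$.
Since the integrand is nonnegative, this forces
$\int_{\mathbb{R}^N}\vert u_n\vert^{p_s^*(\alpha)}/\vert x\vert^{\alpha}\,dx\to 0$ as $n\to+\infty$.
First I would record that $\{u_n\}_n$ is bounded in $D^{s,p}(\mathbb{R}^N)$:
setting $\theta\coloneqq\max\{p_s^*(\alpha),p_s^*(\beta)\}$, which satisfies $\theta>p$ since $\alpha,\beta\in(0,sp)$, and evaluating $\Phi(u_n)-\theta^{-1}\langle\Phi'(u_n),u_n\rangle$, one obtains
\begin{align*}
c+o_n(1)+o_n(\Vert u_n\Vert)
&=\left(\frac{1}{p}-\frac{1}{\theta}\right)\Vert u_n\Vert^{p}
+\left(\frac{1}{\theta}-\frac{1}{p_s^*(\beta)}\right)\int_{\mathbb{R}^N}\frac{\vert u_n\vert^{p_s^*(\beta)}}{\vert x\vert^{\beta}}\,dx\\
&\quad+\left(\frac{1}{\theta}-\frac{1}{p_s^*(\alpha)}\right)\int_{\mathbb{R}^N}\frac{\vert u_n\vert^{p_s^*(\alpha)}}{\vert x\vert^{\alpha}}\,dx;
\end{align*}
since the two coefficients in front of the integrals are nonnegative while $1/p-1/\theta>0$, this yields $\Vert u_n\Vert^{p}\leqslant C(1+\Vert u_n\Vert)$ and hence boundedness, so that in particular $\langle\Phi'(u_n),u_n\rangle=o_n(1)$.

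Next I would exploit the vanishing of the $\alpha$-integral in the two Palais-Smale relations $\langle\Phi'(u_n),u_n\rangle=o_n(1)$ and $\Phi(u_n)=c+o_n(1)$, which then reduce to
\begin{align*}
\Vert u_n\Vert^{p}-\int_{\mathbb{R}^N}\frac{\vert u_n\vert^{p_s^*(\beta)}}{\vert x\vert^{\beta}}\,dx=o_n(1),
\qquad
\frac{1}{p}\Vert u_n\Vert^{p}-\frac{1}{p_s^*(\beta)}\int_{\mathbb{R}^N}\frac{\vert u_n\vert^{p_s^*(\beta)}}{\vert x\vert^{\beta}}\,dx=c+o_n(1).
\end{align*}
Eliminating the $\beta$-integral between these two identities gives $(1/p-1/p_s^*(\beta))\Vert u_n\Vert^{p}=c+o_n(1)$, so that $\Vert u_n\Vert^{p}\to L\coloneqq c\,p\,p_s^*(\beta)/(p_s^*(\beta)-p)$, which is strictly positive because $c>0$ and $p_s^*(\beta)>p$; consequently $\int_{\mathbb{R}^N}\vert u_n\vert^{p_s^*(\beta)}/\vert x\vert^{\beta}\,dx\to L$ as well.

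Finally I would invoke the Hardy-Sobolev inequality built into the definition~\eqref{6} of $K(\mu,\beta)$, namely $\int_{\mathbb{R}^N}\vert u_n\vert^{p_s^*(\beta)}/\vert x\vert^{\beta}\,dx\leqslant K(\mu,\beta)^{p_s^*(\beta)/p}\Vert u_n\Vert^{p_s^*(\beta)}$; passing to the limit and dividing by $L>0$ yields $L\geqslant K(\mu,\beta)^{-p_s^*(\beta)/(p_s^*(\beta)-p)}$. Substituting this lower bound into the identity $c=(1/p-1/p_s^*(\beta))L$ we obtain
\begin{align*}
c\geqslant\left(\frac{p_s^*(\beta)-p}{p\,p_s^*(\beta)}\right)K(\mu,\beta)^{-\frac{p_s^*(\beta)}{p_s^*(\beta)-p}}\geqslant c_*,
\end{align*}
where the last inequality is precisely the definition~\eqref{c*} of $c_*$; this contradicts the hypothesis $c\in(0,c_*)$ and proves the lemma. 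The step I expect to demand the most care is the bookkeeping around which of $p_s^*(\alpha)$ and $p_s^*(\beta)$ is the larger exponent (needed both to extract boundedness via $\theta$ and to recognize that the resulting lower bound for $c$ is exactly one of the two terms defining $c_*$) together with the verification that $L>0$, which is what allows the Hardy-Sobolev estimate to be turned into a lower bound for $L$; the remaining manipulations of the Palais-Smale identities are routine.
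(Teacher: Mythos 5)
Your overall line of attack is the same as the paper's: contradict by assuming the $\alpha$-integral vanishes, plug that into the two Palais--Smale identities, deduce that $\int_{\mathbb{R}^N}\vert u_n\vert^{p_s^*(\beta)}/\vert x\vert^{\beta}\,dx\to L>0$, and then show that the Hardy--Sobolev bound built into $K(\mu,\beta)$ forces $c\geqslant c_*$, a contradiction. The paper carries this out via $\Phi(u_n)-p^{-1}\langle\Phi'(u_n),u_n\rangle$ rather than your elimination of the $\beta$-integral, and it derives the contradiction by showing the $\beta$-norm must tend to $0$ rather than by bounding $L$ from below; these are algebraically equivalent and the substance is identical. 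The paper does not explicitly re-establish boundedness inside the lemma, which is the one piece you added.

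That added piece contains a sign error you should fix. With $\theta\coloneqq\max\{p_s^*(\alpha),p_s^*(\beta)\}$, the coefficients $\tfrac{1}{\theta}-\tfrac{1}{p_s^*(\beta)}$ and $\tfrac{1}{\theta}-\tfrac{1}{p_s^*(\alpha)}$ are \emph{nonpositive}, not nonnegative (one is $0$, the other strictly negative since $\alpha\neq\beta$), so you cannot drop the integral terms to conclude $\Vert u_n\Vert^p\leqslant C(1+\Vert u_n\Vert)$. The fix is to take $\theta\coloneqq\min\{p_s^*(\alpha),p_s^*(\beta)\}$; then $\theta>p$ still holds and both coefficients are nonnegative, so the argument closes. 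The rest of the proof is correct and matches the paper.
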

\begin{proof}
We argue by contradiction. Suppose that
\begin{equation}
\lim_{n\rightarrow+\infty}\int_{\mathbb{R}^N}\frac{\vert u_n\vert^{p_s^*(\alpha)}}{\vert x\vert^{\alpha}}dx=0.\label{cond.contrad.}
\end{equation}
Using this hypothesis and the fact that
$\langle\Phi'(u_n),u_n\rangle =o_n(1)$, it follows that
\begin{align*}
\Vert u_n\Vert^{p}
& = \Vert u_n\Vert^{p_s^*(\beta)}_{L^{p_s^*(\beta)}(\mathbb{R}^N;\vert x\vert^{-\beta})}+o_n(1).
\end{align*}
From this estimate and by the definition~\eqref{6} of the constant 
$K(\mu,\beta)$, it follows that
\begin{align*}
\Vert u_n\Vert^{p}_{L^{p_s^*(\beta)}(\mathbb{R}^N;\vert x\vert^{-\beta})}
& \leqslant K(\mu,\beta)\Vert u_n\Vert^p \\
& = K(\mu,\beta)\left(\Vert u_n\Vert^{p_s^*(\beta)}_{L^{p_s^*(\beta)}(\mathbb{R}^N;\vert x\vert^{-\beta})}+o_n(1)\right),
\end{align*}
that is,
\begin{align*}
\Vert u_n\Vert^{p}_{L^{p_s^*(\beta)}(\mathbb{R}^N;\vert x\vert^{-\beta})} 
\left( 1-K(\mu,\beta)\Vert u_n\Vert^{p_s^*(\beta)-p}_{L^{p_s^*(\beta)}(\mathbb{R}^N;\vert x\vert^{-\beta})}\right)
& \leqslant o_n(1).
\end{align*}
We already know that 
\begin{align*}
\Phi(u_n)-\dfrac{1}{p}\langle\Phi'(u_n),u_n\rangle
& = c + o_n(1);
\end{align*} 
thus, using the hypothesis~\eqref{cond.contrad.} again, 
we get
\begin{align}
\Vert u_n \Vert_{L^{p_{\beta}^{*}}}^{p_{\beta}^{*}} 
& = \int_{\mathbb{R}^N}
\frac{\vert u_n\vert^{p_s^*(\beta)}}
     {\vert x\vert^{\beta}} \,dx
 = \frac{cp(N-\beta)}{(ps-\beta)} + o_n(1) \nrightarrow 0
\quad \text{as } n\rightarrow+\infty.
\label{eq:estimatenormbeta}
\end{align}
These estimates mean that
\begin{align*}
\Vert u_n\Vert^{p}_{L^{p_s^*(\beta)}(\mathbb{R}^N;\vert x\vert^{-\beta})}
\left( 1-K(\mu,\beta)\left(\frac{cp(N-\beta)}{(ps-\beta)}\right)^{\frac{p_s^*(\beta)-p}{p_s^*(\beta)}}
\right)
& \leqslant o_n(1).
\end{align*}
By the definition~\eqref{c*} of the constant $c^*$ and the hypothesis $c \in (0,c^*)$, it follows that
\begin{align*}
\lim_{n\rightarrow+\infty}\Vert u_n\Vert^p_{L^{p_s^*(\beta)}(\mathbb{R}^N,\vert x\vert^{-\beta})}
& = 0.
\end{align*}
But this is a contradiction with inequality~\eqref{eq:estimatenormbeta}.
The lemma is proved.
\end{proof}

\begin{lem}
Let $\{u_n\}_n \subset D^{s,p}(\mathbb{R}^{N})$ be a Palais-Smale sequence $(PS)_c$ for the functional $\Phi$ at the level $c \in (0,c^*)$. Then there exists a positive real number 
$\varepsilon_1 \in (0,\frac{\varepsilon_0}{2}]$, 
with $\varepsilon_0$ given in the limit~\eqref{def.epsilon0}, 
such that for every positive real number 
$\varepsilon\in (0,\varepsilon_1)$, 
there exists a sequence of positive real numbers
$\{r_n\}_n \subset\mathbb{R}_+$ with the property that the sequence
$\{\tilde{u}_n\}_n\subset D^{s,p}(\mathbb{R}^N)$, 
defined by the conformal transformation
\begin{align}
\tilde{u}_n(x) \coloneqq r_n^{\frac{N-sp}{p}}u_n(r_nx)
\quad \text{for all } x\in\mathbb{R}^N,\label{def.u.tilde}
\end{align}
is another $(PS)_c$ sequence which verifies
\begin{align}
\int_{B_1(0)}\frac{\vert \tilde{u}_n\vert^{p_s^*(\alpha)}}{\vert x\vert^{\alpha}} \,dx
& = \varepsilon \quad \text{for all } n\in\mathbb{N}.\label{norma.seq.u.tilde=epsilon}
\end{align}
\end{lem}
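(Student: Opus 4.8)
The plan is to exploit the scaling (conformal) invariance of the energy functional under the transformation $u\mapsto u_r$, $u_r(x):=r^{(N-sp)/p}u(rx)$, and then to normalize the mass of $\{u_n\}$ near the origin by an intermediate value argument applied to a concentration function.

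First I would record the scaling identities obtained by the changes of variables $y=rx$ (and $\xi=rx$, $\zeta=ry$ in the double integrals): for every $r>0$ and $u\in D^{s,p}(\mathbb{R}^N)$ one has $[u_r]_{s,p}=[u]_{s,p}$, $\int_{\mathbb{R}^N}|u_r|^p/|x|^{ps}\,dx=\int_{\mathbb{R}^N}|u|^p/|x|^{ps}\,dx$, and, since $(N-sp)p_s^*(\gamma)/p=N-\gamma$ for $\gamma\in\{\alpha,\beta\}$,
\begin{align*}
\int_{B_\rho(0)}\frac{|u_r(x)|^{p_s^*(\gamma)}}{|x|^{\gamma}}\,dx
=\int_{B_{r\rho}(0)}\frac{|u(y)|^{p_s^*(\gamma)}}{|y|^{\gamma}}\,dy
\qquad(0<\rho\leqslant+\infty).
\end{align*}
In particular $\Phi(u_r)=\Phi(u)$. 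The same change of variables in the (nonlinear) duality pairing yields $\langle\Phi'(u_r),\varphi\rangle=\langle\Phi'(u),\varphi_{1/r}\rangle$ for every $\varphi\in D^{s,p}(\mathbb{R}^N)$; since $[\varphi_{1/r}]_{s,p}=[\varphi]_{s,p}$ and $\varphi\mapsto\varphi_{1/r}$ is a bijection of $D^{s,p}(\mathbb{R}^N)$, taking the supremum over $\varphi$ gives $\Vert\Phi'(u_r)\Vert_{(D^{s,p}(\mathbb{R}^N))'}=\Vert\Phi'(u)\Vert_{(D^{s,p}(\mathbb{R}^N))'}$. Hence, whatever the choice of $r_n>0$, the rescaled functions $\tilde u_n=(u_n)_{r_n}$ lie in $D^{s,p}(\mathbb{R}^N)$ and form again a $(PS)_c$ sequence for $\Phi$.

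Next I would introduce the concentration function
\begin{align*}
Q_n(r):=\int_{B_r(0)}\frac{|u_n|^{p_s^*(\alpha)}}{|x|^{\alpha}}\,dx\qquad(r>0),
\end{align*}
which is continuous and nondecreasing with $Q_n(0^+)=0$ and $Q_n(+\infty)=M_n:=\int_{\mathbb{R}^N}|u_n|^{p_s^*(\alpha)}/|x|^{\alpha}\,dx$. Running the contradiction argument of Lemma~\ref{lem:limsuppositive} along an arbitrary subsequence shows in fact that $\varliminf_{n\to+\infty}M_n>0$, so after discarding finitely many indices there is $m_0>0$ with $M_n\geqslant m_0$ for all $n$. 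I then set $\varepsilon_1:=\min\{m_0,\varepsilon_0/2\}\in(0,\varepsilon_0/2]$. For $\varepsilon\in(0,\varepsilon_1)$ one has $0=Q_n(0^+)<\varepsilon<m_0\leqslant M_n=Q_n(+\infty)$, so by continuity there is $r_n>0$ (for instance $r_n:=\inf\{r>0:Q_n(r)\geqslant\varepsilon\}$) with $Q_n(r_n)=\varepsilon$. Applying the scaling identity above with $\gamma=\alpha$, $\rho=1$, $r=r_n$ gives
\begin{align*}
\int_{B_1(0)}\frac{|\tilde u_n|^{p_s^*(\alpha)}}{|x|^{\alpha}}\,dx
=\int_{B_{r_n}(0)}\frac{|u_n|^{p_s^*(\alpha)}}{|x|^{\alpha}}\,dx
=Q_n(r_n)=\varepsilon
\end{align*}
for every $n$, which is exactly~\eqref{norma.seq.u.tilde=epsilon}; combined with the previous paragraph, $\{\tilde u_n\}_n$ is the desired $(PS)_c$ sequence.

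The main obstacle is the uniform lower bound $M_n\geqslant m_0>0$: Lemma~\ref{lem:limsuppositive} only asserts $\varlimsup_n M_n>0$, whereas solving $Q_n(r_n)=\varepsilon$ simultaneously for all $n$ genuinely requires $M_n$ to be bounded away from zero. One must therefore verify that the argument of Lemma~\ref{lem:limsuppositive} (resting only on $\Phi(u_n)\to c>0$, $\langle\Phi'(u_n),u_n\rangle\to 0$, the identity $\Phi(u_n)-\tfrac{1}{p}\langle\Phi'(u_n),u_n\rangle=c+o_n(1)$ and the definition of $c^*$) is stable under passing to an arbitrary subsequence. The remaining ingredients — the change-of-variables computations establishing scaling invariance, especially for the pairing $\langle\Phi'(u_r),\cdot\rangle$, and the continuity and monotonicity of $Q_n$ — are routine.
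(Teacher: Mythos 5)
Your proposal is correct and follows the same overall strategy as the paper: record the conformal-scaling invariances, introduce the Levy concentration function $Q_n$, and pick $r_n$ by the intermediate value theorem so that $Q_n(r_n)=\varepsilon$, then push the normalization into $B_1(0)$ by the change of variables $x=r_n y$. The one place where you deviate is the handling of the uniform lower bound on $M_n:=\int_{\mathbb{R}^N}|u_n|^{p_s^*(\alpha)}/|x|^{\alpha}\,dx$. The paper simply invokes Lemma~\ref{lem:limsuppositive} (which gives $\varlimsup_n M_n>0$), sets $\varepsilon_1=\min\{\varepsilon_0/2,\sigma\}$ with $\sigma$ that limsup, and then passes to a subsequence on which $M_n>\varepsilon$ before applying the intermediate value theorem. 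You instead observe that the contradiction argument of Lemma~\ref{lem:limsuppositive} is stable along subsequences (every ingredient used there — $\Phi(u_n)\to c$, $\Phi'(u_n)\to 0$ in $(D^{s,p})'$, boundedness of $\{u_n\}$, the definition of $c^*$ — survives passage to a subsequence), so in fact $\varliminf_n M_n>0$, and you then take $\varepsilon_1=\min\{m_0,\varepsilon_0/2\}$ with $m_0<\varliminf_n M_n$. That observation is correct and lets you avoid extracting a subsequence; it is a genuine tightening, though a modest one, since all downstream arguments in the paper would also be happy with a subsequence. You also make explicit a point the paper leaves implicit: that $\Vert\Phi'(u_r)\Vert_{(D^{s,p})'}=\Vert\Phi'(u)\Vert_{(D^{s,p})'}$, via the adjoint identity $\langle\Phi'(u_r),\varphi\rangle=\langle\Phi'(u),\varphi_{1/r}\rangle$ and the isometry $\varphi\mapsto\varphi_{1/r}$ of $D^{s,p}(\mathbb{R}^N)$ — the paper only checks the invariance of the individual integrals and asserts the $(PS)_c$ property. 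Both routes lead to the same conclusion; yours buys a cleaner statement (no subsequence) at the cost of the extra (but easy) verification that Lemma~\ref{lem:limsuppositive} is subsequence-stable.
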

\begin{proof}
We begin by setting
\begin{align*}
\sigma & \coloneqq \displaystyle
\varlimsup_{n\rightarrow+\infty}
\int_{\mathbb{R}^N}
\frac{\vert u_n\vert^{p_s^*(\alpha)}}
     {\vert x \vert^{\alpha}} \,dx.
\end{align*}
From Lemma~\ref{lem:limsuppositive} it follows that
$\sigma>0$. 
Let 
$\varepsilon_1 \coloneqq \min\{\frac{\varepsilon_0}{2},\sigma\}$, with $\varepsilon_0>0$ given in the limit~\eqref{def.epsilon0}
and let us fix $\varepsilon\in (0,\varepsilon_1)$.
Passing to a subsequence if necessary, still denoted in the same way,
for every natural number $n\in\mathbb{N}$ 
there exists a positive real number $r_n>0$ such that 
\begin{align*}
\int_{B_{r_n}(0)}
\frac{\vert u_n\vert^{p_s^*(\alpha)}}
     {\vert x\vert^{\alpha}} \,dx
& = \varepsilon.
\end{align*}
The change of variables
$x = r_n y$ yields
\begin{align*}
 \varepsilon=\int_{B_{r_n}(0)}\frac{\vert u_n(x)\vert^{p_s^*(\alpha)}}{\vert x\vert^{\alpha}} \,dx
& = \int_{B_1(0)}
    \frac{\vert u_n(r_ny)\vert^{p_s^*(\alpha)}r_n^N}
         {\vert y\vert^{\alpha}r_n^{\alpha}} \,dy\\
& = \int_{B_1(0)}
    \frac{\vert r_n^{\frac{N-\alpha}{p_s^*(\alpha)}}
     u_n(r_ny)\vert^{p_s^*(\alpha)}}
     {\vert y\vert^{\alpha}} \,dy \\
& = \int_{B_1(0)}
 \frac{\vert \tilde{u}_n\vert^{p_s^*}}{\vert y \vert^{\alpha}} \,dy,
\end{align*} 
A similar change of variables also yields 
\begin{align*}
[\tilde{u}_n]_{s,p}^p
& = \int_{\mathbb{R}^{N}}\int_{\mathbb{R}^{N}}
    \frac{\vert r_n^{\frac{N-ps}{p}}u_n(r_nx)
         -r_n^{\frac{N-ps}{p}}u_n(r_ny)\vert^{p}}
         {|x-y|^{N+sp}} \,dx\,dy \\
& = r_n^{N-ps}\int_{\mathbb{R}^{N}}\int_{\mathbb{R}^{N}}
    \frac{\vert u_n(z)-u_n(t)\vert^p}
         {r_n^{-(N+sp)}|z-t|^{N+sp}} r_n^{-2N} \,dz\,dt \\
& = [u_n]_{s,p}^p.
\end{align*}
Moreover,
\begin{align*}
\int_{\mathbb{R}^N}
\frac{|\tilde{u}_n|^p}{|x|^{ps}} \,dx
& = r_n^{N-ps} \int_{\mathbb{R}^N}
    \frac{|u_n(y)|^p}{r_n^{-ps}|y|^{ps}}r_n^{-N} \,dy
  = \int_{\mathbb{R}^N}
    \frac{|u_n|^p}{|y|^{ps}} \,dy,\\
\shortintertext{and also}
\int_{\mathbb{R}^N}
\frac{|\tilde{u}_n|^{p_s^*(\beta)}}{|x|^{\beta}} \,dx
& = r_n^{N-\beta}\int_{\mathbb{R}^N}
    \frac{|u_n(y)|^{p_s^*(\beta)}}
         {r_n^{-\beta}|y|^{\beta}} r_n^{-N} \,dy
  = \int_{\mathbb{R}^N}
    \frac{|u_n|^{p_s^*(\beta)}}
         {|y|^{\beta}} \,dy.
\end{align*}
Therefore, the sequence 
$\{\tilde{u}_n\}_n\subset D^{s,p}(\mathbb{R}^N)$ is also a 
$(PS)_c$ sequence for the functional $\Phi$ and verifies equality~\eqref{norma.seq.u.tilde=epsilon}.
The lemma is proved. 
\end{proof}

Finally, we can present the proof of our theorem.
\begin{proof}[Proof of Theorem~\ref{teo1}]
Let $\{u_n\}_n \subset D^{s,p}(\mathbb{R}^{N})$ be a Palais-Smale sequence $(PS)_c$ for the functional $\Phi$ at the level $c \in (0,c^*)$.
We claim that this sequence is bounded in the function space $D^{s,p}(\mathbb{R}^{N})$;
after a passage to a subsequence if necessary, still denoted in the same way,
$u_n \rightharpoonup \tilde{u}$ weakly in $D^{s,p}(\mathbb{R}^{N})$ as $n \rightarrow +\infty$ and 
$\tilde{u}$ is a weak solution to problem~\eqref{1.1}.

Indeed, first we have to show that the sequence 
$\{\tilde{u}_n\}_n \subset D^{s,p}(\mathbb{R}^{N})$ is bounded in the function space $D^{s,p}(\mathbb{R}^N)$. 
To accomplish this, we use the pair of 
inequalities~\eqref{eq4fp} and the fact that
$\{\tilde{u}_n\}_n$ is a $(PS)_c$ sequence.
It follows that there exist positive real numbers
$C_1$ and $C_2$ such that
\begin{align*}
\Phi(\tilde{u}_n)
-\frac{1}{p_s^*(\alpha)}
\langle \Phi'(\tilde{u}_n),\tilde{u}_n\rangle
& 
= \left( \frac{1}{p}-\frac{1}{p_s^*(\alpha)} \right)
\Vert \tilde{u}_n\Vert^p \\
& \qquad + \left( \frac{1}{p_s^*(\alpha)}
- \frac{1}{p_s^*(\beta)} \right)
\int_{\mathbb{R}^N}
\frac{|\tilde{u}_n|^{p_s^*(\beta)}}
     {\vert x \vert^{\beta}} \,dx \\
& \leqslant C_1 + C_2 \Vert\tilde{u}_n\Vert +o_n(1);
\end{align*}
thus,
\begin{align*}
\left(\frac{1}{p}-\frac{1}{p_s^*(\alpha)}\right)
\Vert \tilde{u}_n \Vert^p
& \leqslant C_1+C_2\Vert\tilde{u}_n\Vert.
\end{align*}

This inequality assures us that 
$\{\Vert\tilde{u}_n\Vert\}_n$ is bounded. 
Using once more the pair of inequalities~\eqref{eq4fp},
we deduce that the sequence 
$\{\tilde{u}_n\}_n \subset D^{s,p}(\mathbb{R}^N)$ is bounded.
So, after a passage to a subsequence if necessary, still denoted in the same way, 
there exists $\tilde{u}\in D^{s,p}(\mathbb{R}^N)$ 
such that $\tilde{u}_n \rightharpoonup \tilde{u}$ 
weakly in $D^{s,p}(\mathbb{R}^N)$ as $n \to +\infty$. 

Notice that if $\tilde{u}\equiv 0$, then Proposition~\ref{2} guarantees that 
\begin{alignat*}{2}
\varlimsup_{n\rightarrow+\infty}
\int_{B_1(0)}
\frac{|\tilde{u}_n|^{p_s^*(\alpha)}}
     {\vert x \vert^{\alpha}} \,dx
& = 0
& \quad \text{or} \quad 
\varlimsup_{n\rightarrow+\infty}
\int_{B_1(0)}
\frac{|\tilde{u}_n|^{p_s^*(\alpha)}}
     {\vert x \vert^{\alpha}} \,dx
& \geqslant \varepsilon_0.
\end{alignat*} 
Since $0 < \varepsilon < \frac{\varepsilon_0}{2}$, 
we get a contradiction in both cases in view of equality~\eqref{norma.seq.u.tilde=epsilon}.
We deduce that $\tilde{u} \not \equiv 0$. 

Notice that from the weak convergence
$\tilde{u}_n\rightharpoonup \tilde{u}$ in 
$D^{s,p}(\mathbb{R}^N)$ we get
\begin{align*}
\tilde{u}_n & \rightarrow \tilde{u} \quad
\text{a.~e. } \mathbb{R}^N.
\end{align*}
We also have, 
from inequalities~(\ref{desg.hardy}) and~\eqref{eq4fp} 
and from the definition~\eqref{6}, that the sequence
$\{\tilde{u}_n\}_n \subset D^{s,p}(\mathbb{R}^N)$ is bounded 
in $L^{p_s^*(\beta)}(\mathbb{R}^N,\vert x\vert^{-\beta})$ 
and in $L^{p_s^*(\alpha)}(\mathbb{R}^N;|x|^{-\alpha})$. 
Thus, using a result in Kavian~\cite[Lemme 4.8]{MR1276944},
we deduce that
\begin{align*}
\tilde{u}_n & \rightharpoonup \tilde{u}
\quad \text{weakly in } 
L^{p_s^*(\beta)}(\mathbb{R}^N,\vert x\vert^{-\beta})
\text{ as } n\rightarrow+\infty, \\
\tilde{u}_n & \rightharpoonup \tilde{u}
\quad \text{weakly in } 
L^{p_s^*(\alpha)}(\mathbb{R}^N,|x|^{-\alpha})
\text{ as } n\rightarrow+\infty.
\end{align*}
From these convergences, for an arbitrary test function 
$\varphi \in D^{s,p}(\mathbb{R}^N)$ we have
\begin{align*}
\langle\Phi'(\tilde{u}_n),\varphi\rangle 
& = \displaystyle\int_{\mathbb{R}^{N}}\int_{\mathbb{R}^{N}}
    \frac{J_p(\tilde{u}_n(x)-\tilde{u}_n(y))
         (\varphi(x)-\varphi(y))}
         {\vert x-y\vert^{N+sp}} \,dx\,dy \\
& \,  - \mu \int_{\mathbb{R}^N}
    \frac{J_p(\tilde{u}_n)\varphi}
         {\vert x\vert^{ps}} dx -\int_{\mathbb{R}^N}
    \frac{J_{p_s^*(\beta)}(\tilde{u}_n)\varphi}
         {\vert x\vert^\beta}dx
    - \int_{\mathbb{R}^N}
      \frac{ J_{p_s^*(\alpha)}(\tilde{u}_n)\varphi}
           {\vert x\vert^{\alpha}}dx \\
& = o_n(1) + \displaystyle\int_{\mathbb{R}^{N}}\int_{\mathbb{R}^{N}}
    \frac{J_p(\tilde{u}(x)-\tilde{u}(y))
         (\varphi(x)-\varphi(y))}
         {\vert x-y\vert^{N+sp}} \,dx\,dy \\
& \quad   - \mu \int_{\mathbb{R}^N}
      \frac{J_p(\tilde{u})\varphi}
           {\vert x\vert^{ps}} \,dx 
    -\int_{\mathbb{R}^N}
    \frac{J_{p_s^*(\beta)}(\tilde{u})\varphi}
         {\vert x\vert^{\beta}} \,dx
    - \int_{\mathbb{R}^N}
      \frac{J_{p_s^*(\alpha)}(\tilde{u})\varphi}
           {\vert x\vert^{\alpha}} \,dx   \\
& = \langle\Phi'(\tilde{u}),\varphi\rangle + o_n(1).
\end{align*}
Since $\{\tilde{u}_n\}_n \subset
D^{s,p}(\mathbb{R}^N) $
is a $(PS)_c$ sequence, it follows that
$\langle\Phi'(\tilde{u}_n),\varphi\rangle\rightarrow 0\;\mbox{quando}\;n\rightarrow+\infty.
$; and since the test function 
$\varphi \in D^{s,p}(\mathbb{R}^N)$ is arbitrary, it follows that
\begin{align*}
\langle \Phi'(\tilde{u}),\varphi \rangle
& = 0 \quad \text{for all } \varphi\in D^{s,p}(\mathbb{R}^N).
\end{align*}
We conclude that $\tilde{u} \in D^{s,p}(\mathbb{R}^N)$
is a nontrivial weak solution to problem~\eqref{1.1}.
\end{proof}

\section{Extremals for the Sobolev inequality}
\label{sec:extremals}
In this section we show the Theorem~\ref{teo2}.
\begin{prop}
The constant $K(\mu,\alpha)$ defined in~\eqref{6} is attained
by a nontrivial function 
$\tilde{u} \in D^{s,p}(\mathbb{R}^N)$
\end{prop}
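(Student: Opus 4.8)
The plan is to obtain the extremal by a direct minimization argument combined with the concentration-compactness principle, exploiting the dilation and translation invariances of the quotient in~\eqref{6}. First I would take a minimizing sequence $\{u_n\}_n\subset D^{s,p}(\mathbb{R}^N)\setminus\{0\}$ for $1/K(\mu,\alpha)$, and normalize it so that $\int_{\mathbb{R}^N}|u_n|^{p_s^*(\alpha)}|x|^{-\alpha}\,dx=1$ for every $n$; then $\|u_n\|^p\to 1/K(\mu,\alpha)$, so by~\eqref{eq4fp} the sequence is bounded in $D^{s,p}(\mathbb{R}^N)$, and after passing to a subsequence $u_n\rightharpoonup \tilde u$ weakly. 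The key point is to rule out vanishing and dichotomy for the associated sequence of measures. Unlike the translation-and-dilation-invariant pure Sobolev case, here the weight $|x|^{-\alpha}$ breaks translation invariance but \emph{not} the dilation invariance $u\mapsto r^{(N-sp)/p}u(r\cdot)$, which fixes the origin; so the natural normalization is a \emph{scaling} normalization. I would fix the dilation parameter $r_n$ by requiring $\int_{B_1(0)}|u_n|^{p_s^*(\alpha)}|x|^{-\alpha}\,dx=\tfrac12$ for all $n$ (possible since the integral over $B_r(0)$ is continuous in $r$, vanishes as $r\to0^+$, and tends to $1$ as $r\to\infty$), and replace $u_n$ by the rescaled function, which leaves all three quantities $\|u_n\|^p$, $\int|u_n|^{p_s^*(\alpha)}|x|^{-\alpha}$, $[u_n]_{s,p}^p$ invariant.

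The second step is to apply the refined concentration-compactness analysis developed in Section~\ref{sec:structureps} to the weakly convergent difference $u_n-\tilde u$. Writing $\mu_n$ and $\nu_n$ for the measures $|\,\cdot\,|^{-\alpha}|u_n|^{p_s^*(\alpha)}$ and the ``gradient'' measure $\int_{\mathbb{R}^N}|u_n(\cdot)-u_n(y)|^p|\cdot-y|^{-N-sp}\,dy\;-\;\mu|u_n|^p|\cdot|^{-sp}$ (compare~\eqref{def.gamma}--\eqref{def.lambda}), the only possible concentration point for the nonlocal Hardy--Sobolev functional is the origin, because away from the origin the embedding $D^{s,p}\hookrightarrow L^{p_s^*(\alpha)}_{\mathrm{loc}}(|x|^{-\alpha})$ is compact (Lemma~\ref{lema.ref.af.3.1} and Remark~\ref{obs1}). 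Thus in the limit $\nu_n\rightharpoonup |\nabla\tilde u|_{HS}+\lambda_0\,\delta_0$ and $\mu_n\rightharpoonup |\,\cdot\,|^{-\alpha}|\tilde u|^{p_s^*(\alpha)}+\nu_0\,\delta_0$ with $\nu_0^{p/p_s^*(\alpha)}\le K(\mu,\alpha)\,\lambda_0$ (the analogue of Lemma~\ref{lem:desig.envolvendo.lambda}), while there is no mass escaping to infinity: an argument like the one around~\eqref{estDs2} with a cut-off supported outside a large ball shows $\int_{|x|>R}|u_n|^{p_s^*(\alpha)}|x|^{-\alpha}\,dx\to\sup$ is controlled and can be made small uniformly in $n$ by choosing $R$ large, using the boundedness in $D^{s,p}$ and the decay of $|x|^{-\alpha}$. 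Then from the subadditivity of $t\mapsto t^{p/p_s^*(\alpha)}$ and the definition of $K(\mu,\alpha)$ applied to $\tilde u$ one obtains
\begin{align*}
\frac{1}{K(\mu,\alpha)}
&=\lim_n\|u_n\|^p
\ge \|\tilde u\|^p+\lambda_0
\ge \frac{1}{K(\mu,\alpha)}\Big(\big(\textstyle\int |\tilde u|^{p_s^*(\alpha)}|x|^{-\alpha}\big)^{p/p_s^*(\alpha)}+\nu_0^{p/p_s^*(\alpha)}\Big) \\
&\ge \frac{1}{K(\mu,\alpha)}\Big(\textstyle\int |\tilde u|^{p_s^*(\alpha)}|x|^{-\alpha}+\nu_0\Big)^{p/p_s^*(\alpha)}
=\frac{1}{K(\mu,\alpha)},
\end{align*}
forcing equality throughout, hence either $\tilde u\equiv0$ and $\nu_0=1$ (pure concentration at the origin), or $\nu_0=0$ and $\tilde u$ is a nontrivial minimizer.

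The third step is to exclude pure concentration at the origin, which is where the scaling normalization pays off: if $\tilde u\equiv0$ and $\mu_n\rightharpoonup\delta_0$, then for every $\rho\in(0,1)$ we would have $\int_{B_\rho(0)}|u_n|^{p_s^*(\alpha)}|x|^{-\alpha}\,dx\to1$, contradicting the normalization $\int_{B_1(0)}|u_n|^{p_s^*(\alpha)}|x|^{-\alpha}\,dx=\tfrac12$. Therefore $\tilde u\not\equiv0$, equality holds in the chain above, and by the strict convexity of $\|\cdot\|^p$ together with the Brezis--Lieb-type splitting one concludes $\lambda_0=\nu_0=0$ and $u_n\to\tilde u$ strongly in $D^{s,p}(\mathbb{R}^N)$; in particular $\tilde u$ realizes $1/K(\mu,\alpha)$, which proves the proposition and hence Theorem~\ref{teo2}.

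I expect the main obstacle to be the no-mass-loss-at-infinity step and the careful bookkeeping in the concentration-compactness splitting for the \emph{nonlocal} energy: because the Gagliardo seminorm is a genuine double integral, localizing it near and away from the origin requires the cut-off estimates already established in Section~\ref{sec:structureps} (the function $D^s\eta$ and inequalities~\eqref{estiI}, \eqref{est.eta.unfinal}), and one must verify that the ``Brezis--Lieb'' decomposition $[u_n]_{s,p}^p=[\tilde u]_{s,p}^p+[u_n-\tilde u]_{s,p}^p+o_n(1)$ and its Hardy-corrected version $\|u_n\|^p=\|\tilde u\|^p+\|u_n-\tilde u\|^p+o_n(1)$ hold for merely weakly convergent bounded sequences — this is where the estimates of Brasco--Mosconi--Squassina and Xiang--Zhang--Zhang cited in the introduction are needed. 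Once those nonlocal Brezis--Lieb identities are in hand, the rest is the standard concentration-compactness dichotomy made rigorous by the scaling normalization.
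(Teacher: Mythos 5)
Your strategy diverges from the paper's in an essential way: the paper does \emph{not} run a Lions-style measure decomposition. Instead it applies Ekeland's variational principle to the constrained minimization, which yields an \emph{almost-critical-point} property $J'(u_n)-\frac{1}{K(\mu,\alpha)}G'(u_n)\to 0$ in $(D^{s,p}(\mathbb{R}^N))'$. This is what makes the localization work: testing with $\eta^p\tilde u_n$ (legitimate by Brasco--Squassina--Yang, Lemma~A.1) gives the identity~\eqref{eta.un.como.fcao.teste}, and a short computation using the half-mass normalization~\eqref{norma.u.tilde.na.bola=1/2} produces the contradiction that rules out $\tilde u\equiv 0$. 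Without Ekeland, one only has a minimizing sequence, and the lemmas of Section~\ref{sec:structureps} that you invoke (Lemma~\ref{lema.ref.af.3.1}, Remark~\ref{obs1}, Lemma~\ref{lem:desig.envolvendo.lambda}) are \emph{not} available to you: their hypotheses are that $\{u_n\}$ is a Palais--Smale sequence for $\Phi$, and every one of their proofs hinges on $\langle\Phi'(u_n),\eta^p u_n\rangle=o_n(1)$. A minimizing sequence for $I_{\mu,\alpha}$ satisfies no such condition a priori, so you would have to re-derive analogues from scratch; the Ekeland step is precisely what saves you from that, which is why the paper uses it. What you and the paper do share is the crucial scaling normalization $\int_{B_1(0)}|\tilde u_n|^{p_s^*(\alpha)}|x|^{-\alpha}\,dx=\tfrac12$ and the final Brezis--Lieb splitting (via Brasco--Squassina--Yang Lemma~2.2), which is a genuine point of contact.

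There is also a concrete gap in your ``no mass loss at infinity'' step, independent of the issue above. You claim that $\int_{|x|>R}|u_n|^{p_s^*(\alpha)}|x|^{-\alpha}\,dx$ can be made small uniformly in $n$ by choosing $R$ large, ``using the boundedness in $D^{s,p}$ and the decay of $|x|^{-\alpha}$.'' This does not follow. Boundedness in $D^{s,p}$ gives boundedness in $L^{p_s^*}(\mathbb{R}^N)$, but the natural H\"older attempt fails: with conjugate exponents $p_s^*/p_s^*(\alpha)$ and its dual $N/\alpha$, the tail weight integral is $\int_{|x|>R}|x|^{-N}\,dx$, which diverges. The argument ``around~\eqref{estDs2}'' is not applicable either, since there the decay came from $D^s\eta^p$ with $\eta$ compactly supported, not from the weight $|x|^{-\alpha}$. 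In fact, with the half-mass normalization one \emph{cannot} exclude a priori a scenario in which, say, half the Hardy--Sobolev mass concentrates at the origin and half escapes to infinity with $\tilde u\equiv 0$; ruling this out requires the strict superadditivity of the energy across the two regimes (the inequality $2\cdot(1/2)^{p/p_s^*(\alpha)}/K>1/K$), a computation your sketch does not carry out. The paper avoids this entirely: because of the almost-critical-point identity, the contradiction with the normalization is obtained \emph{locally} inside $B_1(0)$, and no separate discussion of mass at infinity is needed. You should either supply the superadditivity argument for the at-infinity piece, or (better) adopt the Ekeland route, which is both cleaner and the only way to legitimately borrow the cut-off machinery you want from Section~\ref{sec:structureps}.
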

\begin{proof}
For the case where
$\mu=0$ and $\alpha=0$ we refer the reader to the paper by
Brasco, Mosconi and Squassina~\cite{MR3461371};
and for the case where $\mu=0$ and $\alpha\in (0,sp)$ we refer the reader to the paper by 
Marano and Mosconi~\cite{doi:10.1142/S0219199718500281}. 
Here we consider the case where
$0 < \mu < \mu_H$ and $\alpha\in (0,sp)$. 
Let the functional
$I_{\mu,\alpha} \colon D^{s,p}(\mathbb{R}^N)
\longrightarrow \mathbb{R}$ 
be given by
\begin{align*}
I_{\mu,\alpha}(u)
& \coloneqq 
\frac{\Vert u\Vert^p}
{\left( \displaystyle\int_{\mathbb{R}^N}
\frac{|u|^{p_s^*(\alpha)}}{|x|^{\alpha}} 
\,dx \right)^{\frac{p}{p_s^*(\alpha)}}}.
\end{align*}
Notice that 
\begin{align*}
\int_{\mathbb{R}^N}
\frac{|u|^{p_s^*(\alpha)}}{|x|^{\alpha}} \,dx
& = \int_{\mathbb{R}^N}
    \frac{|u|^{p_s^*(\alpha)\varepsilon}
         |u|^{p_s^*(\alpha)(1-\varepsilon)}}
         {|x|^{\alpha}} \,dx
    \quad \text{for all } \varepsilon\in\mathbb{R}.
\end{align*}

Now we choose 
$\varepsilon 
= \frac{p_s^*\left(p_s^*(\alpha)-p\right)}
       {p_s^*(\alpha)(p_s^*-p)}
= \frac{N(sp-\alpha)}{s(N-\alpha)}$;
thus, 
$1-\varepsilon=\frac{p(p_s^*-p_s^*(\alpha))}{p_s^*(\alpha)(p_s^*-p)}=\frac{\alpha(N-sp)}{sp(N-\alpha)}>0$. 
Using H\"{o}lder inequality with exponents 
$r=\frac{sp}{sp-\alpha}$ and 
$r'=\frac{sp}{\alpha}$, it follows that
\begin{align*}
\int_{\mathbb{R}^N}
\frac{|u|^{p_s^*(\alpha)}}{|x|^{\alpha}} \,dx 
& = \int_{\mathbb{R}^N}
    \left( |u|^{\frac{N(sp-\alpha)}{s(N-ps)}} \right)    
    \left( \frac{|u|^{\frac{\alpha}{s}}}
                {|x|^{\alpha}}
    \right) \,dx \\
& \leqslant 
  \left( \int_{\mathbb{R}^N}|u|^{p_s^*} \,dx
  \right)^{\frac{sp-\alpha}{sp}}
  \left(\int_{\mathbb{R}^N}\frac{|u|^p}{|x|^{ps}} \,dx
  \right)^{\frac{\alpha}{ps}}.
\end{align*} 
Moreover, by the Hardy-Sobolev inequality, we have
\begin{align*}
\int_{\mathbb{R}^N}
\frac{|u|^{p_s^*(\alpha)}}{|x|^{\alpha}} \,dx 
& \leqslant \Vert u\Vert_{p_s^*}^{p_s^*\frac{ps-\alpha}{ps}}
\left( \int_{\mathbb{R}^N}\frac{|u|^p}{|x|^{ps}} \,dx\right)^{\frac{\alpha}{ps}}\\
& \leqslant
\left( \frac{[u]_{s,p}}
            {S_p^p}\right)^{\frac{N(ps-\alpha)}{s(N-ps)}}
\left( \frac{[u]^p_{s,p}}
            {\mu_H}\right)^{\frac{\alpha}{ps}} \\
& = \left( \frac{1}{\mu_H} \right)^{\frac{\alpha}{ps}}
    \left( \frac{1}{S_p^p}
    \right)^{\frac{N(ps-\alpha)}{s(N-sp)}}[u]_{s,p}^{\frac{N(ps-\alpha)}{s(N-sp)}
    + \frac{\alpha}{s}} \\
& = C(N,p,\mu,s,\alpha) [u]_{s,p}^{\frac{p(N-\alpha)}{N-ps}}
\end{align*}
where $S_p$ is the best constant of the embedding 
$D^{s,p}(\mathbb{R}^N) 
\hookrightarrow L^{p_s^*}(\mathbb{R}^N)$ 
and $\mu_H$ is defined in~\eqref{muH}. 
These inequalities imply that
\begin{align*}
\left(\int_{\mathbb{R}^N}\frac{|u|^{p_s^*(\alpha)}}{|x|^{\alpha}}dx\right)^{\frac{p}{p_s^*}}
& \leqslant C(N,p,\mu,s,\alpha)^{\frac{p}{p_s^*}}[u]_{s,p}^p. 
\end{align*}
We conclude that there exists a positive constant,
still denoted by $C= C(N,p,\mu,s,\alpha)$, such that 
\begin{align*}
\frac{1}{C}
& \leqslant
\dfrac{ [u]_{s,p}^p}
{\left(\displaystyle\int_{\mathbb{R}^N}
 \frac{|u|^{p_s^*(\alpha)}}
 {|x|^{\alpha}} \,dx \right)^{\frac{p}{p_s^*(\alpha)}}}
 \quad \text{for all } u \in D^{s,p}(\mathbb{R}^N)\setminus\{0\}.
\end{align*}

By the pair of inequalities~\eqref{eq4fp} we infer that the functional $I_{\mu,\alpha}$ is well defined and is bounded from below by a positive constant; 
so, $\inf I_{\mu,\alpha} > 0$. 
Note that 
$I_{\mu,\alpha} \in C^1(D^{s,p}(\mathbb{R}^N),\mathbb{R})$
and we can apply Ekeland variational principle~\cite{MR0346619} to the functional 
$I_{\mu,\alpha}(u)$ 
to guarantee the existence of a minimizing sequence
$\{u_n\}_n\subset D^{s,p}(\mathbb{R}^N)$ with the additional properties
\begin{align*}
\int_{\mathbb{R}^N}
  \frac{|u_n|^{p_s^*(\alpha)}}
       {|x|^{\alpha}} \,dx
& = 1 \quad \text{for all } n\in\mathbb{N}
\end{align*}
and also
\begin{align*}
I_{\mu,\alpha}(u_n) 
& \rightarrow 
\inf_{\substack{u \in D^{s,p}(\mathbb{R}^N) \\ u \neq 0}}
I_{\mu,\alpha}(u) \coloneqq \frac{1}{K(\mu,\alpha)}
\quad \textup{as } n \to +\infty; \\ 
I_{\mu,\alpha}'(u_n)
& \rightarrow 0\quad \text{in } (D^{s,p}(\mathbb{R}^N))'.
\end{align*}
Now we consider two auxiliary functionals
$J,G \colon D^{s,p}(\mathbb{R}^N) \longrightarrow \mathbb{R}$ defined by
\begin{alignat*}{2}
J(u) 
& \coloneqq \frac{\Vert u\Vert^p}{p}
& \quad \text{and}\quad 
G(u) 
&\coloneqq \frac{1}{p_s^*(\alpha)}
  \int_{\mathbb{R}^N}
  \frac{|u|^{p_s^*(\alpha)}}{|x|^{\alpha}} \,dx.
\end{alignat*}
\begin{claim} The following statements are valid.
\begin{enumerate}
\item $ J(u_n) \rightarrow \dfrac{1}{pK(\mu,\alpha)}
\quad \text{as } n \rightarrow +\infty $.
\item $ J'(u_n) - \dfrac{1}{K(\mu,\alpha)}G'(u_n)
 \rightarrow 0$
in $(D^{s,p}(\mathbb{R}^N))'$
as $n\rightarrow+\infty$.
\end{enumerate}
\end{claim}

The proof of the first item is imediate from the definitions involved. To prove the second item we consider
$v\in D^{s,p}(\mathbb{R}^N)$ and note that
\begin{align*}
J'(u_n)v 
& = \int_{\mathbb{R}^N}\int_{\mathbb{R}^N}
    \frac{J_p(u_n(x)-u_n(y))(v(x)-v(y))}
         {|x-y|^{N+sp}}dx\,dy \\
& \qquad  - \mu \int_{\mathbb{R}^N}\frac{J_p(u_n)v}{|x|^{ps}}dx,
\end{align*}
and
\begin{align*}
G'(u_n)v
& = \int_{\mathbb{R}^N}
    \frac{|u_n|^{p_s^*(\alpha)-2}u_nv}
         {|x|^{\alpha}} \,dx.
\end{align*}
Thus,
\begin{align*}
J'(u_n)v &- \frac{1}{K(\mu,\alpha)}\,G'(u_n)v
 = \int_{\mathbb{R}^N}\int_{\mathbb{R}^N}
    \frac{J_p(u_n(x)-u_n(y))(v(x)-v(y))}
         {|x-y|^{N+sp}} \,dx\,dy \\
& \qquad   - \mu \int_{\mathbb{R}^N}
    \frac{J_p(u_n)v}{|x|^{ps}} \,dx - \frac{1}{K(\mu,\alpha)}
    \int_{\mathbb{R}^N}
    \frac{J_{p_s^*(\alpha)}(u_n)v}{|x|^{\alpha}} \,dx.
\end{align*}
We remark that
\begin{align*}
I_{\mu,\alpha}'(u_n)v =\frac{p(J'(u_n)v)}{\left( \displaystyle \int_{\mathbb{R}^N}
    \frac{|u_n|^{p_s^*(\alpha)}}{|x|^{\alpha}} \,dx
    \right)^{\frac{p}{p_s^*(\alpha)}}}
    -\frac{p \Vert u_n\Vert^{p}G'(u_n)v}
    {\left(\displaystyle
     \int_{\mathbb{R}^N}
     \frac{|u_n|^{p_s^*(\alpha)}}{|x|^{\alpha}} \,dx
    \right)^{\frac{p}{p_s^*(\alpha)}+1}}.
\end{align*}
Since $\Vert u_n\Vert^p\rightarrow \frac{1}{K(\mu,\alpha)}$ as $n\rightarrow+\infty$ and 
$\displaystyle\int_{\mathbb{R}^N}
\frac{|u_n|^{p_s^*(\alpha)}}{|x|^{\alpha}}dx
= 1$ for all $n\in\mathbb{N}$,
we obtain
\begin{align*}
I_{\mu,\alpha}'(u_n)v
& = p \left( J'(u_n)v 
   - \left(\frac{1}{K(\mu,\alpha)} + o_n(1)\right)
   G'(u_n)v\right).
\end{align*}
And by the fact that 
$I_{\mu,\alpha}'(u_n)v \rightarrow 0$ 
as $n\rightarrow+\infty$ for an arbitrary function
$v\in D^{s,p}(\mathbb{R}^N)$, 
we deduce that
\begin{align*}
J'(u_n)v 
- \frac{1}{K(\mu,\alpha)}G'(u_n)v
& \rightarrow 0
\quad \text{for all } v \in D^{s,p}(\mathbb{R}^N)
\textup{ as } n\rightarrow+\infty.
\end{align*}
Therefore, 
\begin{align}
J'(u_n) - \frac{1}{K(\mu,\alpha)}G'(u_n)
& \rightarrow 0 
\quad \textup{in } (D^{s,p}(\mathbb{R}^N))'
 \textup{ as } n\rightarrow+\infty.\label{eq.para.tomar.funcao.teste}
\end{align}
This concludes the proof of the claim.

Now we define the Levy concentration function
$Q \colon \mathbb{R}_+ \to \mathbb{R}$ 
associated to  
$\frac{|u_n|^{p_s^*(\alpha)}}{|x|^\alpha}$ by
\begin{align*}
Q(r) 
& \coloneqq \int_{B_r(0)}
  \frac{|u_n|^{p_s^*(\alpha)}}{|x|^\alpha} \,dx.
\end{align*}

Using the continuity of the function $Q$ and since $\int_{\mathbb{R}^N}\frac{|u_n|^{p_s^*(\alpha)}}{|x|^\alpha}dx=1$,
passing to a subsequence if necessary, still denoted in the same way, for every natural number $n\in\mathbb{N}$
there exists a positive real number $r_n>0$ such that 
\begin{align*}
Q(r_n) & = \int_{B_{r_n}(0)}\frac{|u_n|^{p_s^*(\alpha)}}{|x|^\alpha}dx=\frac{1}{2}
\quad \text{for all } n\in\mathbb{N}.
\end{align*}

As we have already done, we consider the sequence
$\{\tilde{u}_n\}_n\subset D^{s,p}(\mathbb{R}^N)$ 
defined in~\eqref{def.u.tilde}. We note that 
$I_{\mu,\alpha}(\tilde{u}_n) = I_{\mu,\alpha}(u_n)$ 
because the several integrals present in the functional $I_{\mu,\alpha}$ are invariant under the conformal transformations defined in~\eqref{def.u.tilde}. 
In this way,
$\{\tilde{u}_n\}_n \subset D^{s,p}(\mathbb{R}^N)$ is also 
a minimizing sequence for the functional $I_{\mu,\alpha}$;
moreover,
\begin{align}
\int_{B_1(0)}\frac{|\tilde{u}_n|^{p_s^*(\alpha)}}
{|x|^\alpha}dx
& = \frac{1}{2} \quad \text{for all } n\in\mathbb{N}.\label{norma.u.tilde.na.bola=1/2}
\end{align} 
We also note that 
$\Vert \tilde{u}_n\Vert^p=\frac{1}{K(\mu,\alpha)}+o_n(1)$;
so, by inequalities~\eqref{eq4fp} it follows that the sequence $\{\tilde{u}_n\}_n \subset D^{s,p}(\mathbb{R}^N)$ is bounded. We deduce that, up to a passage to a subsequence, there exists a function 
$\tilde{u}\in D^{s,p}(\mathbb{R}^N)$ such that,
as $n\rightarrow+\infty$ we have
\begin{align*}
\tilde{u}_n 
& \rightharpoonup \tilde{u} 
\quad \text{weakly in } D^{s,p}(\mathbb{R}^N) \\
\tilde{u}_n
& \rightarrow \tilde{u}
\quad \text{strongly in }L^q_{\operatorname{loc}}(\mathbb{R}^N),
\quad \text{for all } q\in [1,p_s^*).
\end{align*}
 
Our goals now are to prove that 
$\tilde{u}\not\equiv 0$ and that 
$\tilde{u}$ is a minimizer for the functional 
$I_{\mu,\alpha}$. To accomplish the first goal we argue by contradiction and suppose that 
$\tilde{u}\equiv 0$. So, we have
\begin{align}
\tilde{u}_n 
& \rightharpoonup 0 
\quad \text{weakly in } D^{s,p}(\mathbb{R}^N) \\
\tilde{u}_n
& \rightarrow 0
\quad \text{strongly in }L^q_{\operatorname{loc}}(\mathbb{R}^N)
\text{ for all } q\in [1,p_s^*).
\end{align}
  
Now we set $0 < \delta < 1$ and define 
$B_\delta(0) \coloneqq 
\{x\in\mathbb{R}^N;|x|\leqslant\delta<1\}$. 
We also consider a cut-off function 
$\eta\in C^{\infty}_0(\mathbb{R}^N)$ 
such that 
\begin{alignat}{3}
& \eta \equiv 1 
& \quad\text{in } B_\delta(0);
\qquad & \eta \equiv 0 
  \quad\text{in } \mathbb{R}^N\setminus B_1(0);
\qquad & 0 \leqslant\eta\leqslant 1 
& \quad\text{in } \mathbb{R}^N.
\end{alignat}

Using a result in 
Brasco, Squassina and Yang~\cite[Lemma A.1]{MR3732174},
we can also consider 
$\eta^p\tilde{u}_n$ as test function 
in~\eqref{eq.para.tomar.funcao.teste};
so,
\begin{align}
&\int_{\mathbb{R}^N}\int_{\mathbb{R}^N}
\frac{J_p(\tilde{u}_n(x)-\tilde{u}_n(y))
(\eta^p\tilde{u}_n(x)-\eta^p\tilde{u}_n(y))}{|x-y|^{N+sp}}
\,dx\,dy
-\mu \int_{\mathbb{R}^N}
\frac{|\eta\tilde{u}_n|^p}{|x|^{ps}}\,dx \nonumber\\
&\qquad\qquad\qquad = \frac{1}{K(\mu,\alpha)}
\int_{\mathbb{R}^N}
\frac{|\tilde{u}_n|^{p_s^*(\alpha)}\eta^p}
     {|x|^{\alpha}} \,dx + o_n(1).   \label{eta.un.como.fcao.teste} 
\end{align}

As we have already seen, 
if $\tilde{u}_n\rightharpoonup 0$ weakly in 
$D^{s,p}(\mathbb{R}^N)$ 
and $\eta\in C^{\infty}_0(\mathbb{R}^N)$,
then
\begin{align*}
\int_{\mathbb{R}^{N}}\int_{\mathbb{R}^{N}}
\frac{\eta^p(x)|\tilde{u}_n(x)-\tilde{u}_n(y)|^p}
     {|x-y|^{N+sp}} \,dx\,dy
& =[\eta\tilde{u}_n]_{s,p}^p+o_n(1) 
\quad\text{as } n \rightarrow +\infty\\
\shortintertext{and}
\int_{\mathbb{R}^{N}}\int_{\mathbb{R}^{N}}
\frac{|\tilde{u}_n(y)||\eta^p(x)-\eta^p(y)|^p}
     {|x-y|^{N+sp}} \,dx\,dy 
& \rightarrow 0
\quad \text{as } n\rightarrow +\infty.
\end{align*}
Both these estimates together with the 
estimate~\eqref{eta.un.como.fcao.teste} and 
H\"{o}lder inequality with exponents
$p_s^*(\alpha)$
and
$p_s^*(\alpha)/(p_s^*(\alpha)-p)$
allow us to deduce that
\begin{align}
o_n(1)&+\Vert\eta\tilde{u}_n\Vert^p
 = \frac{1}{K(\mu,\alpha)}
    \int_{\mathbb{R}^N}
    \frac{|\tilde{u}_n|^{p_s^*(\alpha)}\eta^p}
         {|x|^\alpha} \,dx 
    \nonumber \\
& \leqslant \frac{1}{K(\mu,\alpha)}
  \int_{B_1(0)}
  \frac{|\tilde{u}_n|^{p_s^*(\alpha)}}
       {|x|^\alpha} \,dx
   \nonumber \\
& = \frac{1}{K(\mu,\alpha)}
    \left( \int_{B_1(0)}
           \frac{|\tilde{u}_n|^{p_s^*(\alpha)}}
                {|x|^\alpha} \,dx 
    \right)^{\frac{p}{p_s^*(\alpha)}}
    \left( \int_{B_1(0)}
           \frac{|\tilde{u}_n|^{p_s^*(\alpha)}}
                {|x|^\alpha} \,dx
    \right)^{\frac{ps-\alpha}{N-\alpha}}
    \nonumber \\
& \leqslant 
\frac{1}{K(\mu,\alpha)}
\left(\frac{1}{2}\right)^{\frac{ps-\alpha}{N-\alpha}}
\left( \int_{B_1(0)}
       \frac{|\tilde{u}_n|^{p_s^*(\alpha)}}
            {|x|^\alpha} \,dx 
\right)^{\frac{p}{p_s^*(\alpha)}}.\label{est.norma.eta.u.tilde}
\end{align}
where in the last passage we used equality~\eqref{norma.u.tilde.na.bola=1/2}.

We note that 
\begin{align*}
\Vert\tilde{u}_n\Vert_{L^{p_s^*(\alpha)}(B_1(0),|x|^{-\alpha})}
& = \Vert\eta\tilde{u}_n+(1-\eta)\tilde{u}_n\Vert_{L^{p_s^*(\alpha)}(B_1(0),|x|^{-\alpha})}\\
&\leqslant \Vert\eta\tilde{u}_n\Vert_{L^{p_s^*(\alpha)}(B_1(0),|x|^{-\alpha})}\\
&\qquad+\Vert(1-\eta)\tilde{u}_n\Vert_{L^{p_s^*(\alpha)}(B_1(0),|x|^{-\alpha})};
\end{align*}
and since
$\eta\equiv 1$ in $B_\delta(0)$, it follows that
\begin{align*}
\Vert(1-\eta)\tilde{u}_n\Vert_{L^{p_s^*(\alpha)}(B_1(0),|x|^{-\alpha})}^{p_s^*(\alpha)}
& \leqslant C\int_{B_1(0)\setminus B_\delta(0)}
|\tilde{u}_n|^{p_s^*(\alpha)}dx.
\end{align*}
We also have 
$\tilde{u}_n\rightarrow 0$ strongly in 
$L^q_{loc}(\mathbb{R}^N)$ for $q\in [1,p_s^*)$;
and since $p_s^*(\alpha)<p_s^*$, it follows that
\begin{align*}
\Vert(1-\eta)\tilde{u}_n\Vert_{L^{p_s^*(\alpha)}(B_1(0),|x|^{-\alpha})}^{p_s^*(\alpha)}
& = o_n(1).
\end{align*}
Consequently,
\begin{align}
\Vert\tilde{u}_n\Vert_{L^{p_s^*(\alpha)}(B_1(0),|x|^{-\alpha})}
& = \left( \int_{B_1(0)}
       \frac{|\tilde{u}_n|^{p_s^*(\alpha)}}
            {|x|^\alpha} \,dx
\right)^{\frac{p}{p_s^*(\alpha)}}\nonumber\\
& \leqslant
\left( \int_{\mathbb{R}^N}
       \frac{|\eta\tilde{u}_n|^{p_s^*(\alpha)}}
            {|x|^\alpha} \,dx
\right)^{\frac{p}{p_s^*(\alpha)}}
+ o_n(1).\label{est.superior.u.tilde.eta.u.tilde}
\end{align} 
Substituting inequality~\eqref{est.superior.u.tilde.eta.u.tilde} 
into the estimate~\eqref{est.norma.eta.u.tilde}
yields 
\begin{align}
\Vert\eta\tilde{u}_n\Vert^p
& \leqslant
  \frac{1}{K(\mu,\alpha)}
  \left( \frac{1}{2} \right)^{\frac{ps-\alpha}{N-\alpha}}
  \left( \int_{\mathbb{R}^N}
         \frac{|\eta\tilde{u}_n|^{p_s^*(\alpha)}}
              {|x|^\alpha} \,dx
  \right)^{\frac{p}{p_s^*(\alpha)}}
  + o_n(1).\label{est.superior.eta.un}
\end{align}
On the other hand, by the definition~\eqref{6} of the constant $K(\mu,\alpha)$, we obtain
\begin{align}
\frac{1}{K(\mu,\alpha)}
\left( \int_{\mathbb{R}^N}
       \frac{|\eta\tilde{u}_n|^{p_s^*(\alpha)}}
            {|x|^\alpha} \,dx
\right)^{\frac{p}{p_s^*(\alpha)}}
& \leqslant \Vert\eta\tilde{u}_n\Vert^p.\label{est.inferior.eta.un}
\end{align}

Combining 
inequalities~\eqref{est.superior.eta.un} 
and~\eqref{est.inferior.eta.un} we arrive at
\begin{align*}
\frac{1}{K(\mu,\alpha)}
\left( 1- \left(\frac{1}{2}\right)^{\frac{ps-\alpha}{N-\alpha}}\right)
\left(\int_{\mathbb{R}^N}\frac{|\eta\tilde{u}_n|^{p_s^*(\alpha)}}{|x|^\alpha}dx\right)^{\frac{p}{p_s^*(\alpha)}}
& \leqslant o_n(1).
\end{align*}

Using the assumptions $\alpha\in (0,sp)$ and $N>sp$
it follows from the previous inequality that
\begin{align*}
\int_{\mathbb{R}^N}\frac{| \eta \tilde{u}_n|^{p_s^*(\alpha)}}{|x|^\alpha}dx
& \rightarrow 0.
\end{align*}
So,this inequality together with estimate~\eqref{est.superior.u.tilde.eta.u.tilde} guarantees that
\begin{align*}
\int_{B_1(0)}
\frac{|\tilde{u}_n|^{p_s^*(\alpha)}}
     {|x|^\alpha} \,dx
& \rightarrow 0 \quad \text{as } n\rightarrow+\infty.
\end{align*}
But this is a contradiction with 
equality~\eqref{norma.u.tilde.na.bola=1/2}. 
As a result, we deduce that $\tilde{u}\not\equiv 0$. 

It remains to show that the weak limit
$\tilde{u}\in D^{s,p}(\mathbb{R}^N)$ is in fact a minimizer to $\frac{1}{K(\mu,\alpha)}$ and that 
$\int_{\mathbb{R}^N}\frac{|\tilde{u}|^{p_s^*(\alpha)}}{|x|^\alpha}dx=1$. 
To do this, for every natural number 
$n\in\mathbb{N}$ we define
$\theta_n \coloneqq \tilde{u}_n-\tilde{u}$. 
Hence, applying Brezis-Lieb lemma~\cite[Theorem~1.1]{MR699419} we obtain
\begin{align*}
1 
& = \int_{\mathbb{R}^N}
    \frac{|\tilde{u}_n|^{p_s^*(\alpha)}}
         {|x|^\alpha} \,dx
  = \int_{\mathbb{R}^N}
    \frac{|\tilde{u}|^{p_s^*(\alpha)}}
         {|x|^\alpha} \,dx
  + \int_{\mathbb{R}^N}
    \frac{|\theta_n|^{p_s^*(\alpha)}}
         {|x|^\alpha} \,dx 
  + o_n(1).
\end{align*}
From this estimate we get the inequalities
\begin{alignat*}{2}
0 
& \leqslant 
  \int_{\mathbb{R}^N}
  \frac{|\tilde{u}|^{p_s^*(\alpha)}}
       {|x|^\alpha} \,dx
  \leqslant 1
& \quad\text{and}\quad 
0
& \leqslant
  \int_{\mathbb{R}^N}
  \frac{|\theta_n|^{p_s^*(\alpha)}}
       {|x|^\alpha} \,dx
  \leqslant 1.
\end{alignat*}
Using the fact that 
$\theta_n\rightharpoonup 0$ 
weakly in $D^{s,p}(\mathbb{R}^N)$ 
as $n\rightarrow+\infty$, aplying a result by 
Brasco, Squassina 
and Yang~\cite[Lemma 2.2]{MR3732174}, 
we infer that
\begin{align*}
\Vert\tilde{u}_n\Vert^p
& =\Vert\tilde{u}\Vert^p+\Vert\theta_n\Vert^p+o_n(1).
\end{align*}
In this way, using
estimate~\eqref{eq.para.tomar.funcao.teste}
and the definition~\eqref{6} of the constant 
$K(\mu,\alpha)$ we get
\begin{align*}
o_n(1)
& = \Vert\tilde{u}_n\Vert^p
  - \frac{1}{K(\mu,\alpha)}
    \int_{\mathbb{R}^N}
    \frac{|\tilde{u}_n|^{p_s^*(\alpha)}}
         {|x|^\alpha} \,dx \\
& = \left(\Vert\tilde{u}\Vert^p
  - \frac{1}{K(\mu,\alpha)}
    \int_{\mathbb{R}^N}
    \frac{|\tilde{u}|^{p_s^*(\alpha)}}
         {|x|^\alpha} \,dx
    \right) \\
& \qquad 
  + \left( \Vert\theta_n\Vert^p
          -\frac{1}{K(\mu,\alpha)}
          \int_{\mathbb{R}^N}
          \frac{|\theta_n|^{p_s^*(\alpha)}}
          {|x|^\alpha} \,dx
    \right)
  + o_n(1)\\
& \geqslant
  \frac{1}{K(\mu,\alpha)}
  \left\{
      \left( \int_{\mathbb{R}^N}
             \frac{|\tilde{u}|^{p_s^*(\alpha)}}
                  {|x|^\alpha} \,dx
      \right)^{\frac{p}{p_s^*(\alpha)}}
    - \int_{\mathbb{R}^N}
      \frac{|\tilde{u}|^{p_s^*(\alpha)}}
           {|x|^\alpha} \,dx 
  \right\}\\
& \quad 
  + \frac{1}{K(\mu,\alpha)}
    \left\{
       \left( \int_{\mathbb{R}^N}
              \frac{|\theta_n|^{p_s^*(\alpha)}}
                   {|x|^\alpha} \,dx
       \right)^{\frac{p}{p_s^*(\alpha)}}
     - \int_{\mathbb{R}^N}
       \frac{|\theta_n|^{p_s^*(\alpha)}}
            {|x|^\alpha} \,dx
   \right\} + o_n(1)\\
& \eqqcolon \frac{1}{K(\mu,\alpha)}\, A 
+ \frac{1}{K(\mu,\alpha)}\, B + o_n(1).
\end{align*}
Clearly, $A+B=o_n(1)$;
and using the fact that $\frac{p}{p_s^*(\alpha)}\in (0,1)$
and that both integrals
$\int_{\mathbb{R}^N}
\frac{|\tilde{u}|^{p_s^*(\alpha)}}{|x|^\alpha}\,dx$
and
$\int_{\mathbb{R}^N}
\frac{|\theta_n|^{p_s^*(\alpha)}}{|x|^\alpha}\,dx$
take their values in the closed interval $[0,1]$,
we deduce that $A,B\geqslant 0$ and that
$B=-A+o_n(1)\geqslant 0$; 
this means that $A=0$ and that $B=o_n(1)$, that is,
\begin{align*}
\left(\int_{\mathbb{R}^N}
\frac{|\tilde{u}|^{p_s^*(\alpha)}}
     {|x|^\alpha} \,dx
\right)^{\frac{p}{p_s^*(\alpha)}}
& = \int_{\mathbb{R}^N}
    \frac{|\tilde{u}|^{p_s^*(\alpha)}}{|x|^\alpha} \,dx. 
\end{align*}
We have already seen that $\tilde{u}\not\equiv 0$;
so, the previous equality implies that
\begin{align*}
\int_{\mathbb{R}^N}\frac{|\tilde{u}|^{p_s^*(\alpha)}}{|x|^\alpha}dx & = 1.
\end{align*}

Using again the
estimate~\eqref{eq.para.tomar.funcao.teste}
and the fact that 
$\tilde{u}_n\rightharpoonup\tilde{u}$ 
weakly in $D^{s,p}(\mathbb{R}^N)$ 
as $n\rightarrow+\infty$, it follows that,
\begin{align*}
J'(\tilde{u}_n)\tilde{u}
- \frac{1}{K(\mu,\alpha)}\, G'(\tilde{u}_n)\tilde{u}
& \rightarrow J'(\tilde{u})\tilde{u}
- \frac{1}{K(\mu,\alpha)}\, G'(\tilde{u})\tilde{u}
= 0
\end{align*}
as $n\rightarrow +\infty$.

Finally, we conclude that
\begin{align*}
\Vert\tilde{u}\Vert^p
& = \frac{1}{K(\mu,\alpha)}
\int_{\mathbb{R}^N}
\frac{|\tilde{u}|^{p_s^*(\alpha)}}
     {|x|^\alpha} \,dx
= \frac{1}{K(\mu,\alpha)},
\end{align*}
that is, the best constant $\frac{1}{K(\mu,\alpha)}$
is attained by a nontrivial function  
$\tilde{u} \in D^{s,p}(\mathbb{R}^{N})$.
This concludes the proof of the proposition.
\end{proof}

\begin{rem}
If $\mu \leqslant 0$, then $1/K(\mu,0)=1/K(0,0)$. Therefore there is no extremal for $1/K(\mu,\alpha)$ when $\mu\leqslant 0$.
\end{rem}
As we have $\mu \leqslant 0$, clearly 
$1/K(\mu,0) \geqslant 1/K(0,0)$. We consider a function $w\in D^{1,2}(\mathbb{R}^N)\setminus \{0\}$ for which $1/K(0,0)$ is attained. For the existence of such a function we refer Brasco, Mosconi and Squassina~\cite{MR3461371}. Now for $\delta\in \mathbb{R}$ and $\overline{x}\in\mathbb{R}^N$ we define the function $w_\delta=w(x-\delta\overline{x})$ for $x\in\mathbb{R}^N$. Then, by changing variables we get
\begin{align*}
\frac{1}{K(\mu,0)}\leqslant I_\delta=\dfrac{\Vert w_\delta \Vert^2}{\left(\displaystyle\int_{\mathbb{R}^N}\vert w_\delta\vert^{p^*_s}\,dx\right)^{\frac{p}{p^*_s}}}= \dfrac{\Vert w \Vert^2}{\left(\displaystyle\int_{\mathbb{R}^N}\vert w \vert^{p^*_s}\,dx\right)^{\frac{p}{p^*_s}}};
\end{align*}
therefore,
\begin{align*}
\frac{1}{K(\mu,0)}\leqslant \lim_{\delta\rightarrow+\infty}I_\delta= \dfrac{\Vert w \Vert^2}{\left(\displaystyle\int_{\mathbb{R}^N}\vert w \vert^{p^*_s}\,dx\right)^{\frac{p}{p^*_s}}}=\frac{1}{K(0,0)}.
\end{align*}
So, 
\begin{align*}
\frac{1}{K(\mu,0)}=\frac{1}{K(0,0)}.
\end{align*}
We conclude that there is no function that attained the constant $1/K(\mu,0)$ when $\mu<0$. 

\subsection*{Acknowledgement}
The authors would like to express their very great appreciation to Dr. Patrizia Pucci and Dr.~Shaya Shakerian for their valuable and constructive suggestions during this research work.

\bibliographystyle{siam}
\bibliography{biblio_jef}

\end{document}